\newtheorem{theorem}{Theorem}[section]
\newtheorem{lemma}[theorem]{Lemma}
\theoremstyle{definition}
\newtheorem{definition}[theorem]{Definition}
\newtheorem{assumption}{Assumption}
\theoremstyle{remark}
\newtheorem{remark}[theorem]{Remark}
\numberwithin{equation}{section}
\def\bz{\bm{\zeta}}
\def\bn{\mathbf{n}}
\def\O{\Omega}
\def\Tb{{\bf T}}
\def\cB{\mathcal{B}}
\def\R{\mathcal{R}}
\def\S{\mathcal{S}}
\def\C{\mathscr{C}}
\def\M{\bf{M}}
\def\A{{\bf A}}
\def\tp{\widehat{p}}
\def\ty{\widehat{y}}
\def\tpb{\bf{p}}
\def\tqb{\bf{q}}
\def\H2{{H^2(\O)}}
\def\T{\mathcal{T}}
\def\B{\mathcal{B}}
\def\A{\bf{A}}
\def\l{\langle}
\def\r{\rangle}
\def\g{\bf g}
\def\u{\bf u}
\def\L{{\bf L}}
\def\w{{\bf w}}
\def\W{{\bf W}}
\def\S{{\bf S}}
\def\wS{\widehat{\bf S}}
\def\g{{\bf g}}
\def\U{{\bf U}}
\def\R{{\bf R}}
\def\L{{\bf L}}
\def\D{{\bf D}} 
\def\B{{\bf B}}
\def\E{{\bf E}}
\def\C{\bm{\mathcal C}}
\def\v{{\bf v}}
\def\z{{\bf z}}
\def\s{{\bm s}}
\def\a{{\bm a}}
\def\b{{\bf b}}
\def\H{{\mathcal H}}
\def\bq{\mathbf{q}}
\def\bp{\mathbf{p}}
\def\lambdae{\bm\lambda}
\def\Lambdae{\bm\Lambda}
\def\v{\bf v}
\def\Aad{{\bf A}^{\bf ad}}
\def\Aai{{{\bf A}^{(i)}_{\bf a}}}
\def\Aa{{{\bf A}_{\bf a}}}
\def\Aad{{\bf A}^{\bf ad}}
\def\Aaoi{{{\bf A}^{(i)}_{\bf a}}}
\def\Ma{{{\bf M}_{\bf a}}}
\def\La{{{\bf L}_{\bf a}}}
\def\ad{{\bf ad}}
\def\vpg{{\bf\varphi}_{\bf g}}
\def\vpgt{\widetilde{\bf\varphi}_{\bf g}}
\def\vps{{\bf\varphi}_{\bf s}}
\def\vpst{\widetilde{\bf\varphi}_{\bf s}}
\def\Z{{\bf Z}}
\def\xg{{\bm \xi}_{\bf g}}
\def\z{{\bf z}}
\def\Aadt{{A}^{ad}}
\def\LO2{{L^2(\Omega)}}
\def\G{{\bf G}}
\DeclareMathOperator*{\argmin}{argmin}
\begin{document}

\title{Convergence analysis of a balancing domain decomposition method for an elliptic  optimal control problem with HDG discretizations}

\author{Sijing Liu and Jinjin Zhang}
\address{Sijing Liu, Department of Mathematical Sciences\\
Worcester Polytechnic Institute\\
Worcester, MA\\
USA}
\email{sliu13@wpi.edu}

\address{Jinjin Zhang, Department of Mathematics\\
The Ohio State University\\
Columbus, OH\\
USA}
\email{zhang.14647@osu.edu}

\subjclass{49J20, 49M41, 65N30, 65N55}
\keywords{elliptic distributed optimal control problems, hybridizable discontinuous Galerkin methods, BDDC algorithms}
\date{\today}

\begin{abstract}
In this work, a balancing domain decomposition by constraints (BDDC) algorithm is applied to the nonsymmetric positive definite linear system arising from the hybridizable discontinuous Galerkin (HDG) discretization of an elliptic distributed optimal control problem. Convergence analysis for the BDDC preconditioned generalized minimal residual (GMRES) solver demonstrates that, when the subdomain size is small enough, the algorithm is robust with respect to the regularization parameter, and the number of iterations is independent of the number of subdomains and depends only slightly on the subdomain problem
size. Numerical experiments are performed to confirm the theoretical results.
\end{abstract}

\maketitle

\section{Introduction}

Elliptic distributed optimal control problems have been extensively studied using various discretization techniques. Standard $P_1$ finite element methods are applied to an elliptic optimal control problem constrained by a diffusion-reaction equation in \cite{Schoberl:2011:RobustMG} and to one constrained by a advection–diffusion–reaction equation in \cite{brenner2020multigrid}. Discontinuous Galerkin (DG) methods have been explored in~\cite{leykekhman2012investigation,leykekhman2012local,liu2025robust,liu2024discontinuous} for their applicability to optimal control problems as well. Moreover, HDG methods are investigated in~\cite{chen2018hdg,chen2019hdg,HuShenSinglerZhangZheng2018} and the error estimates in $L^2$ norm are provided. Streamline upwind/Petrov-Galerkin (SUPG) methods have also been developed and analyzed for the elliptic optimal control problems constrained by a advection-dominated state equation in~\cite{CollisHeinkenschloss2024_SUPGOptimalControl,HeinkenschlossLeykekhman2010_SUPGLocalError}. In particular, an error estimate in the energy norm for an HDG discretization has been established in \cite{liuzhang2025}, where the dependence on the regularization parameter is explicitly tracked.




In terms of existing solvers for the discretized system, multigrid methods provide an effective approach for solving elliptic optimal control problems. Typically, there are two main approaches (cf. \cite{Schoberl:2011:RobustMG}): the all-at-once approach (see \cite{Taasan1991,SimonZulehner2009,brenner2020multigrid}), in which the multigrid method is applied directly to the coupled optimality system, and the approach in \cite{Biros:2005:LNKS,BirosGhattas2005PartII,HazraSchulz2004}, where multigrid methods are applied separately to the equations of the state variable, the dual variable, and/or the control variable as components of the overall iterative scheme.
For example, in \cite{SimonZulehner2009}, an additive Schwarz-type iterative method is formulated as a preconditioned Richardson method, which enables the use of multigrid as a fully coupled all-at-once solver for the Karush–Kuhn–Tucker system, and in \cite{Biros:2005:LNKS}, a domain-decomposed Schur complement partial differential equation (PDE) solver with a Krylov–Schur preconditioner is proposed, which employs an approximate state/decision variable decomposition by replacing the forward PDE Jacobians with their respective preconditioners.

BDDC algorithms are popular nonoverlapping domain decomposition techniques that were first introduced in~\cite{Doh04}; they have been developed for solving symmetric positive definite problems with mixed and hybrid formulations~\cite{Tu2005mix,Tu2007BDDC}, DG methods~\cite{KimChungLee2014BDDC}, and HDG discretizations~\cite{TuWang2016HDG}. A variant of the BDDC method has been proposed for symmetric indefinite systems arising from finite element discretizations of the Helmholtz equation in~\cite{LiTu2009}. BDDC algorithm can also be extended to solve nonsymmetric positive definite problems. In~\cite{TZAD2021}, BDDC algorithms are applied to the nonsymmetric positive definite discretized system resulting from using HDG discretizations on the advection–diffusion equation.
In~\cite{TZOseen2022}, BDDC algorithms are developed for the Oseen equations, where the globally coupled unknowns are reduced to the numerical trace of the velocity on element boundaries and the mean pressure on each element. The resulting discretized nonsymmetric saddle point system, derived from HDG discretizations, is solved within a benign subspace framework using GMRES.
In~\cite{liuzhang2025}, we propose a BDDC preconditioner to solve an elliptic distributed optimal control problem discretized by HDG methods. In this approach, the state and adjoint state variables on the subdomain interfaces are coupled as the primary unknowns. The original problem is reduced to a global interface system. A BDDC preconditioner is constructed by assembling the subdomain Schur complements and is used to solve the coupled interface unknowns. Once the interface variables are computed, the remaining variables in the system can be efficiently recovered.

In this article, we present a convergence analysis for the BDDC-preconditioned GMRES method for the optimal control problem proposed in~\cite{liuzhang2025}. The analysis establishes both upper and lower bounds with respect to the regularization parameter $\beta$, as stated in Theorem~\ref{theorem:EES}, particularly for small values of $\beta$. This work is a continuation of \cite{liuzhang2025}. We show that when the subdomain size $H$ is sufficiently small, the convergence rate becomes independent of $\beta$, and the number of iterations is independent of the number of subdomains.
The analysis proceeds in several steps. First, given the nuemrical trace
$\lambdae$ defined on the union of element boundaries, we construct some extensions from the HDG discretized system of the optimal control problem. Using the results in~\cite{TZAD2021}, we derive estimates for the norms of these extensions by relating the optimal control equations to a pair of advection-diffusion equations.
Second, due to the presence of the $L^2$ inner product in the nonsymmetric part of the bilinear form, we introduce a new norm in Theorem~\ref{theorem:EES}. Unlike [28, Theorem 6.6], which only focuses on the symmetric part of the bilinear form, our norm incorporates the $L^2$ norm of the extension operators in addition to the symmetric component. Consequently, it is not equivalent to the triple-bar norm used in the advection-diffusion problem (see \cite{TZAD2021}), which behaves similarly to an $H^1$ semi-norm.
Instead, we propose a full norm rather than a semi-norm, which behaves similarly to an energy norm as defined in~\cite[Equation (3.3)]{liuzhang2025}. This difference makes it difficult to directly estimate the $h$-norm, an $L^2$ type norm, of the optimal control extension for certain key operators in the lower bound analysis. To address this issue, we estimate the $h$-norm of the corresponding advection-diffusion extension of these operators, then apply norm equivalence relations to obtain the desired lower bound.
Third, we establish upper and lower bounds that explicitly track the dependence on the regularization parameter $\beta$. These bounds are derived by relating extensions of the optimal control problem to extensions of the advection-diffusion problem and applying known estimates for the latter.

The rest of the paper is organized as follows. In Section \ref{sec:hdg}, we introduce the HDG discretizations of the optimal control problem and derive the corresponding matrix formulation. In Section \ref{sec:bddc}, the subdomain local problem, a reduced subdomain interface problem,
and the BDDC preconditioner are given. In Section \ref{sec:extennorms}, we define some extensions for the optimal control problem and introduce some useful blinear forms and norms. In Section \ref{sec:conv}, we give some estimates of these extensions and establish the upper and lower bounds for the BDDC algorithm with the regularization parameter $\beta$ explicitly tracked. In Section \ref{sec:lemmaproofs}, we provide the proofs of the lemmas used in Section \ref{sec:conv}. In Section \ref{sec:numerics}, we provide some numerical experiments to confirm the theoretical results. We end with some concluding remarks in Section \ref{sec:conclude}.

\section{Problem setting and HDG discretizations}\label{sec:hdg}
We consider the following elliptic optimal control problem  defined on a bounded convex polygonal domain $\Omega$ in $\mathbb{R}^2$ : 
\begin{equation}\label{optcon}
\mbox{Find}\quad (\bar{y},\bar{u})=\argmin_{(y,u)\in K}\left [ \frac{1}{2}\|y-y_d\|^2_{\LO2}+\frac{\beta}{2}\|u\|^2_{\LO2}\right],
\end{equation} 
where $(y,u)$ belongs to $K\subset H^1_0(\Omega)\times L^2(\Omega)$ and satisfies
\begin{equation*}
\int_{\Omega} \nabla y\cdot \nabla v\ dx+\int_{\Omega} (\bz\cdot\nabla y) v\ dx=\int_{\Omega}uv \ dx \quad \forall v\in H^1_0(\Omega),
\end{equation*}
with vector field $\bz\in [W^{1,\infty}(\Omega)]^2$ and $\nabla \cdot{\bz}=0$. Here $y$ denotes the state variable and $u$ is the control variable.

{{Let \(p\) denote the adjoint state variable. In~\cite{liuzhang2025}, we provide a detailed derivation of the following system from a saddle point problem~\eqref{optcon} by applying a scaling argument to the variables \(p\) and \(y\); further details can be found therein.}} The optimal control problem can be described as follows: find $(y,p)\in H^1_0(\Omega)\times H^1_0(\Omega)$ such that  
\begin{subequations}\label{Osystem}
 \begin{alignat}{3}
 \beta^{\frac12}(-\Delta y{+}\bz\cdot\nabla{y}){-}p&=f\quad&&\mbox{in}\quad\Omega,\\
 y&=0\quad&&\mbox{on}\quad\partial\Omega,\\
 \beta^{\frac12}(-\Delta p{-}\nabla\cdot(\bz p))+y&=g\quad&&\mbox{in}\quad\Omega,\\
 p&=0\qquad &&\mbox{on}\quad\partial\Omega.
 \end{alignat}
 \end{subequations}
 Let $\bq=-\nabla y$ and $\bp=-\nabla p$. we can write \eqref{Osystem} as 
\begin{subequations}\label{eq:fsystem}
 \begin{alignat}{3}
\bq+\nabla y&=0\quad&&\mbox{in}\quad\Omega,\\
 \beta^{\frac12}(\nabla\cdot\bq+\bz\cdot\nabla y){-}p&=f\quad&&\mbox{in}\quad\Omega,\\
 y&=0\quad&&\mbox{on}\quad\partial\Omega,\\
 \bp+\nabla p&=0\quad&&\mbox{in}\quad\Omega,\\
 \beta^{\frac12}(\nabla\cdot \bp-\nabla\cdot(\bz p))+y&=g\quad&&\mbox{in}\quad\Omega,\\
 p&=0\qquad &&\mbox{on}\quad\partial\Omega.
 \end{alignat}
 \end{subequations}
\subsection{HDG formulation and the matrix form}
We define a shape-regular and quasi-uniform triangulation $\mathcal{T}_h$ of the domain $\Omega$, where $h$ represents the characteristic element size. Each element in \(\mathcal{T}_h\) is denoted by \(K\).
Define the inner products
$
(\cdot, \cdot)_{\mathcal{T}_h} = \sum_{K \in \mathcal{T}_h} (\cdot, \cdot)_K $ {and}
$\langle \cdot, \cdot \rangle_{\partial \mathcal{T}_h} = \sum_{K \in \mathcal{T}_h} \langle \cdot, \cdot \rangle_{\partial K},
$
where \((\cdot, \cdot)_K\) and \(\langle \cdot, \cdot \rangle_{\partial K}\) represent the \(L^2\)-inner products for functions defined over the element \(K\) and its boundary \(\partial K\), respectively. Define the finite element space as follows: 
\begin{align*}
&{\bf V}_h=\{{\bf v}\in (L^2(\Omega))^n:{\bf v}|_K\in (P^k(K))^n,\forall K\in\mathcal{T}_h\},\\
&{W}_h=\{w\in L^2(\Omega): w|_K\in P^k(K),\forall K\in\mathcal{T}_h\},\\
&{\Lambda}^c_h=\{\mu\in L^2(\mathcal{E}_h):\mu|_{e}\in P^k(e),\forall e\in \mathcal{E}_h \},\\
&\Lambda_h=\{\mu\in {G}_h:\mu|_{e}=0,\forall e\in\partial\Omega\}.
\end{align*}
For notational simplicity, we will drop the subscript $h$ and denote the corresponding spaces by ${\bf V}, {W}$ and $\Lambda$.
The HDG method is to find $({\tqb}_h,{\tpb}_h,y_h,{p}_h, \widehat{y}_h,\widehat{p}_h) \in {\bf V}\times{\bf V}\times{W}\times{W}\times\Lambda\times\Lambda$ such that,
\begin{subequations}\label{eq:hdg2}
  \begin{alignat}{1}
-\beta^{\frac12}({\bf q}_h,{\bf r}_1)_{\T_h}+\beta^{\frac12}({y}_h,\nabla\cdot{\bf r}_1)_{\T_h}-\beta^{\frac12}\l\widehat{{y}}_h,{\bf r}_1\cdot{\bf n}\r_{\partial\T_h}&=0,\label{eq:qh1}\\
-\beta^{\frac12}({\bf p}_h,{\bf r}_2)_{\T_h}+\beta^{\frac12}({p}_h,\nabla\cdot{\bf r}_2)_{\T_h}-\beta^{\frac12}\l\widehat{{p}}_h,{\bf r}_2\cdot{\bf n}\r_{\partial\T_h}&=0,\label{eq:ph1}\\
\beta^{\frac12}(\nabla\cdot{\tqb}_h,w_1)_{\T_h}-\beta^{\frac12}({y}_h,\bz\cdot\nabla{w}_1)_{\T_h}
+\beta^{\frac12}\l\tau_1{y}_h,{ w}_1\r_{\partial \T_h}\label{Uh1}\\
\nonumber
-({p}_h,w_1)_{\T_h}
+\beta^{\frac12}\l(\bz\cdot{\bf n}-\tau_1)\widehat{y}_h,w_1\r_{\partial \T_h}&=(f,w_1)_{\T_h},\\
\beta^{\frac12}(\nabla\cdot{\tpb}_h,w_2)_{\T_h}+({y}_h,w_2)_{\T_h}+\beta^{\frac12}({p}_h,\bz\cdot\nabla w_2)_{\T_h}\label{Uh2}\\\nonumber
+\beta^{\frac12}\l\tau_2{p}_h,w_2\r_{\partial\T_h}
-\beta^{\frac12}\l(\tau_2+\bz\cdot{\bf n})\widehat{p}_h,w_2)\r_{\partial\T_h}&=(g,w_2)_{\T_h},\\
-\beta^{\frac12}\l{\bf q}_h\cdot{\bf n},\mu_1\r_{\partial \T_h}-\beta^{\frac12}\l\tau_1{y}_h,\mu_1\r_{\partial \T_h}-\beta^{\frac12}\l(\bz\cdot{\bf n}-\tau_1)\widehat{y}_h,\mu_1)\r_{\partial\T_h}&=0,\\
-\beta^{\frac12}\l{\bf p}_h\cdot{\bf n},\mu_2\r_{\partial \T_h}-\beta^{\frac12}\l\tau_2{p}_h,\mu_2\r_{\partial \T_h}+\beta^{\frac12}\l(\bz\cdot{\bf n}+\tau_2)\widehat{p}_h,\mu_2)\r_{\partial\T_h}&=0,
\end{alignat}
\end{subequations}
for all $({\bf r}_1,{\bf r}_2,w_1,w_2,\mu_1,\mu_2)\in{\bf V}\times{\bf V}\times {W}\times {W}\times\Lambda\times\Lambda$, where $\tau_1$ are $\tau_2$ are piecewise 
local stabilization parameters, see \cite{liuzhang2025,chen2018hdg} for more details. 
Define ${\bf G}=\begin{bmatrix}
\tqb\\
\tpb
\end{bmatrix},{\bf u}=\begin{bmatrix}
y\\
p
\end{bmatrix},{\lambdae}=\begin{bmatrix}
{\ty}\\
{\tp}
\end{bmatrix}$ and ${\bf Z}_h=\begin{bmatrix}
F_h\\
G_h
\end{bmatrix}$, where $F_h=(f,w_1)_{\T_h}$ and $G_h=(f,w_2)_{\T_h}$, and 
${\tqb, \tpb}, y, p, \ty$ and $\tp$ are the unknowns associated with ${\tqb}_h, {\tpb}_h, y_h, p_h, \ty_h$  and $\tp_h$, respectively. The matrix form of system \eqref{eq:hdg2} can be written as 
\begin{align}\label{ALu}
\A_{c}\begin{bmatrix}
{\bf G}\\
\u\\
\lambdae
\end{bmatrix}=\begin{bmatrix}
{\bf 0}\\
{\bf Z}_h\\
{\bf 0}
\end{bmatrix},
\end{align}
where 
$$
\A_c=\begin{bmatrix}
\A_{\G\G} &\A^T_{\u\G} & \A^T_{\lambdae\G}\\
\A_{\u\G} &\A_{\u\u} &\A_{\u\lambdae}\\
\A_{\lambdae\G}&\A_{\lambdae\u}&\A_{\lambdae\lambdae}
\end{bmatrix}.$$
For any $({\bf q}_1,u_1, \lambda_1,{\bf p}_1,v_1, \mu_1),({\bf q}_2,u_2, \lambda_2,{\bf p}_2,v_2, \mu_2)\in {\bf V}\times W\times\Lambda\times {\bf V}\times W\times\Lambda$, 
by the definition of $\A_c$, we can define the following blinear form:
 \begin{align*}
&\cB_h\big(({\bf q}_1,u_1, \lambda_1,{\bf p}_1,v_1, \mu_1),({\bf q}_2,u_2, \lambda_2,{\bf p}_2,v_2, \mu_2)\big)\\
&=({\bf q}_2,{\bf p}_2,u_2,v_2,\lambda_2,\mu_2){\A_c}({\bf q}_1,{\bf p}_1,u_1,v_1,\lambda_1,\mu_1)^T.
\end{align*}
We can reorder the unknowns by regrouping the equations in~\eqref{eq:hdg2} as follows:
\begin{align}
&\cB_h\big(({\bf q}_1,u_1, \lambda_1,{\bf p}_1,v_1, \mu_1),({\bf q}_2,u_2, \lambda_2,{\bf p}_2,v_2, \mu_2)\big)\label{eq:hdgconcise1}\\
&=({\bf q}_2,u_2,\lambda_2,{\bf p}_2,v_2,\mu_2){\A_a}({\bf q}_1,u_1,\lambda_1,{\bf p}_1,v_1,\mu_1)^T,\notag
\end{align}
where
\begin{equation}\label{Aa}
\A_a=\begin{bmatrix}
\beta^{\frac12} {\Aadt_1}  & -{L}\\
{L} &\beta^{\frac12}  {\Aadt_2}
\end{bmatrix}.
\end{equation}
Here $\Aadt_1$ is defined as the matrix in \cite[equation (2.23)]{TZAD2021} with viscosity $\epsilon = 1$, corresponding to the advection-diffusion equation for the state variable $y$. The matrix
$ \Aadt_2$ is obtained by replacing ${\bz}$ with $-\bz$ and $\tau_1$ with $\tau_2$ in $\Aadt_1$, corresponding to the advection-diffusion equation for the dual variable $p$.
We see that ${L}$ satisfies
\begin{equation}\label{Ldef}
({\bf q}_2,u_2,\lambda_2){L}({\bf q}_1,u_1,\lambda_1)^T=(u_1,u_2)_{\T_h},
\end{equation}
for all $({\bf q}_1,u_1, \lambda_1),({\bf q}_2,u_2, \lambda_2)\in {\bf V}\times W\times\Lambda$. Moreover, let
\begin{equation}\label{Ladef}
{\L_{\bf a}}=\begin{bmatrix}{L}& {\bf 0}\\ {\bf 0}&{L}\end{bmatrix},
\end{equation} 
then 
\begin{align*}
({\bf q}_2,u_2,\lambda_2,{\bf p}_2,v_2,\mu_2){\L_{\bf a}}({\bf q}_1,u_1,\lambda_1,{\bf p}_1,v_1,\mu_1)^T=(u_1,u_2)_{\T_h}+(v_1,v_2)_{\T_h},
\end{align*}
for any $({\bf q}_1,u_1, \lambda_1,{\bf p}_1,v_1, \mu_1),({\bf q}_2,u_2, \lambda_2,{\bf p}_2,v_2, \mu_2)\in {\bf V}\times W\times\Lambda\times {\bf V}\times W\times\Lambda$.
\begin{definition}
Let \(\Lambdae= \Lambda \times \Lambda\). Given \(\lambdae \in \Lambdae\), we solve~\eqref{ALu} locally on each element \(K\) with \({\bf Z}_h = \mathbf{0}\) and obtain the solution \(({\bf Q}\lambdae, {\bf U}\lambdae)\) defined as the extensions of \(\lambdae\) from the element boundary into the element interior for the optimal control problem. Solving~\eqref{ALu} over all elements in \(\T_h\) with ${\bf Z}_h = \mathbf{0}$ yields the global solution \(({\bf Q} \lambdae, {\bf U} \lambdae)\), which together with  $\lambdae$ satisfy
\begin{equation}\label{equation:AmatrixQU}
\begin{aligned}
\begin{bmatrix}
\A_{\G\G} &\A^T_{\u\G} & \A^T_{\lambdae\G}\\
\A_{\u\G} &\A_{\u\u} &\A_{\u\lambdae}\\
\end{bmatrix}
\begin{bmatrix}
{\bf Q}\lambdae\\
{\bf U}\lambdae\\
\lambdae
\end{bmatrix}
=\begin{bmatrix}
{\bf 0}\\
{\bf 0}
\end{bmatrix}.
\end{aligned}
\end{equation}
Denote ${\bf Q}\lambdae=(Q_{A_1}\lambdae, Q_{A_2}\lambdae)^T$,${\bf U}\lambdae=(U_{A_1}\lambdae,U_{A_2}\lambdae)^T$, we define  
 \begin{align}\label{defEA}
{\bf E}_{\C}(\lambdae)
=(Q_{A_1}\lambdae,U_{A_1}\lambdae,\lambda, Q_{A_2}\lambdae,U_{A_2}\lambdae,\mu)^T.
\end{align}
\end{definition}
Given $\lambdae = (\lambda, \mu)^T, \s = (s, t)^T \in \Lambdae$, using the identities 
$
\langle \bz \cdot \mathbf{n} \lambda, s \rangle_{\partial\T_h} = 0,  
\langle \bz \cdot \mathbf{n} \mu, t \rangle_{\partial\T_h} = 0,
$ we introduce the following symmetric and nonsymmetric bilinear forms:
\begin{align}
{\bf b}_h(\lambdae,\s)&=\beta^{\frac12}({Q}_{A_1}\lambdae,Q_{A_1}\s)_{\T_h}+\beta^{\frac12}\l(\tau_1-\frac12\bz\cdot{\bf n})(U_{A_1}\lambdae-\lambda),(U_{A_1}\s-s)\r_{\partial\T_h}\label{bilinearb}\\
&\quad+\beta^{\frac12}(Q_{A_2}\lambdae,Q_{A_2}\s)_{\T_h}+\beta^{\frac12}\l(\tau_2+\frac12\bz\cdot{\bf n})(U_{A_2}\lambdae-\mu),(U_{A_2}\s-t)\r_{\partial\T_h},\nonumber\\
{\bf z}_h(\lambdae,\s)&=\frac{\beta^{\frac12}}2(U_{A_1}\s,\bz\cdot\nabla U_{A_1}{\lambdae})_{\T_h}-\frac{\beta^{\frac12}}2(U_{A_1}\lambdae,\bz\cdot\nabla U_{A_1}\s)_{\T_h}\label{bilinearz}\\
&\quad-\frac{\beta^{\frac12}}2\l\bz\cdot{\bf n}s,U_{A_1}\lambdae\r_{\partial\T_h}+\frac{\beta^{\frac12}}2
\l\bz\cdot{\bf n}\lambda,U_{A_1}\s\r_{\partial\T_h}-(U_{A_1}\s,U_{A_2}\lambdae)_{\T_h}\nonumber\\
&\quad-\frac{\beta^{\frac12}}2(U_{A_2}\s,\bz\cdot\nabla U_{A_2}{\lambdae})_{\T_h}+\frac{\beta^{\frac12}}2(U_{A_2}\lambdae,\bz\cdot\nabla U_{A_2}\s)_{\T_h}\nonumber\\
&\quad+\frac{\beta^{\frac12}}2\l\bz\cdot{\bf n}t,U_{A_2}\lambdae\r_{\partial\T_h}-\frac{\beta^{\frac12}}2
\l\bz\cdot{\bf n}\mu,U_{A_2}\s\r_{\partial\T_h(\Omega_i)}+(U_{A_1}\lambdae,U_{A_2}\s)_{\T_h}.\nonumber
\end{align}
The full bilinear form is defined by
\begin{align*}
{\bm a}_h(\lambdae,\s)&={\bf b}_h(\lambdae,\s)+{\bf z}_h(\lambdae,\s)\label{bilineara}.
\end{align*}
By the definitions of ${\bm a}_h$,~\eqref{eq:hdgconcise1} and~\eqref{defEA}, we have 
\begin{equation}
\cB_h({\bf E}_{\C}(\lambdae),{\bf E}_{\C}(\s))={\bm a}_h(\lambdae,\s),\quad\forall \lambdae,\s\in\Lambdae.
\end{equation}
In order to establish the relations between the extensions of the optimal control problem and the extensions of the advection-diffusion problem, we introduce another definition.
\begin{definition}
Given $\lambdae=(\lambda,\mu)^T\in\Lambdae$, on each element $K$, we define 
\begin{align*}
{\bf Q}_{\Aad}(\lambdae)=(Q_{A^{ad}_1}\lambda, Q_{A^{ad}_2}\mu)^T,\quad {\bf U}_{\Aad}(\lambdae)=(U_{A^{ad}_1}\mu,U_{A^{ad}_2}\mu)^T,
\end{align*}
where $(Q_{A^{ad}_1}\lambda$, $ U_{A^{ad}_1}\lambda)$ denotes the extensions from the element boundary into the element interior for the advection-diffusion problem, defined similar to~\cite[Equation (2.14)]{TZAD2021}, using the extension operators associated with $\Aadt_1$.  Here $(Q_{A^{ad}_2}\mu, U_{A^{ad}_2}\mu)$ is defined analogously by applying the extension operators associated with $\Aadt_2$. Similar as~\cite[Equation (2.14)]{TZAD2021}, we obtain that 
\begin{align*}
\Aadt_1
\begin{bmatrix}
{Q}_{A^{ad}_1}\lambda\\
{U}_{A^{ad}_1}\lambda\\
\lambda
\end{bmatrix}=\begin{bmatrix}
0\\
0
\end{bmatrix}
\quad
\mbox{and}
\quad
\Aadt_2\begin{bmatrix}
{Q}_{A^{ad}_2}\mu\\
{U}_{A^{ad}_2}\mu\\
\mu
\end{bmatrix}=\begin{bmatrix}
0\\
0
\end{bmatrix}.
\end{align*}
We also define
\begin{equation}\label{defEAd}
{\bf E}_{\A^{ad}}(\lambdae)=(Q_{A^{ad}_1}\lambda,
U_{A^{ad}_1}\lambda,\lambda,Q_{A^{ad}_2}\mu,U_{A^{ad}_2}\mu,\mu)^T.
\end{equation}
Let $\Aad=\begin{bmatrix}
{\Aadt_1}& {\bf 0}\\
{\bf 0} & {\Aadt_2}
 \end{bmatrix}$. Given $\lambdae,\s\in\Lambdae$, we define the following bilinear form 
\begin{equation}\label{cadBh}
\begin{aligned}
\cB^{\ad}_h\big({\E}_{\Aad}(\lambdae),{\E}_{\Aad}(\s)\big)
&={\bf E}^T_{\Aad}(\s) \Aad{\bf E}_{\Aad}(\lambdae).
 \end{aligned}
\end{equation}
\end{definition}

Recall the system~\eqref{ALu}, we eliminate ${\G}$ and $\u$ in each element independently and obtain a system for 
$\lambdae$ as follows:
\begin{align}\label{Ab}
\A\lambdae={\bm b},
\end{align}
where 
\begin{align*}
\A=\A_{\lambdae\lambdae}-\begin{bmatrix}
\A_{\lambdae\G}&\A_{\lambdae\u}
\end{bmatrix}\begin{bmatrix}
\A_{\G\G}&\A^T_{\u\G}\\
\A_{\u\G}&\A_{\u\u}
\end{bmatrix}^{-1}
\begin{bmatrix}
\A^T_{\lambdae\G}\\
\A_{\u\lambdae}
\end{bmatrix},
\end{align*}
and 
\begin{align*}
{\bf b}=-\begin{bmatrix}
\A_{\lambdae\G}&\A_{\lambdae\u}
\end{bmatrix}
\begin{bmatrix}
\A_{\G\G} &\A^T_{\u\G}\\
\A_{\u\G}&\A_{\u\u}
\end{bmatrix}^{-1}
\begin{bmatrix}
{\bf 0}\\
{\bf Z}_h
\end{bmatrix}.
\end{align*}
Let $\A = \B + \Z$, where 
\begin{equation}\label{BZdef}
\B= \frac{\A + \A^T}{2}, \quad \Z = \frac{\A - \A^T}{2}
\end{equation}
denote the symmetric and nonsymmetric parts of $\A$, respectively. 
For any $\lambdae,\s\in\Lambdae$, we define the following bilinear forms
\begin{equation}\label{sumhat}
\begin{aligned}
&\langle\lambdae,{\s}\rangle_{{\B}}={\s}^T{\B}\lambdae={\bf b}_h(\lambdae,\s), \quad\langle\lambdae,{\s}\rangle_{{\Z}}={\s}^T{\Z}\lambdae={\bf z}_h(\lambdae,\s).
\end{aligned}
\end{equation}
In addition, we define the matrix $\L$ by 
\begin{equation}\label{defL}
\langle\lambdae,{\s}\rangle_{{\L}}={\s}^T{\L}{\lambdae}=(U_{A_1}\lambdae,U_{A_1}\s)_{\T_h}+(U_{A_2}\lambdae,U_{A_2}\s)_{\T_h}.
\end{equation}
\section{The BDDC algorithm}\label{sec:bddc}
\subsection{Domain decomposition and a reduced subdomain interface problem}
We decompose the domain $\Omega$ into $N$ nonoverlapping subdomains $\Omega_i(i=1,2,\cdots,N)$ and the diameter of each subdomain is $H_i$. Let $H=\max\limits_{i}H_i$ and $\Gamma=\left( \bigcup_{i \neq j} (\partial \Omega_{i} \cap \partial\Omega_{j}) \right) \setminus \partial \Omega$ be the subdomain interface.
The spaces of finite element functions on $\Omega_i$ are denoted by ${\bf V}^{(i)}$, $W^{(i)}$, and $\Lambda^{(i)}$.

Given $\lambdae^{(i)}=(\lambda^{(i)},\mu^{(i)})^T,\s^{(i)}=(s^{(i)},t^{(i)})^T\in\Lambdae^{(i)}$, we define the following bilinear forms on each subdomain $\Omega_i$:
\begin{align}
{\bf b}_h(\lambdae^{(i)}, \s^{(i)})
&= \beta^{\frac{1}{2}} (Q_{A_1} \lambdae^{(i)}, Q_{A_1} \s^{(i)})_{\T_h(\Omega_i)} \label{bh1}\\
&\quad + \beta^{\frac{1}{2}} \langle (\tau_1 - \tfrac{1}{2} \bz \cdot {\bf n})(U_{A_1} \lambdae^{(i)} - \lambda^{(i)}), (U_{A_1} \s^{(i)} - s^{(i)}) \rangle_{\partial\T_h(\Omega_i)} \notag\\
&\quad + \beta^{\frac{1}{2}} (Q_{A_2} \lambdae^{(i)}, Q_{A_2} \s^{(i)})_{\T_h(\Omega_i)} \notag\\
&\quad + \beta^{\frac{1}{2}} \langle (\tau_2 + \tfrac{1}{2} \bz \cdot {\bf n})(U_{A_2} \lambdae^{(i)} - \mu^{(i)}), (U_{A_2} \s^{(i)} - t^{(i)}) \rangle_{\partial \T_h(\Omega_i)},\notag\\
{\bf z}_h(\lambdae^{(i)}, \s^{(i)}) 
&= \frac{\beta^{\frac{1}{2}}}{2} (U_{A_1} \s^{(i)}, \bz \cdot \nabla U_{A_1} \lambdae^{(i)})_{\T_h(\Omega_i)}
- \frac{\beta^{\frac{1}{2}}}{2} (U_{A_1} \lambdae^{(i)}, \bz \cdot \nabla U_{A_1} \s^{(i)})_{\T_h(\Omega_i)} \label{zh2}\\
&\quad - \frac{\beta^{\frac{1}{2}}}{2} \langle \bz \cdot {\bf n} \, s^{(i)}, U_{A_1} \lambdae^{(i)} \rangle_{\partial \T_h(\Omega_i)}
+ \frac{\beta^{\frac{1}{2}}}{2} \langle \bz \cdot {\bf n} \, \lambda^{(i)}, U_{A_1} \s^{(i)} \rangle_{\partial \T_h(\Omega_i)} \notag\\
&\quad - (U_{A_1} \s^{(i)}, U_{A_2} \lambdae^{(i)})_{\T_h(\Omega_i)}
- \frac{\beta^{\frac{1}{2}}}{2} (U_{A_2} \s^{(i)}, \bz \cdot \nabla U_{A_2} \lambdae^{(i)})_{\T_h(\Omega_i)} \notag\\
&\quad + \frac{\beta^{\frac{1}{2}}}{2} (U_{A_2} \lambdae^{(i)}, \bz \cdot \nabla U_{A_2} \s^{(i)})_{\T_h(\Omega_i)}
+ \frac{\beta^{\frac{1}{2}}}{2} \langle \bz \cdot {\bf n} \, t^{(i)}, U_{A_2} \lambdae^{(i)} \rangle_{\partial \T_h(\Omega_i)} \notag\\
&\quad - \frac{\beta^{\frac{1}{2}}}{2} \langle \bz \cdot {\bf n} \, \mu^{(i)}, U_{A_2} \s^{(i)} \rangle_{\partial \T_h(\Omega_i)}
+ (U_{A_1} \lambdae^{(i)}, U_{A_2} \s^{(i)})_{\T_h(\Omega_i)},\notag\\
{\bm a}_h(\lambdae^{(i)}, \s^{(i)})&={\bf b}_h(\lambdae^{(i)}, \s^{(i)}) +{\bf z}_h(\lambdae^{(i)}, \s^{(i)}).\label{ah4}
\end{align}
The local bilinear form $\cB^{(i)}_h$ is defined by
\begin{align*}
\cB^{(i)}_h \big({\bf E}_{\C}(\lambdae^{(i)}),{\bf E}_{\C}(\s^{(i)})\big)={\bm a}_h(\lambdae^{(i)}, \s^{(i)})-\frac{1}{2} \langle {\bm \zeta} \cdot {\bf n} \, \lambda^{(i)}, s^{(i)} \rangle_{\partial \T_h(\Omega_i)}
+\frac{1}{2} \langle {\bm \zeta} \cdot {\bf n} \, \mu^{(i)}, t^{(i)} \rangle_{\partial \T_h(\Omega_i)}, 
\end{align*}
which is obtained by~\eqref{eq:hdgconcise1} and restricting $\Aa$ to the subdomain $\Omega_i$. 

To make the subdomain local problem positive definite, we introduce the Robin boundary conditions and let
\begin{equation}\label{cBi}
\begin{aligned}
&\cB^{(i)}\big({\bf E}_{\C}(\lambdae^{(i)}),{\bf E}_{\C}(\s^{(i)})\big) \\
&= \cB^{(i)}_h \big({\bf E}_{\C}(\lambdae^{(i)}),{\bf E}_{\C}(\s^{(i)})\big) 
+ \frac{1}{2} \langle {\bm \zeta} \cdot {\bf n} \, \lambda^{(i)}, s^{(i)} \rangle_{\partial \T_h(\Omega_i)}
- \frac{1}{2} \langle {\bm \zeta} \cdot {\bf n} \, \mu^{(i)}, t^{(i)} \rangle_{\partial \T_h(\Omega_i)} \\
&= {\bf E}_{\C}^T(\s^{(i)}) \Aaoi {\bf E}_{\C}(\lambdae^{(i)}) 
= {\bf E}_{\C}^T(\s^{(i)}) 
\begin{bmatrix}
\beta^{\frac{1}{2}} {A^{ad}_1}^{(i)}&-{L^{(i)}}\\
L^{(i)} & \beta^{\frac{1}{2}}{A^{ad}_2}^{(i)}
\end{bmatrix}
{\bf E}_{\C}(\lambdae^{(i)})
\end{aligned}
\end{equation}
where $\Aai, {A^{ad}_1}^{(i)}, {A^{ad}_2}^{(i)}$ denote the subdomain matrices of $\Aa$, $\Aadt_1$, and $\Aadt_2$ in \eqref{Aa}, restricted to the subdomain $\Omega_i$ with Robin boundary conditions, respectively. We then have
\begin{align}\label{Bhai}
\cB^{(i)}({\bf E}_{\C}(\lambdae^{(i)}),{\bf E}_{\C}(\s^{(i)}))={\bm a}^{(i)}_h(\lambdae^{(i)},\s^{(i)}),\quad\forall \lambdae^{(i)},\s^{(i)}\in\Lambdae^{(i)}.
\end{align}

To reduce the global problem in ~\eqref{Ab} to a subdomain interface problem, we perform the following decomposition
\begin{align*}
\Lambdae=\bigg(\bigoplus_{i=1}^N \Lambdae^{(i)}_{I}\bigg)\bigoplus \widehat{\Lambdae}_\Gamma,
\end{align*}
where $\widehat{\Lambdae}_\Gamma$ denotes the degrees of freedom on the subdomain interface and $\Lambdae^{(i)}_I$ denotes the  degrees of freedom in the interior of subdomain  $\Omega_i$ .

Given $\lambdae_\Gamma\in\widehat{\Lambdae}_\Gamma,$ the original global problem \eqref{Ab} can be decomposed as 
\begin{align}\label{AIGamma}
\begin{bmatrix}
{\A}_{II}& {\A}_{I\Gamma}\\
{\A}_{\Gamma I} &{\A}_{\Gamma\Gamma}
\end{bmatrix}
\begin{bmatrix}
\lambdae_I\\
\lambdae_\Gamma
\end{bmatrix}=\begin{bmatrix}
{\bf b}_I\\
{\bf b}_\Gamma
\end{bmatrix}.
\end{align}
Therefore, for each subdomain $\Omega_i$, the subdomain problem can be written as 
\begin{align}\label{Asubdomain}
\begin{bmatrix}
{\A}^{(i)}_{II}& {\A}^{(i)}_{I\Gamma}\\
{\A}^{(i)}_{\Gamma I} &{\A}^{(i)}_{\Gamma\Gamma}
\end{bmatrix}
\begin{bmatrix}
\lambdae^{(i)}_I\\
\lambdae^{(i)}_\Gamma
\end{bmatrix}=\begin{bmatrix}
{\bf b}^{(i)}_I\\
{\bf b}^{(i)}_\Gamma
\end{bmatrix}.
\end{align}
The subdomain local Schur complement  ${\bf S}^{(i)}_\Gamma$  can be defined as:
\begin{equation}\label{equationSi_gamma}
{\S}^{(i)}_\Gamma\lambdae^{(i)}_\Gamma={\g}^{(i)}_\Gamma,
\end{equation}
where
\begin{align*}
{\S}^{(i)}_\Gamma&={\A}^{(i)}_{\Gamma\Gamma}-{\A}^{(i)}_{\Gamma I}{\A}^{(i)^{-1}}_{II}{\A}^{(i)}_{I\Gamma},\\
{\g}^{(i)}_\Gamma&={\bf b}^{(i)}_\Gamma-{\A}^{(i)}_{\Gamma I}{\A}^{(i)^{-1}}_{II}{\bf b}^{(i)}_I.
\end{align*}
Denote $\R^{(i)}_\Gamma$ as the restriction operator from $\widehat\Lambdae_\Gamma$ to the subdomain interface ${\Lambdae}^{(i)}_\Gamma$.
By assembling the subdomain local Schur complement ${\S}^{(i)}_\Gamma$, we obtain the global Schur interface problem:
find  $\lambdae\in\widehat{\Lambdae}_{\Gamma}$ such that 
\begin{equation}\label{Shat_Gamma}
\widehat{\S}_\Gamma\lambdae_\Gamma={\bf g}_\Gamma,
\end{equation}
where 
\begin{align*}
\widehat{\S}_\Gamma=\sum\limits_{i=1}^N {\bf R}^{(i)^T}_{\Gamma}{\S}^{(i)}_\Gamma {\bf R}^{(i)}_\Gamma,\qquad {\g}_\Gamma=\sum\limits_{i=1}^N{\bf R}^{(i)^T}_\Gamma{\g}^{(i)}_\Gamma.
\end{align*}
\subsection{The BDDC precondtioner}
We define the partially assembled interface space $\widetilde{\bm\Lambda}_\Gamma$ as
\begin{align*}
\widetilde{\bm\Lambda}_\Gamma = \widehat{\bm\Lambda}_{\Pi} \bigoplus \bm\Lambda_{\Delta} = \widehat{\bm\Lambda}_{\Pi} \bigoplus \left( \prod_{i=1}^N \bm\Lambda^{(i)}_{\Delta} \right),
\end{align*}
where $\widehat{\bm\Lambda}_{\Pi}$ denotes the space of primal variables, which are typically continuous across subdomain interfaces. The space $\bm\Lambda_{\Delta}$, given by the direct sum of the local spaces $\Lambdae^{(i)}_{\Delta}$, consists of the remaining interface degrees of freedom, which are typically discontinuous across subdomain interfaces.

Let $\widetilde{\R}_\Gamma$ denote the injection operator from the space $\widehat\Lambdae_\Gamma$ into $\widetilde{\Lambdae}_\Gamma$. We also define
$
\widetilde{\R}_{\D,\Gamma} = \D \widetilde{\R}_\Gamma,
$
where $\D$ is a diagonal scaling matrix. The diagonal entries of $\D$ are set to 1 for rows corresponding to primal interface variables, and to $\delta_i^\dagger(x)$ for the others.
Here, $\delta_i^\dagger(x)$ is the inverse counting function defined for a subdomain interface node $x$ as:
\[
\delta^\dag_i(x) = \frac{1}{\text{card}(I_x)}, \quad \text{for } x \in \partial \Omega_i \cap \Gamma,
\]
where $N_x$ is the set of subdomain indices that contain $x$ on their boundaries. The function $\text{card}(I_x)$ gives the number of such subdomains. The scaling matrix provides a partition of unity such that
\begin{equation}\label{scalingD}
\widetilde{\R}^T_{\D,\Gamma}\widetilde{\R}_{\Gamma}=\widetilde{\R}^T_{\Gamma}\widetilde{\R}_{\D,\Gamma}={\bf I}.
\end{equation}
Various scaling strategies can be used in BDDC algorithms. Among them, deluxe scaling is particularly useful for problems with discontinuous coefficients, as it enhances the robustness of the preconditioner in the presence of parameter jumps in the model~\cite{ZT:2016:Darcy, Widlund:2020:BDDC, Deluxe, WD:2016:deluxe}.

Let $\overline{\R}_\Gamma$  be the direct sum of ${\R}^{(i)}_{\Gamma}$, which is the restriction operator from $\widetilde{\Lambdae}_\Gamma$ to $\Lambdae^{(i)}$.
Let ${\S}_\Gamma$ be the direct sum of the subdomain local Schur complement  $\S^{(i)}_\Gamma$ defined in equation \eqref{equationSi_gamma}. The partially assembled interface Schur complement is defined as
\begin{align}\label{Sgamma}
\widetilde{\S}_\Gamma=\overline{\R}_\Gamma^T{\S}_\Gamma\overline{\R}_\Gamma.
\end{align}
With \eqref{Sgamma}, we can define BDDC preconditioner as  
\begin{align*}
{\bf P}^{-1}_{\bf BDDC}=\widetilde{\R}^T_{\D,\Gamma}\widetilde{\S}^{-1}_\Gamma
\widetilde{\R}_{\D,\Gamma}.
\end{align*}
Multiplying the BDDC preconditioner to the global interface problem~\eqref{Shat_Gamma} both sides, we have
\begin{align}\label{bddc}
\widetilde{\R}^T_{\D,\Gamma}\widetilde{\S}^{-1}_\Gamma\widetilde{\R}_{\D,\Gamma}\widehat{\S}_\Gamma\lambdae_\Gamma=\widetilde{\R}^T_{\D,\Gamma}\widetilde{\S}^{-1}_\Gamma\widetilde{\R}_{\D,\Gamma}{\g}_\Gamma.
\end{align}
Since $\widehat{\S}_\Gamma$ defined in~\eqref{Shat_Gamma} is nonsymmetric but positive definite, we employ the GMRES solver to solve~\eqref{bddc}.
\section{Extensions and Norms}\label{sec:extennorms}
\subsection{Subdomain extensions}
Given $\lambdae^{(i)}_\Gamma=(\lambda^{(i)}_\Gamma,\mu^{(i)}_\Gamma)^T\in\Lambdae^{(i)}_\Gamma$, by solving the subdomain problem \eqref{Asubdomain} with ${\bf b}^{(i)}_I={\bf 0}$, we obtain  the subdomain interior $\lambdae^{(i)}_I\in\Omega_i$ as 
$
\lambdae^{(i)}_{I}=
-{{\A}^{(i)}_{II}}^{-1}{{\A}^{(i)}_{I\Gamma}}{\lambdae}^{(i)}_\Gamma.
$
Let $(\lambda^{(i)}_I,\mu^{(i)}_I)^T=\lambdae^{(i)}_I,$  we denote
$\lambda^{(i)}_{A,\Gamma}=(\lambda^{(i)}_I,\lambda^{(i)}_\Gamma)^T, \mu^{(i)}_{A,\Gamma}=(\mu^{(i)}_I,\mu^{(i)}_{\Gamma})^T$ and define
$\lambdae^{(i)}_{\A,\Gamma}=(\lambda^{(i)}_{A,\Gamma},\mu^{(i)}_{A,\Gamma})$.
Similarly, given $\lambdae_\Gamma\in\widehat{\Lambdae}_\Gamma$ or $\widetilde{\Lambdae}_\Gamma$, we obtain $\lambdae_I$ as
$\lambdae_{I}=
-{{\A}^{-1}_{II}}{{\A}_{I\Gamma}}{\lambdae}_\Gamma$
and denote
$\lambda_{A,\Gamma}=(\lambda_I,\lambda_{\Gamma})^T, \mu_{A,\Gamma}=(\mu_I,\mu_{\Gamma})^T$, where $(\lambda_I,\mu_I)^T=\lambdae_I$. 
Let 
\begin{equation}\label{lambdaIdef}
\lambdae_{\A,\Gamma}=(\lambda_{A,\Gamma},\mu_{A,\Gamma})^T
\end{equation}
denote the optimal control extension of $\lambdae_\Gamma$. We can also define the advection-diffusion extension of $\lambdae_\Gamma$ as
\begin{equation}\label{lambdaad}
\lambdae_{\A^{ad},\Gamma}=(\lambda_{{A^{ad}_1},\Gamma},\mu_{{A^{ad}_2},\Gamma})^T,
\end{equation} 
where $\lambda_{{A^{ad}_1},\Gamma}$ is the advection-diffusion extension of $\lambda_\Gamma$ defined similarly as \cite[Equation (5.1)]{TZAD2021} corresponding to $A^{ad}_1$ in~\eqref{Aa}, $\mu_{A^{ad}_2,\Gamma}$ can be obtained similarly as $\lambda_{A^{ad}_1,\Gamma}$ corresponding to $A^{ad}_2$ in~\eqref{Aa}. Let
$\lambdae^{(i)}_{\A^{ad},\Gamma}=(\lambda^{(i)}_{{A^{ad}_1},\Gamma},\mu^{(i)}_{{A^{ad}_2},\Gamma})^T$, 
where $\lambda^{(i)}_{{A^{ad}_1},\Gamma}$, $\mu^{(i)}_{{A^{ad}_2},\Gamma}$ are the restrictions of  $\lambda_{{A^{ad}_1},\Gamma}$  and $ \mu_{{A^{ad}_2},\Gamma}$ in subdomain $\Omega_i$, respectively.

\subsection{A partial assembled finite element space and norms}
We define a partially sub-assembled space $\widetilde\Lambdae$ as
\begin{align*}
\widetilde\Lambdae = \widehat\Lambdae_{\Pi} \bigoplus \left( \prod_{i=1}^N \left( \Lambdae^{(i)}_I \bigoplus \Lambdae^{(i)}_\Delta \right) \right).
\end{align*}
The functions in the space $\widetilde\Lambdae$ consist of a continuous primal component and a generally discontinuous dual component. 
Moreover, we have $\Lambdae \subset \widetilde\Lambdae$.

Let $\widetilde{\B},\widetilde{\Z},\widetilde\A$ be the partially sub-assembled matrices corresponding to the bilinear forms in \eqref{bh1}, \eqref{zh2} and~\eqref{ah4}, respectively.
Recall $\L$ defined in~\eqref{defL}, let $\widetilde\L$ be the partially sub-assembled matrix corresponding to $\L$ and $\widetilde{\R}$ be the injection vector from $\Lambdae$ to $\widetilde\Lambdae$. Then, we have  $
\B=\widetilde\R^T\widetilde\B\widetilde\R,\Z=\widetilde\R^T\widetilde\Z\widetilde\R,{\A}={\widetilde\R}^T\widetilde\A\widetilde\R,{\L}=\widetilde\R^T\widetilde\L\widetilde\R$,
where $\B,\Z,\A$ are defined in~\eqref{BZdef} and~\eqref{Ab}.
$\forall \lambdae,{\s}\in\widetilde{\Lambdae}$, we define the following {bilinear forms}
\begin{equation}\label{sumoflocalbl}
\begin{aligned}
&\langle\lambdae,{\s}\rangle_{\widetilde{\B}}={\s}^T\widetilde{\B}\lambdae=\sum\limits_{i=1}^N{\b}_h(\lambdae^{(i)}, \s^{(i)}), \quad\langle\lambdae,{\s}\rangle_{\widetilde{\bm Z}}={\s}^T\widetilde{\bm Z}\lambdae=\sum\limits_{i=1}^N{\z}_h(\lambdae^{(i)}, \s^{(i)}),\\
&\langle\lambdae,{\s}\rangle_{\widetilde{\A}}=\s^T\widetilde\A\lambdae=\sum\limits_{i=1}^N{\a}_h(\lambdae^{(i)}, \s^{(i)}),\\ 
&{{\langle\lambdae,{\s}\rangle_{\widetilde{\L}}={\s}^T\widetilde{\L}{\lambdae}=(U_{A_1}\lambdae,U_{A_1}\s)_{\T_h}+(U_{A_2}\lambdae,U_{A_2}\s)_{\T_h}.}}
\end{aligned}
\end{equation}

Given the definition $\cB^{(i)}$ in \eqref{cBi} in each subdomain, for any $\lambdae,\s\in\widetilde\Lambdae$, we can define the following bilinear form 
\begin{align}
\widetilde\cB\big({\bf E}_{\C}(\lambdae);{\bf E}_{\C}(\s)\big)&=\sum\limits_{i=1}^N\cB^{(i)}\big({\bf E}_{\C}(\lambdae^{(i)});{\bf E}_{\C}(\s^{(i)})\big)={\bf E}^T_{\C}(\s)\Aa {\bf E}_{\C}(\lambdae)\label{cB}\\
&= {\bf E}^T_{\C}(\s) \begin{bmatrix}
\beta^{\frac12}{\Aadt_1}& -{L}\\
 {L} & \beta^{\frac12}{\Aadt_2}
 \end{bmatrix}{\bf E}_{\C}(\lambdae),\nonumber\\
\widetilde\cB^{\ad}\big({\bf E}_{\C}(\lambdae);{\bf E}_{\C}(\s)\big)
&={\bf E}^T_{\C}(\s)\Aad{\bf E}_{\C}(\lambdae),\label{cBad}
 \end{align}
where $\Aa, \Aadt_1, \Aadt_2$ are obtained by assembling the local matrices $\Aai,{\Aadt_1}^{(i)},{\Aadt_2}^{(i)}$ in~\eqref{cBi} on each subdomain respectively.
Here, the matrices $\Aa, \Aadt_1, \Aadt_2,\Aad$ have been adjusted by the Robin boundary conditions.
 
Let
\begin{equation}\label{Mdeft}
\M=\B+\L,\quad\widetilde\M=\widetilde\B+\widetilde\L,
\end{equation}
by using the optimal control extension in~\eqref{lambdaIdef}, we can define the following bilinear forms
\begin{align}
\forall \lambdae_\Gamma,{\s}_\Gamma\in\widehat{\Lambdae}_\Gamma,\quad&\langle\lambdae_\Gamma,{\s}_\Gamma\rangle_{{\B}_\Gamma}
= \s^T_{\A,\Gamma} {\bf B} {\lambdae}_{\A,\Gamma},\langle\lambdae_\Gamma,{\s}_\Gamma\rangle_{{\bf Z}_\Gamma}
= \s^T_{\A,\Gamma} {\bf Z}{\lambdae}_{\A,\Gamma},\label{lambdauBhat} \\
&\langle\lambdae_\Gamma,{\s}_\Gamma\rangle_{{\bf L}_\Gamma}
= {\s}^T_{\A,\Gamma}{\L}\lambdae_{\A,\Gamma},
\langle\lambdae_\Gamma,{\s}_\Gamma\rangle_{{\bf M}_\Gamma}
= {\s}^T_{\A,\Gamma}{\M}\lambdae_{\A,\Gamma};\notag\\
\forall \lambdae_\Gamma,{\s}_\Gamma\in\widetilde{\Lambdae}_\Gamma,\quad&\langle\lambdae_\Gamma,{\s}_\Gamma\rangle_{\widetilde{\B}_\Gamma}
= \s^T_{\A,\Gamma}\widetilde{\bf B} {\lambdae}_{\A,\Gamma},\langle\lambdae_\Gamma,{\s}_\Gamma\rangle_{\widetilde{\bf Z}_\Gamma}
= \s^T_{\A,\Gamma}\widetilde{\bf Z}{\lambdae}_{\A,\Gamma},\label{lambdauBwilde} \\
&\langle\lambdae_\Gamma,{\s}_\Gamma\rangle_{\widetilde{\bf L}_\Gamma}
= {\s}^T_{\A,\Gamma}\widetilde{\L}\lambdae_{\A,\Gamma},\langle\lambdae_\Gamma,{\s}_\Gamma\rangle_{\widetilde{\bf M}_\Gamma}
= {\s}^T_{\A,\Gamma}\widetilde{\M}\lambdae_{\A,\Gamma}.\notag
\end{align}
By~\eqref{sumhat},~\eqref{defL},~\eqref{Bhai},~\eqref{sumoflocalbl},~\eqref{cB}, ~\eqref{Mdeft},~\eqref{lambdauBhat} and~\eqref{lambdauBwilde}, we obtain the following lemma.
\begin{lemma}\label{MSrelation}
Given $\lambdae_\Gamma,{\s}_\Gamma\in\widetilde{\Lambdae}_\Gamma$, we have the following relations
\begin{align}
\langle\lambdae_\Gamma,{\s}_\Gamma\rangle_{\widetilde{\S}_\Gamma}&=\l\lambdae_{\A,\Gamma},\s_{\A,\Gamma}\r_{\widetilde\A}=
\langle\lambdae_\Gamma,{\s}_\Gamma\rangle_{\widetilde{\B}_\Gamma}+
\langle\lambdae_\Gamma,{\s}_\Gamma\rangle_{\widetilde{\Z}_\Gamma}.\label{eqlambdaAu}
\end{align}
When $\lambdae_\Gamma={\s}_\Gamma$,
$
\langle\lambdae_\Gamma,\lambdae_\Gamma\rangle_{\widetilde{\bf Z}_\Gamma}=0,
\langle\lambdae_\Gamma,\lambdae_\Gamma\rangle_{\widetilde{\bf S}_\Gamma}=
\l\lambdae_{\A,\Gamma},\lambdae_{\A,\Gamma}\r_{\widetilde{\A}}
={\bf E}^T_{\C}(\lambdae_{\A,\Gamma})\Aa{\bf E}_{\C}(\lambdae_{\A,\Gamma})=\l\lambdae_{\A,\Gamma},\lambdae_{\A,\Gamma}\r_{\widetilde{\B}}=\langle\lambdae_\Gamma,\lambdae_\Gamma\rangle_{\widetilde{\bf B}_\Gamma},
$
and
$\langle\lambdae_\Gamma,\lambdae_\Gamma\rangle_{\widetilde{\bf M}_\Gamma}=\langle\lambdae_\Gamma,\lambdae_\Gamma\rangle_{\widetilde{\bf B}_\Gamma}+\langle\lambdae_\Gamma,\lambdae_\Gamma\rangle_{\widetilde{\bf L}_\Gamma}
=\langle\lambdae_\Gamma,\lambdae_\Gamma\rangle_{\widetilde{\bf S}_\Gamma}+\langle\lambdae_\Gamma,\lambdae_\Gamma\rangle_{\widetilde{\bf L}_\Gamma}.
$
The same results also hold for $\lambdae,\s\in \widehat{\Lambdae}_\Gamma$ and their corresponding bilinear forms.
\end{lemma}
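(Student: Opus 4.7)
The plan is to prove Lemma~\ref{MSrelation} as a largely bookkeeping exercise that unpacks the Schur-complement formula and then exploits the skew-symmetric structure of the nonsymmetric part $\widetilde{\Z}$. First I would establish the leftmost identity $\l\lambdae_\Gamma,\s_\Gamma\r_{\widetilde{\S}_\Gamma}=\l\lambdae_{\A,\Gamma},\s_{\A,\Gamma}\r_{\widetilde{\A}}$ by unfolding the definition of the partially sub-assembled Schur complement $\widetilde{\S}_\Gamma=\overline{\R}^T_\Gamma \S_\Gamma \overline{\R}_\Gamma$ from~\eqref{Sgamma}. Because $\lambdae_{\A,\Gamma}$ is, by construction~\eqref{lambdaIdef}, the discrete harmonic-type extension of $\lambdae_\Gamma$ obtained from~\eqref{Asubdomain} with zero interior load (so that $\A^{(i)}_{II}\lambdae^{(i)}_I+\A^{(i)}_{I\Gamma}\lambdae^{(i)}_\Gamma=0$), the standard static-condensation identity applied on each subdomain $\Omega_i$ gives $\s^{(i)T}_\Gamma\,\S^{(i)}_\Gamma\,\lambdae^{(i)}_\Gamma=\s^{(i)T}_{\A,\Gamma}\,\A^{(i)}\,\lambdae^{(i)}_{\A,\Gamma}$; the derivation does not require symmetry of $\A^{(i)}$, only that the interior block equation is satisfied by one of the two arguments. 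Summing over $i$ via~\eqref{sumoflocalbl}, and invoking the splitting $\widetilde\A=\widetilde\B+\widetilde\Z$ from~\eqref{BZdef} together with~\eqref{lambdauBwilde}, immediately yields~\eqref{eqlambdaAu}.

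For the diagonal case $\s_\Gamma=\lambdae_\Gamma$, the crux is the vanishing $\l\lambdae_\Gamma,\lambdae_\Gamma\r_{\widetilde{\Z}_\Gamma}=0$, which I would obtain by inspecting each summand of $\z_h$ in~\eqref{zh2}. The convective bulk pair $\tfrac{\beta^{1/2}}{2}(U_{A_1}\s^{(i)},\bz\cdot\nabla U_{A_1}\lambdae^{(i)})-\tfrac{\beta^{1/2}}{2}(U_{A_1}\lambdae^{(i)},\bz\cdot\nabla U_{A_1}\s^{(i)})$, the analogous pair for $U_{A_2}$, the two boundary pairs involving $\bz\cdot\bn$, and the cross-coupling pair $-(U_{A_1}\s^{(i)},U_{A_2}\lambdae^{(i)})+(U_{A_1}\lambdae^{(i)},U_{A_2}\s^{(i)})$ are each manifestly skew-symmetric under the swap $\lambdae^{(i)}\leftrightarrow\s^{(i)}$; setting $\s^{(i)}=\lambdae^{(i)}$ collapses every such pair to zero. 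Summing over subdomains gives $\l\lambdae_\Gamma,\lambdae_\Gamma\r_{\widetilde{\Z}_\Gamma}=0$, and combining with the first step yields $\l\lambdae_\Gamma,\lambdae_\Gamma\r_{\widetilde{\S}_\Gamma}=\l\lambdae_\Gamma,\lambdae_\Gamma\r_{\widetilde{\B}_\Gamma}$. The middle equality $\l\lambdae_{\A,\Gamma},\lambdae_{\A,\Gamma}\r_{\widetilde{\A}}={\bf E}^T_{\C}(\lambdae_{\A,\Gamma})\Aa{\bf E}_{\C}(\lambdae_{\A,\Gamma})$ is then immediate from the assembly formula~\eqref{cB}.

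The final chain for the $\M$-norm follows directly from $\widetilde\M=\widetilde\B+\widetilde\L$ in~\eqref{Mdeft} and the definitions in~\eqref{lambdauBwilde}, combined with the just-established identity $\l\lambdae_\Gamma,\lambdae_\Gamma\r_{\widetilde{\B}_\Gamma}=\l\lambdae_\Gamma,\lambdae_\Gamma\r_{\widetilde{\S}_\Gamma}$. The parallel assertions for $\lambdae_\Gamma,\s_\Gamma\in\widehat{\Lambdae}_\Gamma$ are proved verbatim, with the fully assembled matrices $\A,\B,\Z,\L,\M$ and the global Schur complement $\widehat{\S}_\Gamma$ in~\eqref{Shat_Gamma} replacing their partially sub-assembled counterparts and $\R^{(i)}_\Gamma$ taking the role of the restriction operators that define $\overline{\R}_\Gamma$.

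I do not anticipate any serious obstacle: once the static-condensation step and the termwise skew-symmetry of $\z_h$ are in hand, the rest is strictly definitional. The only point demanding genuine care is confirming that the two cross terms $\pm(U_{A_1}\cdot,U_{A_2}\cdot)$ in~\eqref{zh2}, which originate from the $L^2$ coupling between the state and adjoint equations, carry opposite signs so that they cancel rather than reinforce when $\s=\lambdae$; this is precisely the structural feature that distinguishes the present analysis from the pure advection-diffusion setting and makes the Schur complement's symmetric part coincide with the $\widetilde{\B}_\Gamma$-form.
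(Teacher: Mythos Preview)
Your proposal is correct and matches the paper's approach, which in fact offers no proof beyond the one-line remark ``By~\eqref{sumhat},~\eqref{defL},~\eqref{Bhai},~\eqref{sumoflocalbl},~\eqref{cB},~\eqref{Mdeft},~\eqref{lambdauBhat} and~\eqref{lambdauBwilde}, we obtain the following lemma.'' Your static-condensation identity for~\eqref{eqlambdaAu} and your termwise verification that $\z_h$ in~\eqref{zh2} is skew-symmetric are exactly the definitional unpacking that the paper leaves implicit; the only minor simplification available is that skew-symmetry of $\widetilde{\Z}$ can also be read directly from the splitting $\Z=\tfrac12(\A-\A^T)$ in~\eqref{BZdef} and its partially assembled analogue, without inspecting each term of~\eqref{zh2}.
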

\subsection{Norms}
Recall $\Aa$ defined in~\eqref{Aa} and $\La$ defined in~\eqref{Ladef}. Let $$\Ma=\Aa+\La.$$
For any $ \lambdae = (\lambda, \mu)^T \in \widetilde\Lambdae $, we can define the following norms
\begin{align}
\|{\bf E}_{\C}(\lambdae)\|^2_{\Aa}&={\bf E}^T_{\C}(\lambdae)\Aa{\bf E}_{\C}(\lambdae)\label{defAanorm}\\
&= \beta^{\frac12}\|Q_{A_1}\lambdae\|^2_{\T_h}+\beta^{\frac12}\big\||\tau_1-\frac12{\bf\zeta}\cdot{\bf n}|^{\frac12}(U_{A_1}\lambdae-\lambda)\big\|^2_{\partial\T_h}\notag\\
&\quad+\beta^{\frac12}\|Q_{A_2}\lambdae\|^2_{\T_h}+ \beta^{\frac12}\big\||\tau_2+\frac12{\bf\zeta}\cdot{\bf n}|^{\frac12}(U_{A_2}\lambdae-\mu)\big\|^2_{\partial\T_h},\notag \\
\|{\bf U}\lambdae\|^2_{\T_h}&=\|{U}_{A_1}\lambdae\|^2_{\T_h}+\|{U}_{A_2}\lambdae\|^2_{\T_h}={\bf E}^T_{\C}(\lambdae)\La{\bf E}_{\C}(\lambdae),\label{eqUnorm}\\
\|{\bf E}_{\C}(\lambdae)\|^2_{\Ma}&={\bf E}^T_{\C}(\lambdae)\Ma{\bf E}_{\C}(\lambdae)=\|{\bf E}_{\C}(\lambdae)\|^2_{\Aa}+\|{\bf U}\lambdae\|^2_{\T_h}.
\label{eq:Manorm}
\end{align}
By Lemma~\ref{MSrelation} and ~\eqref{eq:Manorm}, we have
\begin{align}\label{MaMGamma}
\forall\lambdae_\Gamma\in\widetilde\Lambdae_\Gamma, \|\lambdae_\Gamma\|^2_{\widetilde\M_\Gamma}=\|{\bf E}_{\C}(\lambdae_{\A,\Gamma})\|^2_{\Ma},\quad
\forall\lambdae_\Gamma\in\widehat\Lambdae_\Gamma, \|\lambdae_\Gamma\|^2_{\M_\Gamma}=\|{\bf E}_{\C}(\lambdae_{\A,\Gamma})\|^2_{\Ma}.
\end{align}
Additionally, by~\eqref{cBad}, we define the norms associated with ${\bf E}_{\Aad}(\lambdae)$ defined in~\eqref{defEAd}  as follows:
\begin{align}
\|{\bf E}_{\A^{\bf ad}}(\lambdae)\|^2_{\Aad}&={\bf E}^T_{\Aad}(\lambdae)\Aad{\bf E}_{\Aad}(\lambdae)\label{Madefinition}\\
&= \|Q_{A^{ad}_1}\lambda\|^2_{\T_h}+\||\tau_1-\frac12{\bf\zeta}\cdot{\bf n}|^{\frac12}(U_{A^{ad}_1}\lambda-\lambda)\|^2_{\partial\T_h}\notag\\
&\quad+\|Q_{A^{ad}_2}\mu\|^2_{\T_h}+\beta^{\frac12}\big\||\tau_2+\frac12{\bf\zeta}\cdot{\bf n}|^{\frac12}(U_{A^{ad}_2}\mu-\mu)\big\|^2_{\partial\T_h},\notag\\
\|{\bf U}_{\Aad}(\lambdae)\|^2_{\T_h}&=\|{U}_{A^{ad}_1}\lambda\|^2_{\T_h}+\|{U}_{A^{ad}_2}\mu\|^2_{\T_h}.
\end{align}
Moreover,  we introduce several useful norms as defined in \cite{TZAD2021}, for any domain $D$ and $\lambdae=(\lambda,\mu)^T\in\widetilde\Lambdae(D)$, suppose $h$ is the diameter of $D$, then 

\begin{equation}
\|\lambda\|^2_{h,D} := \sum\limits_{K \in \T_h(D)} \|\lambda\|^2_{\partial K}\frac{|D|}{|\partial D|},
\label{eq:norm1}
\end{equation}
\begin{equation}
\interleave\lambda\interleave^2_{D} := \sum\limits_{K \in \T_h(D)} \|\lambda - m_K(\lambda)\|^2_{\partial K} \left(\frac{|D|}{|\partial D|}\right)^{-1},\text{ where } m_K(\lambda)=\frac1{|\partial K|}\int_{\partial K}\lambda ds.
\label{eq:norm2}
\end{equation}
For the domain $\Omega$, we simplify the notation by omitting the subscript.  Specifically, we use $\|\cdot\|_h$ and $\interleave\cdot\interleave$ to denote $\|\cdot\|_{h,\Omega}$ and $\interleave\cdot\interleave_{\Omega}$ respectively. Note that  in this paper, the norm of $\lambdae=(\lambda,\mu)^T$, $\|\lambdae\|_{h,D}$, $\interleave\lambdae\interleave_{h,D}$
and  $\|\lambdae\|_{\partial D}$, are defined as follows: 
\begin{align*}
\|\lambdae\|_{h,D}:=(\|\lambda\|^2_{h,D}+\|\mu\|^2_{h,D})^{\frac12},\quad \interleave\lambdae \interleave_{D}:=( \interleave\lambda \interleave^2_{D}+ \interleave\mu \interleave^2_{D})^{\frac12},\quad \|\lambdae\|_{\partial D}:=(\|\lambda\|^2_{\partial D}+\|\mu\|^2_{\partial D})^{\frac12}.
\end{align*}
\subsection{Some norm estimates}
\begin{lemma}\cite[Lemma 7.8]{TZAD2021}\label{citeadlemma7.8}
If $\mu \in \Lambda^{(i)}$ and $\mu$ is edge average zero on $\partial \Omega_i$, then
\[
\|\mu\|_{h,\Omega_i} \leq C H \interleave \mu \interleave_{\Omega_i}.
\]
\end{lemma}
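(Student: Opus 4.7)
The plan is to prove this as a discrete Poincar\'e-type inequality on the HDG trace space restricted to $\Omega_i$, where the vanishing edge averages on $\partial\Omega_i$ supply the boundary condition needed to eliminate the constant kernel.

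First, I would split $\mu$ on each element boundary via the $L^2(\partial K)$-orthogonal projection onto constants: $\mu|_{\partial K}=(\mu-m_K(\mu))+m_K(\mu)$, which yields
\[
\|\mu\|_{\partial K}^2 = \|\mu-m_K(\mu)\|_{\partial K}^2 + |\partial K|\,m_K(\mu)^2.
\]
Summing over $K\in\T_h(\Omega_i)$ and inserting the weight from the definition of $\|\cdot\|_{h,\Omega_i}$, the fluctuation contribution already matches the triple-bar norm and is controlled by $C H^2\interleave\mu\interleave^2_{\Omega_i}$ directly from the definition. The remaining and nontrivial task is to bound the element-average contribution $\sum_K |\partial K|\,m_K(\mu)^2$ against the same triple-bar norm.

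For this, I would view $\bar\mu$ as the broken piecewise constant on $\Omega_i$ defined by $\bar\mu|_K = m_K(\mu)$, and invoke a discrete Poincar\'e inequality with a combined interior-jump plus boundary-trace seminorm,
\[
\|\bar\mu\|_{L^2(\Omega_i)}^2 \leq C H^2\Big(\sum_{e\in\mathcal{E}_h^{\mathrm{int}}(\Omega_i)} h_e^{-1}\|[\bar\mu]\|_e^2 + \sum_{e\subset\partial\Omega_i} h_e^{-1}\|\bar\mu\|_e^2\Big),
\]
with constant depending only on shape regularity. The boundary-trace term is essential, since constants have vanishing interior jumps; once a $P^0$ function has vanishing jumps and vanishing boundary trace, it must be identically zero, which yields the Poincar\'e inequality by a standard compactness/scaling argument on a reference subdomain. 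Then I would relate the seminorm to the oscillations $\mu-m_K(\mu)$: since $\mu$ is single-valued on each edge $e\subset\mathcal{E}_h$, the edge average $m_e(\mu)$ is well defined, and Cauchy--Schwarz applied to $m_K(\mu)-m_e(\mu)=\tfrac{1}{|e|}\int_e(m_K(\mu)-\mu)$ gives
\[
|e|\,(m_K(\mu)-m_e(\mu))^2 \leq \|\mu-m_K(\mu)\|_e^{\,2}.
\]
Interior jumps split by the triangle inequality into two one-sided differences with $m_e(\mu)$; on boundary edges the hypothesis $m_e(\mu)=0$ converts $\bar\mu|_K=m_K(\mu)$ into $m_K(\mu)-m_e(\mu)$ and the same estimate applies. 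By shape regularity each element boundary is charged at most $O(1)$ times, so summing gives seminorm $\lesssim \sum_K h_K^{-1}\|\mu-m_K(\mu)\|_{\partial K}^2 \sim \interleave\mu\interleave_{\Omega_i}^2$, and combining with the Poincar\'e step recovers the claim.

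The main obstacle is the discrete Poincar\'e inequality in the middle step: one must verify carefully that the boundary-trace contribution eliminates the constant kernel of the jump seminorm and that the resulting constant is independent of the ratio $H/h$. A minor but genuine bookkeeping difficulty is the scaling mismatch between $|K|/|\partial K|\sim h_K$ (used to convert $\sum_K|\partial K|\,m_K(\mu)^2$ into $\|\bar\mu\|^2_{L^2(\Omega_i)}$) and the global weight $|\Omega_i|/|\partial\Omega_i|\sim H$ in $\|\cdot\|_{h,\Omega_i}$; tracking these factors precisely is what produces the correct $H$ in front of $\interleave\mu\interleave_{\Omega_i}$ rather than a weaker or stronger power.
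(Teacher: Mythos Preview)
The paper does not supply its own proof of this lemma; it is quoted verbatim from \cite[Lemma~7.8]{TZAD2021} and used as a black box. So there is no in-paper argument to compare against.

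Your proposal is sound and is essentially the standard route for this kind of discrete Poincar\'e inequality on the HDG trace space (as in \cite{CDGT2014,TZAD2021}). The decomposition $\mu=(\mu-m_K(\mu))+m_K(\mu)$, the trivial bound on the oscillation part, and the reduction of the piecewise-constant part $\bar\mu$ to a broken Poincar\'e inequality whose seminorm is then controlled via $|m_K(\mu)-m_e(\mu)|^2|e|\le\|\mu-m_K(\mu)\|_e^2$ are exactly the expected ingredients. Your use of the zero edge-average hypothesis on $\partial\Omega_i$ to convert the boundary-trace term into an oscillation term is the right mechanism for killing the constant kernel, and your scaling bookkeeping is correct: $h_K|\partial K|\sim|K|$ turns $\sum_K h_K|\partial K|\,m_K(\mu)^2$ into $\|\bar\mu\|_{L^2(\Omega_i)}^2$, and the subdomain diameter $H$ enters through the Poincar\'e constant. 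The one point worth writing out carefully is that the discrete Poincar\'e inequality for $P^0$ functions with the combined jump/boundary seminorm holds with constant $CH$ independent of $H/h$; this follows by a scaling argument to a reference subdomain together with the observation that the seminorm is a norm on $P^0(\T_h)$ once boundary traces are included, but it deserves an explicit reference or a short self-contained argument.
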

\begin{lemma}\cite[Lemma 7.10]{TZAD2021}\label{citeadlemma7.10}
If $\mu \in \widetilde{\Lambda}$, then
\[
\|\mu\|_h \leq C \, \interleave \mu \interleave.
\]
\end{lemma}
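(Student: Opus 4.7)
The plan is to reduce the global estimate on $\Omega$ to the subdomain-level inequality in Lemma~\ref{citeadlemma7.8} by splitting $\mu$ on each subdomain into an edge-average zero part and a constant, and then to control those constants using the primal continuity built into $\widetilde\Lambda$ together with the homogeneous boundary condition $\mu=0$ on $\partial\Omega$ inherited from $\Lambda$. The key observation is that the triple-bar norm is blind to constants while the $\|\cdot\|_h$-norm is not, so the argument must separate these two effects and then glue them back together.

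First, on each subdomain $\Omega_i$ I would write $\mu^{(i)}=\widetilde\mu^{(i)}+c_i$, where $c_i$ is an appropriate subdomain-level average (for instance the value at a primal dof on $\partial\Omega_i$) and $\widetilde\mu^{(i)}$ has edge-average zero on $\partial\Omega_i$. Because $\interleave\cdot\interleave_{\Omega_i}$ involves only $\mu-m_K(\mu)$ element by element, it is invariant under the addition of constants, so $\interleave\widetilde\mu^{(i)}\interleave_{\Omega_i}=\interleave\mu^{(i)}\interleave_{\Omega_i}$, and Lemma~\ref{citeadlemma7.8} immediately yields
\[
\|\widetilde\mu^{(i)}\|_{h,\Omega_i}\le C H_i\interleave\mu^{(i)}\interleave_{\Omega_i}.
\]
Next I would control the constants $c_i$. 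The primal continuity of $\widetilde\Lambda$ forces the values of $\mu$ to agree across subdomain interfaces at the primal dofs, so for two adjacent subdomains $\Omega_i,\Omega_j$ sharing such a dof the difference $|c_i-c_j|$ is bounded by a combination of $\interleave\mu\interleave_{\Omega_i}$ and $\interleave\mu\interleave_{\Omega_j}$ once the mean-zero parts have been absorbed using a trace inequality. Since $\mu=0$ on $\partial\Omega$, any subdomain touching the outer boundary has $|c_i|$ controlled directly, and propagating along the finite primal-connectivity graph produces $|c_i|\le C\,\interleave\mu\interleave$ for every $i$. The two estimates are then combined via a triangle inequality in $\|\cdot\|_{h,\Omega_i}$, summed over $i$, and rescaled from the subdomain weight $|\Omega_i|/|\partial\Omega_i|$ to the global weight $|\Omega|/|\partial\Omega|$ to assemble the desired inequality $\|\mu\|_h\le C\interleave\mu\interleave$.

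The main obstacle is the second step: propagating control of the subdomain constants $c_i$ through the primal-connectivity graph while keeping $C$ independent of both the subdomain size $H$ and the number of subdomains. This requires identifying precisely which primal dofs in $\widehat\Lambdae_\Pi$ connect each pair of neighboring subdomains, verifying that every subdomain is linked to $\partial\Omega$ through a chain of bounded combinatorial length (in a shape-regular decomposition), and proving the associated discrete Poincaré-type inequality along the chain. The bookkeeping of the scalings $|\Omega_i|/|\partial\Omega_i|\sim H_i$ versus $|\Omega|/|\partial\Omega|\sim 1$ and of the trace constants accumulated along the propagation is routine but must be carried out carefully so that the final constant depends only on the mesh shape regularity and the primal-set structure, not on $H$ or $h$.
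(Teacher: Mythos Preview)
The paper does not prove this lemma; it simply quotes it from \cite[Lemma~7.10]{TZAD2021}. So there is no in-paper argument to compare against, and your proposal is effectively an attempted reconstruction of the proof in the cited reference.

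Your overall strategy---apply the subdomain Poincar\'e inequality (Lemma~\ref{citeadlemma7.8}) to a mean-free part and then propagate control of the subdomain constants $c_i$ through the primal-connectivity graph, anchored at $\partial\Omega$---is the right shape for a discrete Poincar\'e inequality on a partially assembled space. However, two of your claims do not hold as written. First, subtracting a \emph{single} constant $c_i$ from $\mu^{(i)}$ cannot produce a function with zero edge average on \emph{every} coarse edge of $\partial\Omega_i$, which is the hypothesis under which Lemma~\ref{citeadlemma7.8} is invoked elsewhere in the paper (e.g.\ in \eqref{vadhH}); at best $\widetilde\mu^{(i)}$ has zero average on one chosen edge, so you must either verify that Lemma~\ref{citeadlemma7.8} still holds under that weaker hypothesis or use a different decomposition.

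Second, and more seriously, your assertion that every subdomain is linked to $\partial\Omega$ by ``a chain of bounded combinatorial length (in a shape-regular decomposition)'' is false: with $N\sim H^{-2}$ subdomains the chain length from an interior subdomain to $\partial\Omega$ is generically $O(H^{-1})$, not $O(1)$. A naive telescoping bound $|c_i|\le\sum_k|c_k-c_{k+1}|$ along such a chain therefore picks up a factor that is not uniform in $H$. The argument can be salvaged, but not via bounded chain length: one has to use that $\|c_i\|_{h,\Omega_i}^2\sim|c_i|^2 H^2$ and carry out a careful double-counting of how many chains traverse each subdomain, so that the $H^2$ volume factor cancels the $O(H^{-2})$ coming from (chain length)$\times$(multiplicity). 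That counting is the genuine content of the proof and is not the ``routine bookkeeping'' you describe.
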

From the definitions of $\|\cdot\|_h$ and $\interleave\cdot\interleave$, we see that 
\begin{lemma}\label{normtriplebarhnorm}
If $\mu \in \widetilde{\Lambda}$, then
\[
\interleave\mu\interleave\leq Ch^{-1}\|\mu\|_{h}.
\]
\end{lemma}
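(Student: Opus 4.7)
The plan is to obtain the bound directly from the definitions~\eqref{eq:norm1} and~\eqref{eq:norm2}, using the optimality of $m_K(\mu)$ as the best constant $L^2(\partial K)$-approximation. Recall that $m_K(\mu) = \frac{1}{|\partial K|}\int_{\partial K} \mu\,ds$ is the orthogonal projection of $\mu$ onto the subspace of constants in $L^2(\partial K)$, so by the Pythagorean identity
\[
\|\mu - m_K(\mu)\|^2_{\partial K} + \|m_K(\mu)\|^2_{\partial K} = \|\mu\|^2_{\partial K},
\]
hence $\|\mu - m_K(\mu)\|^2_{\partial K} \le \|\mu\|^2_{\partial K}$ for each element $K \in \T_h$. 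This step only uses that the subtracted quantity is an $L^2(\partial K)$ projection; no regularity or boundary condition on $\mu$ is needed.

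Next, I would sum this inequality over all $K$ and plug it into~\eqref{eq:norm2} to get
\[
\interleave\mu\interleave^2 \le \left(\frac{|\Omega|}{|\partial \Omega|}\right)^{-1}\sum_{K \in \T_h} \|\mu\|^2_{\partial K} = \left(\frac{|\Omega|}{|\partial \Omega|}\right)^{-2} \|\mu\|_h^2,
\]
where in the last step I recover $\|\mu\|_h^2$ by multiplying and dividing by $|\Omega|/|\partial\Omega|$ according to~\eqref{eq:norm1}. Since $\Omega$ is a fixed bounded convex polygonal domain, $|\Omega|/|\partial \Omega|$ is a positive constant independent of $h$, so this yields $\interleave\mu\interleave \le C\|\mu\|_h$ immediately. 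The stated bound $\interleave\mu\interleave \le Ch^{-1}\|\mu\|_h$ then follows at once, since $h \le \mathrm{diam}(\Omega)$ is bounded above and therefore $1 \le \mathrm{diam}(\Omega)\, h^{-1}$, allowing the constant to be enlarged accordingly.

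I do not expect a genuine obstacle here; the inequality is essentially Bessel's inequality on $\partial K$ combined with the scaling built into the two norms, which matches the authors' remark that the conclusion follows \emph{from the definitions}. The only minor bookkeeping is tracking the factor $|\Omega|/|\partial \Omega|$, which appears with reciprocal powers in the two norms and contributes only an $O(1)$ constant for fixed $\Omega$. The slack factor $h^{-1}$ in the stated bound is not tight under the definitions as written, but it is the form that will be convenient when this estimate is combined with other norm comparisons (such as Lemmas~\ref{citeadlemma7.8} and~\ref{citeadlemma7.10}) in the convergence analysis of Section~\ref{sec:conv}.
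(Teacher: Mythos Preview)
Your approach is correct and matches the paper's: the lemma follows directly from the definitions together with the projection inequality $\|\mu - m_K(\mu)\|_{\partial K}^2 \le \|\mu\|_{\partial K}^2$, which is exactly what you use.

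One point of bookkeeping deserves care. You read the weight $|D|/|\partial D|$ in \eqref{eq:norm1}--\eqref{eq:norm2} as the fixed constant $|\Omega|/|\partial\Omega|$, which leads you to conclude $\interleave\mu\interleave \le C\|\mu\|_h$ with the $h^{-1}$ merely slack. However, the intended meaning (consistent with \cite{TZAD2021} and with the paper's own use of $\|\cdot\|_{h,K}$ and $\interleave\cdot\interleave_K$ for a single element $K$, where the paper explicitly says ``suppose $h$ is the diameter of $D$'') is that the weight is taken \emph{element by element}: $\|\mu\|_h^2 = \sum_K \tfrac{|K|}{|\partial K|}\|\mu\|_{\partial K}^2$ and $\interleave\mu\interleave^2 = \sum_K \tfrac{|\partial K|}{|K|}\|\mu - m_K(\mu)\|_{\partial K}^2$. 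Under that reading, shape regularity and quasi-uniformity give $|\partial K|/|K| \le Ch^{-1}$, and your projection step yields
\[
\interleave\mu\interleave^2 \le \sum_K \frac{|\partial K|}{|K|}\|\mu\|_{\partial K}^2 \le Ch^{-2}\sum_K \frac{|K|}{|\partial K|}\|\mu\|_{\partial K}^2 = Ch^{-2}\|\mu\|_h^2,
\]
so the $h^{-1}$ factor is sharp rather than slack. The argument is otherwise identical to yours.
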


For any $\lambdae=(\lambda,\mu)^T\in\widetilde\Lambdae$, by 
\cite[Lemma 7.2]{TZAD2021} with $\epsilon=1$, 
 we have 
{
\begin{equation}\label{UA1L2}
\begin{aligned}
&\|U_{A^{ad}_1}\lambda\|_{K} \leq C\|\lambda\|_{h,K}.
\end{aligned}
\end{equation}
 Replacing $\bz$ with $-\bz$, $\tau_1$ with $\tau_2$ and by similar proofs as \cite[lemma 7.2]{TZAD2021}, we obtain
\begin{equation}\label{UA2L2}
\begin{aligned}
&\|U_{A^{ad}_2}\mu\|_{K}\leq C\|\mu\|_{h,K}.
\end{aligned}
\end{equation}}
Also, for $\lambdae\in\widetilde\Lambdae$,
by \cite[Lemma 7.6]{TZAD2021} we have
\begin{equation}\label{ADestimateE}
c\interleave\lambdae\interleave\leq \|{\bf E}_{\A^{\bf ad}}(\lambdae)\|_{\Aad}\leq C\interleave\lambdae\interleave,
\end{equation}
where $c$ and $C$ are two constants such that $c\leq C$.

By~\cite[Lemma 7.3, Lemma 7.4]{TZAD2021} with $\epsilon=1$, we obtain the following lemma.
\begin{lemma}
For $\lambda$ defined on $\partial K$, we have 
\begin{align}
&\|\nabla U_{A^{ad}_1}\lambda\|_K\leq C\interleave\lambda\interleave_K, \|\nabla U_{A^{ad}_2}\lambda\|_K\leq C\interleave\lambda\interleave_K\label{gradientUestimate},\\
&\|U_{A^{ad}_1}\lambda-\lambda\|_{\partial K}\leq Ch^{\frac12}\interleave\lambda\interleave_K, 
\|U_{A^{ad}_2}\lambda-\lambda\|_{\partial K}\leq Ch^{\frac12}\interleave\lambda\interleave_K.\label{Uminuslambdaestimate}
\end{align}
\end{lemma}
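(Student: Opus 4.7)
The plan is to invoke \cite[Lemma 7.3, Lemma 7.4]{TZAD2021} directly, with only a bookkeeping argument to handle the second pair of estimates. First, I would observe that the extensions $(Q_{A^{ad}_1}\lambda, U_{A^{ad}_1}\lambda)$ are defined via the local advection-diffusion HDG element solve associated with the matrix $\Aadt_1$, which is exactly the matrix appearing in \cite[Equation (2.23)]{TZAD2021} specialized to viscosity $\epsilon = 1$. Thus \cite[Lemma 7.3]{TZAD2021} applied with $\epsilon = 1$ yields $\|\nabla U_{A^{ad}_1}\lambda\|_K \le C\interleave \lambda\interleave_K$, and \cite[Lemma 7.4]{TZAD2021} with $\epsilon = 1$ yields $\|U_{A^{ad}_1}\lambda - \lambda\|_{\partial K} \le C h^{1/2}\interleave\lambda\interleave_K$. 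At this stage I would also note that the constants become $h$-independent precisely because $\epsilon = 1$, so no hidden factor of $\epsilon^{-1}$ degrades the bound.

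Next, for the estimates involving $U_{A^{ad}_2}$, I would use the structural observation already recorded after \eqref{Aa}: the matrix $\Aadt_2$ is obtained from $\Aadt_1$ by the substitution $\bz \mapsto -\bz$ and $\tau_1 \mapsto \tau_2$. The constants in \cite[Lemma 7.3, Lemma 7.4]{TZAD2021} depend only on the shape regularity of $\T_h$, on $\|\bz\|_{W^{1,\infty}(\Omega)}$ (invariant under the sign flip), and on a uniform upper bound on the stabilization parameter (which the standing assumption on $\tau_1,\tau_2$ supplies). Hence the proofs of those two lemmas carry over verbatim after the substitution, giving the remaining estimates $\|\nabla U_{A^{ad}_2}\lambda\|_K \le C\interleave\lambda\interleave_K$ and $\|U_{A^{ad}_2}\lambda - \lambda\|_{\partial K} \le Ch^{1/2}\interleave\lambda\interleave_K$.

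The only potential obstacle is checking that nothing in the proof of \cite[Lemma 7.3, Lemma 7.4]{TZAD2021} actually depends on the specific form of $\tau_1$ or on the orientation of $\bz$ (for instance through an integration by parts that would change sign). I would verify this by tracking the role of these ingredients in those arguments; since both lemmas use $\bz$ only through $|\bz\cdot\bn|$ and $\|\bz\|_{W^{1,\infty}}$, and use $\tau$ only through a uniform comparison $\tau \sim 1$, the substitution is harmless. No new estimate is needed and the statement follows.
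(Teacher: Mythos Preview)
Your proposal is correct and follows essentially the same approach as the paper, which simply cites \cite[Lemma 7.3, Lemma 7.4]{TZAD2021} with $\epsilon=1$ as the full justification. Your additional bookkeeping for the $U_{A^{ad}_2}$ case via the substitution $\bz\mapsto-\bz$, $\tau_1\mapsto\tau_2$ is implicit in the paper's convention (set up after \eqref{Aa}) and is exactly the right way to spell it out.
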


\section{The Convergence rate of the BDDC preconditioned system}\label{sec:conv}
In this section, we examine the dependence of the GMRES convergence rate on the regularization parameter $\beta$, the subdomain size $H$ and the mesh size $h$. Throughout this section, we use $C$ to denote any constants independent of these parameters. 
{{Following~\cite[A(1)-A(3)]{chen2018hdg} and~\cite[Assumption 6.1]{TZAD2021}, we make the assumptions below.}}
\begin{assumption}\label{eq:assump}
For the stabilizers $\tau_1$ and $\tau_2$, we assume
\begin{enumerate}
\item\label{assump1} $\tau_1$ is a piecewise positive constant on $\partial\T_h$ and there exists a constant $C_1$ such that $\tau_1\leq C_1$;
\item\label{assump2} $\tau_1=\tau_2+\bm{\zeta}\cdot{\bf n}$;
\item\label{assump3} $\max\limits_{\mathcal{E}\subset\partial K}\inf\limits_{x\in\mathcal{E}}(\tau_1-\frac12\bm{\zeta}\cdot{\bf n})=C_K,\quad\forall K\in\T_h$;
\item\label{assump4} $\inf\limits_{x\in\mathcal{E}}(\tau_1-\frac12\bz\cdot{\bf n})\geq C^*\max_{x\in\mathcal{E}}|\bz(x)\cdot{\bf n}|,\forall\mathcal{E}\in\partial K$ and $\forall K\in\T_h.$
\end{enumerate}
\end{assumption}

We also make the following assumption as in~\cite[Assumption 6.2]{TZAD2021}.
\begin{assumption}\label{assumpconstrain}
For each two adjacent subdomains $\Omega_i$ and $\Omega_j$, which share the edge $\mathcal{E}_{ij}$, for any $\lambdae^{(i)}\in{\Lambdae}^{(i)}$, we assume that 
\begin{align*}
\int_{\mathcal{E}_{ij}}\lambdae^{(i)} ds,
\quad  \int_{{\mathcal{E}}_{ij}}{\bm\zeta}\cdot{\bf n}\lambdae^{(i)}  ds,\quad \int_{{\mathcal{E}}_{ij}}{\bm\zeta}\cdot{\bf n}\lambdae^{(i)}s  ds,
\end{align*}
are contained in the coarse level primal subspace $\widehat{\Lambdae}_{\Pi}$, which are the same or different by a $-1$ factor due to the normal direction.
\end{assumption}

Let  ${\bf T}=\widetilde{\R}^T_{\D,\Gamma}\widetilde{\S}^{-1}_\Gamma\widetilde{\R}_{\D,\Gamma}{\S}_\Gamma$ denote the BDDC preconditioned operator.
The convergence behavior of the GMRES method applied to equation \eqref{bddc} is analyzed using the framework established in \cite[Theorem 6.5]{TZAD2021}, whose proof is detailed in \cite{eielsc}. {{Based on this result, we derive the following theorem on the convergence rate of the BDDC preconditioned GMRES iterations, which constitutes the main result of this paper.}}
{{\begin{theorem}\label{theorem:EES}
Given $\lambdae_\Gamma \in \widehat{\Lambdae}_\Gamma$, let $C_u$ and $c_l$ be the constants established in Lemma~\ref{upperbbound} and Lemma~\ref{cl}, respectively. Then, the following bounds hold:
\begin{align*}
c_l \langle \lambdae_\Gamma, \lambdae_\Gamma \rangle_{\M_\Gamma} &\leq \langle \lambdae_\Gamma, \mathbf{T} \lambdae_\Gamma \rangle_{\M_\Gamma}, \\
\langle \mathbf{T} \lambdae_\Gamma, \mathbf{T} \lambdae_\Gamma \rangle_{\M_\Gamma} &\leq C_u^2 \langle \lambdae_\Gamma, \lambdae_\Gamma \rangle_{\M_\Gamma}.
\end{align*}
As a result, the residual $\mathbf{r}_m$ of the GMRES iteration at iteration $m$ satisfies
\begin{align*}
\frac{\| \mathbf{r}_m \|_{\M_\Gamma}}{\| \mathbf{r}_0 \|_{\M_\Gamma}} \leq \left(1 - \frac{c_l^2}{C_u^2}\right)^{m/2}.
\end{align*}
\end{theorem}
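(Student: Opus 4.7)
The plan is to observe that the stated convergence result is essentially a direct consequence of the two operator bounds combined with the Eisenstat--Elman--Schultz (EES) framework for GMRES \cite{eielsc}. Once the upper bound $\langle \mathbf{T}\lambdae_\Gamma, \mathbf{T}\lambdae_\Gamma\rangle_{\M_\Gamma} \leq C_u^2 \langle \lambdae_\Gamma, \lambdae_\Gamma\rangle_{\M_\Gamma}$ and the lower bound $c_l\langle \lambdae_\Gamma, \lambdae_\Gamma\rangle_{\M_\Gamma} \leq \langle \lambdae_\Gamma, \mathbf{T}\lambdae_\Gamma\rangle_{\M_\Gamma}$ are in hand, the residual decay is a standard consequence. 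I would proceed in three steps.

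First, I would invoke the framework from \cite[Theorem 6.5]{TZAD2021}, which specializes the classical EES analysis to the BDDC-preconditioned nonsymmetric positive definite setting. In this framework, it suffices to exhibit positive constants $c_l, C_u$ such that the preconditioned operator $\mathbf{T}$ is coercive below and bounded above in the $\M_\Gamma$-inner product. The quoted per-step contraction factor then follows from a standard projection argument on the GMRES residuals. To apply the framework I first have to check that the ambient inner product $\langle \cdot,\cdot\rangle_{\M_\Gamma}$ is indeed an inner product on $\widehat{\Lambdae}_\Gamma$, which follows from Lemma~\ref{MSrelation} together with the positivity of $\widetilde{\B}$ and $\widetilde{\L}$ built into the definitions \eqref{defAanorm}--\eqref{eq:Manorm}.

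Second, the lower bound is supplied by Lemma~\ref{cl} and the upper bound by Lemma~\ref{upperbbound}. These two lemmas do the analytic heavy lifting using the subdomain extension and norm machinery of Section~\ref{sec:extennorms}: the identity $\|\lambdae_\Gamma\|^2_{\M_\Gamma}=\|\mathbf{E}_\C(\lambdae_{\A,\Gamma})\|^2_{\Ma}$ from \eqref{MaMGamma}, the decomposition of $\Aa$ into $\beta^{1/2}\Aad$ plus the skew coupling $\pm L$ in \eqref{Aa}, and the norm equivalence \eqref{ADestimateE} between $\|\mathbf{E}_{\Aad}(\cdot)\|_{\Aad}$ and $\interleave \cdot \interleave$ on the advection--diffusion side. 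I would combine these with the $L^2$ estimates \eqref{UA1L2}--\eqref{UA2L2} and the gradient and jump estimates \eqref{gradientUestimate}--\eqref{Uminuslambdaestimate} to translate the required bounds from the advection--diffusion setting, where the results of \cite{TZAD2021} apply, back to the optimal control setting.

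Finally, combining the two bounds yields
\[
\inf_{\lambdae_\Gamma \neq 0} \frac{\langle \lambdae_\Gamma, \mathbf{T}\lambdae_\Gamma \rangle_{\M_\Gamma}}{\|\lambdae_\Gamma\|_{\M_\Gamma}\, \|\mathbf{T}\lambdae_\Gamma\|_{\M_\Gamma}} \;\geq\; \frac{c_l}{C_u},
\]
so that the sine of the associated field-of-values angle is bounded by $\sqrt{1 - c_l^2/C_u^2}$, which is precisely the per-step contraction factor for GMRES residuals measured in the $\M_\Gamma$-norm. Iterating $m$ times gives the stated estimate. The main obstacle is not this closing assembly but the sharp tracking of the $\beta$-dependence in $c_l$ and $C_u$ inside the two supporting lemmas: both constants must exhibit only mild dependence on $\beta$ when $H$ is small, which requires carefully using the $\beta^{1/2}$ scaling of $\Aa$ together with the $L^2$ contribution $\La$ so that the skew coupling $\pm L$ in \eqref{Aa} is absorbed rather than blowing up as $\beta\to 0$. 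This is exactly why $\M_\Gamma$ is defined as a full norm \eqref{eq:Manorm} including the $L^2$ component, in contrast to the semi-norm used in \cite{TZAD2021}; the technical bridge from that semi-norm framework to the present full-norm framework is what Lemmas~\ref{upperbbound} and~\ref{cl} must supply.
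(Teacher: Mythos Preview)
Your proposal is correct and follows essentially the same approach as the paper: the two operator bounds are supplied directly by Lemma~\ref{upperbbound} and Lemma~\ref{cl}, and the GMRES residual estimate then follows from the Eisenstat--Elman--Schultz framework as specialized in \cite[Theorem~6.5]{TZAD2021}. The paper does not give a separate proof of the theorem beyond citing that framework and stating the two supporting lemmas, so your three-step assembly matches it exactly.
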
}}

%
\subsection{Average operator estimates}
{{The relationship among the norms \( \interleave \cdot \interleave \) defined in~\eqref{eq:norm2}, \( \|\cdot\|_{\Aa} \)  defined in~\eqref{defAanorm}, and \( \|\cdot\|_{\Ma} \) defined ~\eqref{eq:Manorm} is established in the following lemma.

}}
\begin{lemma}\label{lambdaetriple}
For any $K\in\T_h$ and $\lambdae=(\lambda,\mu)^T\in\widetilde\Lambdae$ we have 
\begin{equation}\label{Ktriplebar}
C\interleave\lambda\interleave_K\leq \|{Q}_{A_1}{\lambdae}\|_K,\quad C \interleave\mu\interleave_K\leq \|{Q}_{A_2}{\lambdae}\|_K
\end{equation}
which implies that 
\begin{equation}\label{Anormtriplrbar}
C\beta^{\frac{1}{4}} \interleave \lambdae \interleave \leq \|{\bf E}_{\C}(\lambdae) \|_{\Aa}\leq 
 \| {\bf E}_{\C}(\lambdae) \|_{\Ma}.
\end{equation}
\end{lemma}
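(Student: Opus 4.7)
The plan is to prove the element-wise bound \eqref{Ktriplebar} first; once that is in hand, \eqref{Anormtriplrbar} follows quickly by summing over elements and noting that the $\Ma$-norm dominates the $\Aa$-norm.

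For \eqref{Ktriplebar}, the strategy is to reduce to the analogous estimate for the advection-diffusion extension. The block structure \eqref{Aa} of $\Aa$ is key: isolating the first block row of the defining system \eqref{equation:AmatrixQU} on a single element $K$ shows that $(Q_{A_1}\lambdae, U_{A_1}\lambdae, \lambda)$ satisfies the same local HDG system as the pure AD extension $(Q_{A^{ad}_1}\lambda, U_{A^{ad}_1}\lambda, \lambda)$, except with an additional $L^2$-source term of the form $\beta^{-1/2}L(\cdot, U_{A_2}\lambdae, \cdot)^T$ coming from the off-diagonal coupling. A parallel statement holds for $(Q_{A_2}\lambdae, U_{A_2}\lambdae, \mu)$. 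Writing $Q_{A_1}\lambdae = Q_{A^{ad}_1}\lambda + \phi_1$, the correction $\phi_1$ satisfies a local AD problem with zero boundary data and $L^2$ forcing proportional to $U_{A_2}\lambdae/\beta^{1/2}$. Combining the element-wise AD estimate $C\interleave\lambda\interleave_K \leq \|Q_{A^{ad}_1}\lambda\|_K$ (derivable along the lines of \cite[Section~7]{TZAD2021}) with a bound on $\|\phi_1\|_K$ from standard stability of the AD extension and a triangle inequality then yields \eqref{Ktriplebar}.

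For \eqref{Anormtriplrbar}, squaring and summing \eqref{Ktriplebar} over $K\in\T_h$, combined with the definition \eqref{defAanorm} (whose right-hand side contains $\beta^{1/2}\|Q_{A_1}\lambdae\|^2_{\T_h}+\beta^{1/2}\|Q_{A_2}\lambdae\|^2_{\T_h}$ among nonnegative terms), gives
\begin{align*}
C\interleave\lambdae\interleave^2 \;\leq\; \|Q_{A_1}\lambdae\|^2_{\T_h} + \|Q_{A_2}\lambdae\|^2_{\T_h} \;\leq\; \beta^{-1/2}\,\|{\bf E}_{\C}(\lambdae)\|^2_{\Aa}.
\end{align*}
Taking square roots yields the lower bound in \eqref{Anormtriplrbar}. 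The upper bound $\|{\bf E}_{\C}(\lambdae)\|_{\Aa} \leq \|{\bf E}_{\C}(\lambdae)\|_{\Ma}$ is immediate from \eqref{eq:Manorm}, since $\|{\bf E}_{\C}(\lambdae)\|^2_{\Ma} - \|{\bf E}_{\C}(\lambdae)\|^2_{\Aa} = \|{\bf U}\lambdae\|^2_{\T_h} \geq 0$.

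The main obstacle is the first step. While the AD estimate $C\interleave\lambda\interleave_K \leq \|Q_{A^{ad}_1}\lambda\|_K$ is a standard HDG result, the OC extension differs from the AD extension by a coupling correction involving the dual component $U_{A_2}\lambdae$ (and symmetrically for the other block). The delicate part is to bound this correction so that it can be absorbed into the right-hand side $\|Q_{A_1}\lambdae\|_K$ without introducing unwanted $\beta$-dependence or $h$-dependence. A direct energy estimate for $\phi_1$ in the perturbed AD problem appears workable thanks to the antisymmetric structure of the $L$-blocks in $\Aa$, which is precisely what causes the off-diagonal $L$-terms to drop out of the quadratic form $\|{\bf E}_{\C}(\lambdae)\|^2_{\Aa}$ to begin with.
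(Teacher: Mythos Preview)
Your argument for \eqref{Anormtriplrbar} from \eqref{Ktriplebar} is correct and matches the paper's. The issue is in the first step.

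Your plan to write $Q_{A_1}\lambdae = Q_{A^{ad}_1}\lambda + \phi_1$ and then use a triangle inequality does not close. From $C\interleave\lambda\interleave_K \leq \|Q_{A^{ad}_1}\lambda\|_K \leq \|Q_{A_1}\lambdae\|_K + \|\phi_1\|_K$ you would need $\|\phi_1\|_K$ to be absorbable into $\interleave\lambda\interleave_K$. But $\phi_1$ is driven by a source proportional to $\beta^{-1/2}U_{A_2}\lambdae$, and $U_{A_2}\lambdae$ depends on $\mu$; there is no reason $\|\phi_1\|_K$ should be controlled by $\interleave\lambda\interleave_K$ (take $\lambda = 0$, $\mu \neq 0$). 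The antisymmetry of the $L$-blocks makes the off-diagonal terms vanish in the \emph{full} quadratic form ${\bf E}_{\C}(\lambdae)^T\Aa{\bf E}_{\C}(\lambdae)$, but it gives nothing at the level of a single block on a single element.

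The point you are missing is that \eqref{Ktriplebar} follows from the \emph{first} equation of the local system \eqref{equation:AmatrixQU} alone, i.e.\ the constitutive relation
\[
(Q_{A_1}\lambdae,{\bf r})_K-(U_{A_1}\lambdae,\nabla\cdot{\bf r})_K+\langle\lambda,{\bf r}\cdot{\bf n}\rangle_{\partial K}=0,\qquad\forall\,{\bf r}\in(P^k(K))^n,
\]
which is \emph{identical} in form to the first equation of the advection--diffusion extension (compare \eqref{systemA} and \eqref{systemAad}); the coupling through $L$ enters only in the second equation. Hence the argument of \cite[Lemma~3.8]{CDGT2014}, which extracts $\interleave\lambda\interleave_K$ from this equation by choosing suitable test functions ${\bf r}$, applies verbatim to the pair $(Q_{A_1}\lambdae,U_{A_1}\lambdae,\lambda)$ regardless of what $(Q_{A_2}\lambdae,U_{A_2}\lambdae,\mu)$ is doing. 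This is exactly what the paper invokes. No decomposition into AD extension plus correction is needed.
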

\begin{proof}
\eqref{Ktriplebar} can be proved similarly as \cite[Lemma 3.8]{CDGT2014}. ~\eqref{Anormtriplrbar} follows  from assembling ~\eqref{Ktriplebar} over all elements $K\in\T_h$, with the definitions~\eqref{defAanorm} and ~\eqref{eq:Manorm}.
\end{proof}

Estimates for the $L^2$-norm of $\U\lambdae$ is given in the lemma below.
\begin{lemma}\label{UL2K}
For any $\lambdae=(\lambda,\mu)^T\in\widetilde\Lambdae$ , we have the following estimates:
\begin{align}
&\|\U\lambdae\|_{K} \leq Cc_0 \|\lambdae\|_{h,K},\label{Uestimate2}\\
&\|\U\lambdae\|_{\T_h} \leq Cc_0 \|\lambdae\|_{h} \label{Uestimate3},
\end{align}
where $c_0= h \beta^{-\frac{1}{2}} + 1$. 
Therefore,
\begin{align}\label{Uestimate2Th}
\|\U\lambdae\|_{\T_h}\leq Cc_0\beta^{-\frac14}\|{\bf E}_{\C}(\lambdae)\|_{\Aa}.
\end{align}

\end{lemma}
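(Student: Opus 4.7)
The plan is to split each optimal-control extension into an advection-diffusion (AD) boundary-extension piece plus a source-driven correction arising from the mass-coupling block $L$ in~\eqref{Aa}, then control the source-driven part by an elementwise stability estimate. From~\eqref{equation:AmatrixQU} and the block structure $\A_a=\begin{bmatrix}\beta^{1/2}A^{ad}_1 & -L\\ L & \beta^{1/2}A^{ad}_2\end{bmatrix}$ together with the form of $L$ in~\eqref{Ldef}, one sees that on every element $K\in\T_h$ the pair $(Q_{A_1}\lambdae, U_{A_1}\lambdae)$ solves the AD$_1$ local HDG system with Dirichlet trace $\lambda$ and interior $L^2$ source $\beta^{-1/2}U_{A_2}\lambdae$. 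By linearity I write
\begin{equation*}
U_{A_1}\lambdae = U_{A^{ad}_1}\lambda + W_1,\qquad U_{A_2}\lambdae = U_{A^{ad}_2}\mu + W_2,
\end{equation*}
where $W_1$ is the zero-trace AD$_1$ extension with source $\beta^{-1/2}U_{A_2}\lambdae$ and $W_2$ the zero-trace AD$_2$ extension with source $-\beta^{-1/2}U_{A_1}\lambdae$. This reduces the task to~\eqref{UA1L2}--\eqref{UA2L2} plus an elementwise bound on $W_i$.

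The key ingredient is the estimate $\|W_i\|_K \leq Ch\beta^{-1/2}\|U_{A_{3-i}}\lambdae\|_K$. Testing the zero-trace AD$_1$ HDG local system for $W_1$ against itself on $K$, and using the identity $(W_1,\bz\cdot\nabla W_1)_K = \tfrac12\langle\bz\cdot\mathbf{n}\,W_1, W_1\rangle_{\partial K}$ (from $\nabla\cdot\bz=0$) to collapse the transport term, yields
\begin{equation*}
\|Q(W_1)\|_K^2 + \bigl\langle(\tau_1 - \tfrac12\bz\cdot\mathbf{n})W_1, W_1\bigr\rangle_{\partial K} = \beta^{-1/2}(U_{A_2}\lambdae, W_1)_K.
\end{equation*}
Assumption~\ref{eq:assump} makes the boundary form on the left uniformly coercive, so $\|Q(W_1)\|_K^2 + \|W_1\|_{\partial K}^2 \leq C\beta^{-1/2}\|U_{A_2}\lambdae\|_K\|W_1\|_K$. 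The passage from $\|Q(W_1)\|_K$ and $\|W_1\|_{\partial K}$ to $\|W_1\|_K$ is carried out in the same polynomial-scaling framework as the proof of~\eqref{UA1L2}, and contributes one power of $h$, producing the claimed source-driven bound.

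Combining the local source-driven bound with~\eqref{UA1L2}--\eqref{UA2L2} through the triangle inequality gives the coupled estimate
\begin{equation*}
\|U_{A_1}\lambdae\|_K + \|U_{A_2}\lambdae\|_K \leq C\|\lambdae\|_{h,K} + Ch\beta^{-1/2}\bigl(\|U_{A_1}\lambdae\|_K + \|U_{A_2}\lambdae\|_K\bigr).
\end{equation*}
Substituting each side into the other and rearranging yields~\eqref{Uestimate2} with constant $c_0 = h\beta^{-1/2}+1$; squaring and summing over $K\in\T_h$ then produces~\eqref{Uestimate3}. Finally,~\eqref{Uestimate2Th} follows by chaining~\eqref{Uestimate3} with Lemma~\ref{citeadlemma7.10} ($\|\lambdae\|_h \leq C\interleave\lambdae\interleave$) and Lemma~\ref{lambdaetriple} ($\beta^{1/4}\interleave\lambdae\interleave \leq \|{\bf E}_{\C}(\lambdae)\|_{\Aa}$).

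The principal obstacle is the elementwise source-driven stability: since the HDG zero-trace condition $\hat W_1 = 0$ does not force $W_1|_{\partial K}=0$, classical Poincaré is unavailable, and the stabilization $\tau_1 - \tfrac12\bz\cdot\mathbf{n}$ must be used to recover the $L^2(K)$ norm from $\|Q(W_1)\|_K$ and $\|W_1\|_{\partial K}$ with exactly the right $h$-scaling. A secondary subtlety is the closure of the coupled inequality in the last step, which is where the factor $c_0 = h\beta^{-1/2}+1$ (rather than a pure constant) appears and tacitly presumes $h\beta^{-1/2}$ is bounded so that the bootstrap converges.
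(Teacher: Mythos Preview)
Your decomposition $U_{A_i}\lambdae = U_{A^{ad}_i} + W_i$ and the elementwise energy identity for each $W_i$ are exactly right, and the final chain for~\eqref{Uestimate2Th} matches the paper. But the closure step contains a real gap that you yourself flag: the coupled inequality
\[
\|U_{A_1}\lambdae\|_K + \|U_{A_2}\lambdae\|_K \leq C\|\lambdae\|_{h,K} + Ch\beta^{-1/2}\bigl(\|U_{A_1}\lambdae\|_K + \|U_{A_2}\lambdae\|_K\bigr)
\]
can only be absorbed when $Ch\beta^{-1/2}<1$, and even then it produces $(1-Ch\beta^{-1/2})^{-1}$, not $c_0 = 1 + h\beta^{-1/2}$. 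The lemma is supposed to hold uniformly in $h$ and $\beta$, in particular for $h\beta^{-1/2}$ large, so ``substituting each side into the other and rearranging'' does not deliver the stated constant.

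The paper avoids the bootstrap by exploiting the \emph{skew-symmetry} of the coupling block $\begin{bmatrix}0&-L\\L&0\end{bmatrix}$. Instead of bounding the two source terms separately, it \emph{adds} the two energy identities for $W_1$ and $W_2$ first. The right-hand side becomes
\[
\beta^{-1/2}\bigl[(U_{A_2}\lambdae, W_1)_K - (U_{A_1}\lambdae, W_2)_K\bigr]
= \beta^{-1/2}\bigl[(U_{A^{ad}_2}\mu, W_1)_K - (U_{A^{ad}_1}\lambda, W_2)_K\bigr],
\]
because the cross terms $(W_2,W_1)_K - (W_1,W_2)_K$ cancel exactly. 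Now the source is controlled by $\beta^{-1/2}\|\lambdae\|_{h,K}(\|W_1\|_K+\|W_2\|_K)$ via~\eqref{UA1L2}--\eqref{UA2L2}, with no $\|\U\lambdae\|_K$ on the right. The same polynomial scaling (the paper cites \cite[Lemma~3.3]{CDGT2014}) then gives $\|W_1\|_K+\|W_2\|_K \leq Ch\beta^{-1/2}\|\lambdae\|_{h,K}$ outright, and the triangle inequality with the AD bounds yields $c_0 = 1 + h\beta^{-1/2}$ with no smallness assumption. Your argument is one step away from this: just add the two energy identities before estimating.
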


The following lemma provides the gradient estimates and other auxiliary bounds for the extensions of {the optimal control problem.}
\begin{lemma}\label{UUADL2difference2}
For $\lambdae=(\lambda,\mu)^T\in\widetilde{\Lambdae}$, we have 
\begin{align}
&\|U_{A_1}\lambdae - U_{A^{ad}_1}\lambda\|_{\T_h} + \|U_{A_2}\lambdae - U_{A^{ad}_2}\mu\|_{\T_h}\leq Ch\beta^{-\frac12}\|{\U}\lambdae\|_{\T_h}\label{UUaddifference},\\
&\beta^{\frac12}\|\nabla U_{A_1}\lambdae\|_{\T_h}\leq C(1+\beta^{\frac14})\|{\bf E}_{\C}(\lambdae)\|_{\Ma},\quad \beta^{\frac12}\|\nabla U_{A_2}\lambdae\|_{\T_h}\leq C(1+\beta^{\frac14})\|{\bf E}_{\C}(\lambdae)\|_{\Ma}.\label{gradient1ThM}
\end{align}
Moreover, we obtain that 
\begin{align}
&\beta^{\frac{1}{4}} \||\tau_1 - \frac{1}{2} \bz \cdot \mathbf{n}|^{\frac{1}{2}} 
(U_{A_1}\lambdae - \lambda)\|_{\partial\mathcal{T}_h} 
+ \beta^{\frac{1}{4}} \||\tau_2 + \frac{1}{2} \bz \cdot \mathbf{n} |^{\frac{1}{2}} 
(U_{A_2}\lambdae - \mu) \|_{\partial\mathcal{T}_h}\label{UAlambdaMestimate} \\
&\leq C h^{\frac{1}{2}}(1+\beta^{-\frac14})\|{\bf E}_{\C}(\lambdae)\|_{\Ma}.\notag
\end{align}
\end{lemma}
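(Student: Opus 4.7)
The plan is to prove the three estimates in the order in which they appear, since~\eqref{UUaddifference} serves as the main technical input for both~\eqref{gradient1ThM} and~\eqref{UAlambdaMestimate}.

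For~\eqref{UUaddifference}, the key step is to write down the local system satisfied by the differences $Q_{A_1}\lambdae - Q_{A^{ad}_1}\lambda$ and $U_{A_1}\lambdae - U_{A^{ad}_1}\lambda$. Using the block structure~\eqref{Aa} of $\A_a$, the defining system~\eqref{equation:AmatrixQU} of the optimal control extension, and the corresponding defining system for the advection--diffusion extension $(Q_{A^{ad}_1}\lambda, U_{A^{ad}_1}\lambda)$, these differences satisfy, on each element $K$, the same HDG system associated with $\Aadt_1$ but with zero trace data and an $L^2$ forcing proportional to $\beta^{-1/2} U_{A_2}\lambdae$, the coupling arising from the off-diagonal block $-L$. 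A local HDG stability estimate for advection--diffusion with homogeneous boundary trace and $L^2$ source (the HDG analogue of elliptic regularity on a single element, available from the framework of~\cite{TZAD2021}) then yields
\[
\|U_{A_1}\lambdae - U_{A^{ad}_1}\lambda\|_K \le C h \beta^{-1/2} \|U_{A_2}\lambdae\|_K.
\]
The companion bound with the roles of the two blocks swapped is obtained by an identical argument applied to $\Aadt_2$, and~\eqref{UUaddifference} follows by summing over $K \in \T_h$ together with the identity $\|\U\lambdae\|^2_{\T_h} = \|U_{A_1}\lambdae\|^2_{\T_h} + \|U_{A_2}\lambdae\|^2_{\T_h}$ from~\eqref{eqUnorm}.

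For the gradient bound~\eqref{gradient1ThM}, I would split $\nabla U_{A_1}\lambdae = \nabla U_{A^{ad}_1}\lambda + \nabla(U_{A_1}\lambdae - U_{A^{ad}_1}\lambda)$. The first piece is controlled on each $K$ by~\eqref{gradientUestimate} and assembled to $\|\nabla U_{A^{ad}_1}\lambda\|_{\T_h} \le C \interleave\lambda\interleave$; Lemma~\ref{lambdaetriple} then relates $\interleave\lambdae\interleave$ to $\|{\bf E}_{\C}(\lambdae)\|_{\Ma}$, producing $\beta^{1/2}\|\nabla U_{A^{ad}_1}\lambda\|_{\T_h} \le C \beta^{1/4}\|{\bf E}_{\C}(\lambdae)\|_{\Ma}$. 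For the second piece I would apply the polynomial inverse inequality $\|\nabla v\|_K \le C h^{-1}\|v\|_K$ elementwise and then invoke~\eqref{UUaddifference} together with $\|\U\lambdae\|_{\T_h} \le \|{\bf E}_{\C}(\lambdae)\|_{\Ma}$, which is a direct consequence of $\Ma = \Aa + \La$ and~\eqref{eq:Manorm}. The $U_{A_2}$ estimate is entirely parallel.

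For the boundary trace bound~\eqref{UAlambdaMestimate}, the strategy is again to split $U_{A_1}\lambdae - \lambda = (U_{A_1}\lambdae - U_{A^{ad}_1}\lambda) + (U_{A^{ad}_1}\lambda - \lambda)$. The second summand is handled by~\eqref{Uminuslambdaestimate} combined with the uniform upper bound on $|\tau_1 - \tfrac12 \bz \cdot \mathbf{n}|$ from Assumption~\ref{eq:assump}, and then Lemma~\ref{lambdaetriple} absorbs the required factor of $\beta^{-1/4}$, producing the $Ch^{1/2}\|{\bf E}_{\C}(\lambdae)\|_{\Ma}$ contribution. The first summand is controlled by the discrete trace inequality $\|v\|_{\partial K} \le C h^{-1/2}\|v\|_K$ for polynomial $v$, which combined with~\eqref{UUaddifference} yields a $Ch^{1/2}\beta^{-1/4}\|{\bf E}_{\C}(\lambdae)\|_{\Ma}$ contribution; summing the two gives the advertised $(1+\beta^{-1/4})$ factor. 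The $U_{A_2}$ term is treated symmetrically using $\tau_2 + \tfrac12 \bz \cdot \mathbf{n}$.

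The main obstacle is establishing~\eqref{UUaddifference} with the stated powers of $h$ and $\beta$: a naive energy-plus-Poincar\'e argument on a single element can return a weaker scaling, and care is required in tracking how the stabilizer $\tau$, the advective contribution $\bz \cdot \nabla$, and the global $\beta^{1/2}$ prefactor interact in the local HDG energy identity for zero-trace data with $L^2$ source. Once the advection--diffusion local stability for $\Aadt_1$ from~\cite{TZAD2021} is transported to the present coupled setting by observing that the coupling $-L$ enters merely as a zeroth-order $L^2$ perturbation scaled by $\beta^{-1/2}$, both~\eqref{gradient1ThM} and~\eqref{UAlambdaMestimate} become routine assemblages of inverse inequalities and previously established element-level bounds.
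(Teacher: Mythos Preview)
Your proposal is correct and follows essentially the same route as the paper. For~\eqref{UUaddifference}, the paper writes out explicitly the energy identity for the difference system (this is exactly the ``zero trace, $L^2$ source $\beta^{-1/2}U_{A_2}\lambdae$'' system you describe), then converts the resulting bound on $\|Q_{A_1}\lambdae-Q_{A^{ad}_1}\lambda\|_K$ and the weighted boundary term into an $L^2$ bound on $\|U_{A_1}\lambdae-U_{A^{ad}_1}\lambda\|_K$ via the local Poincar\'e-type inequality of \cite[Lemma~3.3]{CDGT2014}; this is the concrete content of the ``local HDG stability estimate'' you invoke, so your worry about a weaker scaling does not materialize. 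The arguments for~\eqref{gradient1ThM} and~\eqref{UAlambdaMestimate} in the paper are exactly your splitting plus inverse/trace inequalities; for~\eqref{UAlambdaMestimate} the paper reads the weighted boundary bound on $U_{A_1}\lambdae-U_{A^{ad}_1}\lambda$ directly off the energy identity rather than going through a discrete trace inequality as you do, but both paths give the same $Ch^{1/2}\beta^{-1/4}$ contribution.
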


Lemma~\ref{abilinearlessthan1} establishes the relation between ${\bf E}_{\C}(\lambdae_{\A,\Gamma})$, as defined in~\eqref{defEA} and~\eqref{lambdaIdef}, and ${\bf E}_{\Aad}(\lambdae_{\Aad,\Gamma})$, as defined in~\eqref{defEAd} and~\eqref{lambdaad}.
\begin{lemma}\label{abilinearlessthan1}
Given $\lambdae_\Gamma=(\lambda_\Gamma,\mu_\Gamma)^T\in\widetilde{\bf\Lambda}_\Gamma,$ we have 
\begin{equation}\label{less1AAd1}
\|{\bf E}_{\C}(\lambdae_{\A,\Gamma})\|_{\Aa}
\leq C\beta^{\frac14}c_0\|{\bf E}_{\Aad}(\lambdae_{\Aad,\Gamma})\|_{\Aad}+C(c_0\beta^{-\frac14})\|\lambdae_{\Aad,\Gamma}\|_h,
\end{equation}
and
\begin{equation}\label{great1AAd}
\beta^{\frac{1}{4}} \|{\bf E}_{\Aad}(\lambdae_{\Aad,\Gamma})\|_{\Aad} \leq C \|{\bf E}_{\C}(\lambdae_{\A,\Gamma})\|_{\Aa}. 
\end{equation}
\end{lemma}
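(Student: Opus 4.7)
The key structural identity underlying both bounds is that the quadratic form on $\Aa$ decouples into two independent $A^{ad}_j$-energies. Writing ${\bf E}_{\C}(\lambdae)=({\bf E}_1,{\bf E}_2)^T$ in the state/adjoint block form and using that $L$ represents the symmetric $L^2$ inner product (see~\eqref{Ldef}), the skew cross terms $-{\bf E}_1^T L{\bf E}_2+{\bf E}_2^T L{\bf E}_1$ vanish, so
\[
\|{\bf E}_{\C}(\lambdae)\|^2_{\Aa}=\beta^{\frac12}\,{\bf E}_1^T A^{ad}_1{\bf E}_1+\beta^{\frac12}\,{\bf E}_2^T A^{ad}_2{\bf E}_2,
\]
and a parallel identity (without the $\beta^{\frac12}$ factor) holds for $\|{\bf E}_{\Aad}(\lambdae)\|^2_{\Aad}$. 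Both inequalities therefore reduce to comparing pure $A^{ad}_j$-energies of two different extensions (optimal-control versus advection-diffusion) of the same interface data $\lambdae_\Gamma$.

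For the second inequality, I would use the Galerkin orthogonality enjoyed by the advection-diffusion extension ${\bf E}_{\Aad}(\lambdae_{\Aad,\Gamma})$: it satisfies $A^{ad}_j$-orthogonality against all test functions vanishing on $\Gamma$. Combined with coercivity of the symmetric part of $A^{ad}_j$ --- equivalently the norm equivalence~\eqref{ADestimateE} --- this yields ${\bf E}_{\Aad,j}^T A^{ad}_j{\bf E}_{\Aad,j}\le C\,{\bf E}_{\C,j}^T A^{ad}_j{\bf E}_{\C,j}$ for $j=1,2$. Summing, multiplying by $\beta^{\frac12}$, and taking square roots produces the desired $\beta^{\frac14}\|{\bf E}_{\Aad}(\lambdae_{\Aad,\Gamma})\|_{\Aad}\le C\|{\bf E}_{\C}(\lambdae_{\A,\Gamma})\|_{\Aa}$.

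For the first inequality the optimal-control block satisfies the coupled equation $A^{ad}_1(Q_{A_1}\lambdae,U_{A_1}\lambdae,\lambda)^T=\beta^{-\frac12}L(Q_{A_2}\lambdae,U_{A_2}\lambdae,\mu)^T$ rather than a pure $A^{ad}_1$-extension equation, so minimization is not directly available. My plan is to decompose $(Q_{A_1}\lambdae_{\A,\Gamma},U_{A_1}\lambdae_{\A,\Gamma})=(Q_{A^{ad}_1}\lambda_{A,\Gamma},U_{A^{ad}_1}\lambda_{A,\Gamma})+(\delta Q_1,\delta U_1)$, where the correction solves the $A^{ad}_1$-equation with zero boundary data and right-hand side $\beta^{-\frac12}L\cdot(\text{adjoint block})$ (and analogously for the adjoint block). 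A triangle inequality then produces two contributions. The first --- the $A^{ad}_j$-energy of the pure-AD element extension of $\lambda_{A,\Gamma}$ --- is reduced to $\|{\bf E}_{\Aad}(\lambdae_{\Aad,\Gamma})\|_{\Aad}$ plus a lower-order term via a further subdomain-level sub-split $\lambda_{A,\Gamma}=\lambda_{A^{ad}_1,\Gamma}+\delta\lambda$, the triple-bar equivalence~\eqref{ADestimateE}, and Lemma~\ref{normtriplebarhnorm}. The second --- the $A^{ad}_j$-energy of the correction --- is controlled by testing the correction equation against itself and using Lemma~\ref{UL2K} to bound $\|{\bf U}\lambdae\|_{\T_h}$ by $c_0\|\lambdae\|_h$. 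Carefully tracking the $\beta^{\frac12}$ prefactor from $\|\cdot\|_{\Aa}$ and the $\beta^{-\frac12}$ inside the coupling source produces the $c_0\beta^{\frac14}$ and $c_0\beta^{-\frac14}$ factors in the final bound.

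The main obstacle is precisely this first inequality: one must convert the $L^2$-type control of the coupling source (only available at the $L^2$ scale via Lemma~\ref{UL2K}, which introduces $c_0=h\beta^{-\frac12}+1$) into an $A^{ad}_j$-energy bound on the correction while keeping the $\beta$-dependence sharp. This $L^2$-versus-energy scale mismatch is what forces the appearance of the lower-order term $c_0\beta^{-\frac14}\|\lambdae_{\Aad,\Gamma}\|_h$, rather than a clean bound by $\|{\bf E}_{\Aad}(\lambdae_{\Aad,\Gamma})\|_{\Aad}$ alone.
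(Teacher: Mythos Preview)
Your route to \eqref{less1AAd1} is genuinely different from the paper's. The paper does not split the optimal-control element extension into an advection-diffusion part plus a correction; instead it uses the Galerkin orthogonality of the \emph{optimal-control} extension, namely $({\bf E}_{\C}(\lambdae_{\A,\Gamma})-{\bf E}_{\Aad}(\lambdae_{\Aad,\Gamma}))^T\Aa{\bf E}_{\C}(\lambdae_{\A,\Gamma})=0$ (valid because the difference vanishes on $\widetilde\Lambdae_\Gamma$), to rewrite $\|{\bf E}_{\C}(\lambdae_{\A,\Gamma})\|_{\Aa}^2={\bf E}_{\Aad}(\lambdae_{\Aad,\Gamma})^T\Aa{\bf E}_{\C}(\lambdae_{\A,\Gamma})$ and then expands this mixed bilinear form term by term, using \eqref{gradientUestimate}, \eqref{Uminuslambdaestimate}, \eqref{UA1L2}, \eqref{Uestimate2Th}, and a boundary estimate analogous to Lemma~\ref{bdulambda}. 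Your perturbative decomposition could plausibly be pushed through, but the paper's single orthogonality identity avoids the two-layer (element then subdomain) splitting you describe.

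For \eqref{great1AAd} your argument has a real gap. The bilinear forms $A^{ad}_j$ are nonsymmetric, so Galerkin orthogonality of the advection-diffusion extension together with coercivity of the symmetric part does \emph{not} yield a C\'ea-type bound ${\bf E}_{\Aad,j}^T A^{ad}_j{\bf E}_{\Aad,j}\le C\,{\bf E}_{\C,j}^T A^{ad}_j{\bf E}_{\C,j}$; you would also need a continuity estimate for the mixed bilinear form, which is not what \eqref{ADestimateE} provides. There is a second subtlety: ${\bf E}_{\C,j}$ carries the optimal-control \emph{element-level} lift $(Q_{A_j}\lambdae,U_{A_j}\lambdae)$, not the advection-diffusion element lift, so the subdomain-level Galerkin orthogonality of $\lambdae_{\Aad,\Gamma}$ does not directly pair with ${\bf E}_{\C,j}$ in the way you suggest. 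The paper circumvents both issues by introducing a third, \emph{symmetric} elliptic extension $\lambdae_{\A^e,\Gamma}$ (the discrete harmonic extension) and chaining through triple-bar norms: $\|{\bf E}_{\Aad}(\lambdae_{\Aad,\Gamma})\|_{\Aad}\le C\interleave\lambdae_{\Aad,\Gamma}\interleave\le C\interleave\lambdae_{\A^e,\Gamma}\interleave$, then the genuine energy-minimization property $\|{\bf E}_{\A^e}(\lambdae_{\A^e,\Gamma})\|_{\A^e}\le\|{\bf E}_{\A^e}(\lambdae_{\A,\Gamma})\|_{\A^e}$ (which requires symmetry), and finally $\interleave\lambdae_{\A,\Gamma}\interleave\le C\beta^{-1/4}\|{\bf E}_{\C}(\lambdae_{\A,\Gamma})\|_{\Aa}$ from \eqref{Anormtriplrbar}. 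The harmonic intermediary is the missing idea in your outline.
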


We define two Schur complements, $\widetilde{S}^{ad}_{1,\Gamma}$ and $\widetilde{S}^{ad}_{2,\Gamma}$, for $A^{ad}_1$ and $A^{ad}_2$ in~\eqref{cB}.
$\widetilde{S}^{ad}_{1,\Gamma}$ is defined similarly to \cite[Equation (4.1)]{TZAD2021} with $\epsilon=1$ and $\tau=\tau_1$.  
 $\widetilde{S}^{ad}_{2,\Gamma}$ is defined analogously with $\tau=\tau_2$ and $\bz$ replaced by $-\bz$ in $\widetilde{S}^{ad}_{1,\Gamma}$. 
Let
$\widetilde{\S}^{\ad}_{\Gamma}=\begin{bmatrix}\widetilde{S}^{ad}_{1,\Gamma}& {\bf 0}\\ {\bf 0}&\widetilde{S}^{ad}_{2,\Gamma}\end{bmatrix}$.
Then, we have 
\begin{equation}\label{lambdaAadAS}
\|\lambdae_{\Gamma}\|_{\widetilde{\S}^{\ad}_\Gamma}=\|{\bf E}_{\Aad}(\lambdae_{\Aad,\Gamma})\|_{\Aad}.
\end{equation}
Let $\E_{\D}\lambdae_\Gamma$ be the average operator defined as
$\E_{\D}\lambdae_\Gamma = \widetilde{\R}_\Gamma\widetilde{\R}_{\D,\Gamma}\lambdae_\Gamma.$ 
An estimate for the average operator of the advection-diffusion problem is provided in~\cite[Lemma 6.9]{TZAD2021} with $\epsilon=1$.  This leads to the following lemma:

\begin{lemma}\label{CEDAad}
For $\lambdae_\Gamma\in \widetilde{\Lambdae}_\Gamma$, we have
\begin{align*}
\|{\bf E_D}\lambdae_\Gamma\|_{\widetilde{\S}^{\ad}_\Gamma}\leq C\left(1+\log\frac Hh\right)\|\lambdae_\Gamma\|_{\widetilde{\S}^{\ad}_\Gamma}.
\end{align*}
\end{lemma}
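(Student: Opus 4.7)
The plan is to reduce Lemma~\ref{CEDAad} to the cited advection-diffusion estimate by exploiting the block-diagonal structure of $\widetilde{\S}^{\ad}_\Gamma$. By construction,
$$\widetilde{\S}^{\ad}_\Gamma=\begin{bmatrix}\widetilde{S}^{ad}_{1,\Gamma}& {\bf 0}\\ {\bf 0}&\widetilde{S}^{ad}_{2,\Gamma}\end{bmatrix},$$
and the injection/restriction operators defining $\E_{\D}=\widetilde{\R}_\Gamma\widetilde{\R}_{\D,\Gamma}$ act componentwise on $\lambdae_\Gamma=(\lambda_\Gamma,\mu_\Gamma)^T$; in particular $\E_{\D}\lambdae_\Gamma=(\E_{\D}\lambda_\Gamma,\E_{\D}\mu_\Gamma)^T$ (the two scalar $\E_\D$'s are the natural scalar averaging operators associated with the advection-diffusion problems for $A^{ad}_1$ and $A^{ad}_2$). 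Consequently, the squared norm decouples:
$$\|\E_\D\lambdae_\Gamma\|^2_{\widetilde{\S}^{\ad}_\Gamma}=\|\E_\D\lambda_\Gamma\|^2_{\widetilde{S}^{ad}_{1,\Gamma}}+\|\E_\D\mu_\Gamma\|^2_{\widetilde{S}^{ad}_{2,\Gamma}}.$$

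Next I would apply \cite[Lemma 6.9]{TZAD2021} (with $\epsilon=1$) to each of the two scalar summands separately. For the first, the Schur complement $\widetilde{S}^{ad}_{1,\Gamma}$ is the advection-diffusion complement for convection field $\bz$ and stabilizer $\tau_1$, so the cited bound gives $\|\E_\D\lambda_\Gamma\|^2_{\widetilde{S}^{ad}_{1,\Gamma}}\leq C(1+\log(H/h))^2\,\|\lambda_\Gamma\|^2_{\widetilde{S}^{ad}_{1,\Gamma}}.$ For the second, $\widetilde{S}^{ad}_{2,\Gamma}$ is obtained from the same construction by replacing $\bz$ with $-\bz$ and $\tau_1$ with $\tau_2$. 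Since the cited result only uses the divergence-free condition on the convection field and Assumption~\ref{eq:assump} (which, via $\tau_1=\tau_2+\bz\cdot\mathbf{n}$, is invariant under the reflection $\bz\mapsto -\bz$, $\tau_1\leftrightarrow\tau_2$), the same estimate holds with a constant independent of the sign of $\bz$, yielding $\|\E_\D\mu_\Gamma\|^2_{\widetilde{S}^{ad}_{2,\Gamma}}\leq C(1+\log(H/h))^2\,\|\mu_\Gamma\|^2_{\widetilde{S}^{ad}_{2,\Gamma}}.$

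Finally, summing the two inequalities and using the decoupling identity on the right-hand side to recombine $\|\lambda_\Gamma\|^2_{\widetilde{S}^{ad}_{1,\Gamma}}+\|\mu_\Gamma\|^2_{\widetilde{S}^{ad}_{2,\Gamma}}=\|\lambdae_\Gamma\|^2_{\widetilde{\S}^{\ad}_\Gamma}$, and then taking square roots, yields the asserted bound. The only substantive point to check is that the cited lemma truly applies uniformly to both configurations; this is a matter of verifying that the constant in \cite[Lemma 6.9]{TZAD2021} depends on $\bz$ only through $\|\bz\|_{W^{1,\infty}}$ and the stabilizer constants in Assumption~\ref{eq:assump}, both of which are invariant under $\bz\mapsto -\bz$, $\tau_1\leftrightarrow\tau_2$. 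No new analytic machinery is needed beyond this observation.
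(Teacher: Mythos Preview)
Your proposal is correct and matches the paper's approach: the paper simply cites \cite[Lemma 6.9]{TZAD2021} with $\epsilon=1$ and states that this leads to the lemma, without spelling out the details. Your observation that the block-diagonal structure of $\widetilde{\S}^{\ad}_\Gamma$ and the componentwise action of $\E_\D$ reduce the estimate to two applications of the cited scalar result is exactly the intended derivation.
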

We are now ready to provide the estimates for the average operator.
\begin{lemma}\label{averagelarger1}
Given $\lambdae_\Gamma \in \widetilde{\Lambdae}_\Gamma$, there exists a positive constant $C$, independent of $\beta, H, h$, such that for all $\lambdae_\Gamma \in \widetilde{\Lambdae}_\Gamma$, we have
\begin{align}
\|{\bf E_D}\lambdae_\Gamma\|_{\widetilde{\S}_\Gamma}&\leq CC_{ED}\|\lambdae_\Gamma\|_{\widetilde{\S}_\Gamma},\label{CED} \\
 \|{\bf E_D}\lambdae_\Gamma\|_{\widetilde{\M}_\Gamma}&\leq C_{ED,M}\|\lambdae_\Gamma\|_{\widetilde{\M}_\Gamma} ,\label{ED2}
\end{align}
where $C_{ED} =c_0(1+H\beta^{-\frac12})\left(1+\log\frac Hh\right), C_{ED,M}=C(1+c_0H\beta^{-\frac14})C_{ED},$ and $c_0$ is defined in Lemma~\ref{UL2K}.
\end{lemma}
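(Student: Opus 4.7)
The plan is to prove (\ref{CED}) first by reducing the optimal control average operator estimate to the corresponding advection-diffusion estimate in Lemma~\ref{CEDAad}, and then to bootstrap (\ref{ED2}) from (\ref{CED}) by controlling the additional $L^2$-coupling contribution $\|\U\cdot\|_{\T_h}$. Throughout, the crucial ingredient for the $H\beta^{-\frac12}$ and $c_0H\beta^{-\frac14}$ improvements is Assumption~\ref{assumpconstrain}: edge averages of any $\lambdae_\Gamma\in\widetilde{\bm\Lambda}_\Gamma$ lie in the primal subspace $\widehat{\bm\Lambda}_\Pi$, which $\E_{\D}$ preserves. Hence the jump $(\E_{\D}-I)\lambdae_\Gamma$ has zero edge average on every subdomain interface edge, so the $H$-weighted Lemma~\ref{citeadlemma7.8} becomes available in place of the weaker Lemma~\ref{citeadlemma7.10}.

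For (\ref{CED}), I would first use Lemma~\ref{MSrelation} together with (\ref{MaMGamma}) to identify
\[
\|\E_{\D}\lambdae_\Gamma\|_{\widetilde{\S}_\Gamma}=\|{\bf E}_{\C}((\E_{\D}\lambdae_\Gamma)_{\A,\Gamma})\|_{\Aa},
\]
and then apply Lemma~\ref{abilinearlessthan1}, equation (\ref{less1AAd1}), to split the right-hand side into an advection-diffusion energy contribution bounded by $C\beta^{1/4}c_0\|{\bf E}_{\Aad}((\E_{\D}\lambdae_\Gamma)_{\Aad,\Gamma})\|_{\Aad}$ plus an $h$-norm remainder $Cc_0\beta^{-1/4}\|(\E_{\D}\lambdae_\Gamma)_{\Aad,\Gamma}\|_h$. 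The first part is converted via (\ref{lambdaAadAS}) into the advection-diffusion Schur norm, controlled by Lemma~\ref{CEDAad} to produce the $(1+\log(H/h))$ factor, and then returned to the optimal control norm by (\ref{great1AAd}); the $\beta^{1/4}$ and $\beta^{-1/4}$ cancel, leaving the $Cc_0(1+\log(H/h))\|\lambdae_\Gamma\|_{\widetilde{\S}_\Gamma}$ piece of $C_{ED}$. For the remainder, I would split $\E_{\D}\lambdae_\Gamma=\lambdae_\Gamma+(\E_{\D}-I)\lambdae_\Gamma$: the $\lambdae_\Gamma$-part is bounded by Lemma~\ref{citeadlemma7.10}, (\ref{ADestimateE}) and (\ref{great1AAd}); the jump part is where Assumption~\ref{assumpconstrain} enters, allowing Lemma~\ref{citeadlemma7.8} applied subdomain-by-subdomain together with Lemma~\ref{CEDAad} to deliver the $H\beta^{-1/2}$ contribution to $C_{ED}$.

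For (\ref{ED2}), I would start from the identity
\[
\|\E_{\D}\lambdae_\Gamma\|^2_{\widetilde{\M}_\Gamma}=\|\E_{\D}\lambdae_\Gamma\|^2_{\widetilde{\S}_\Gamma}+\|\U(\E_{\D}\lambdae_\Gamma)_{\A,\Gamma}\|^2_{\T_h},
\]
which follows from (\ref{eq:Manorm}), (\ref{MaMGamma}) and Lemma~\ref{MSrelation}. The first summand is handled by (\ref{CED}). For the second, Lemma~\ref{UL2K} converts $\|\U\cdot\|_{\T_h}$ into $Cc_0\|(\E_{\D}\lambdae_\Gamma)_{\A,\Gamma}\|_h$, and I would then repeat the decomposition above in the optimal control extension: the $\lambdae_{\A,\Gamma}$-part is controlled by $\|\lambdae_\Gamma\|_{\widetilde{\M}_\Gamma}$ via Lemmas~\ref{lambdaetriple} and~\ref{citeadlemma7.10}, while the jump part again invokes Assumption~\ref{assumpconstrain} plus Lemma~\ref{citeadlemma7.8} subdomain-wise, feeding back through (\ref{CED}). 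Combining the $c_0\beta^{-1/4}$ prefactor from Lemma~\ref{UL2K} (sharpened in~(\ref{Uestimate2Th})) with the $H$-factor gain produces the $(1+c_0H\beta^{-1/4})C_{ED}$ bound.

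The hard part will be keeping the $\beta$-dependence sharp simultaneously on the symmetric $\Aa$-part and on the $L^2$-coupling $\La$-part, since Lemma~\ref{abilinearlessthan1} pairs a $C\beta^{1/4}c_0$ prefactor on the advection-diffusion energy with a competing $Cc_0\beta^{-1/4}$ prefactor on the $h$-norm remainder, and these must balance to yield the claimed $c_0(1+H\beta^{-1/2})$ scaling rather than a weaker $c_0\beta^{-1/2}$. Equally essential is invoking the edge-average-zero refinement from Assumption~\ref{assumpconstrain}: without Lemma~\ref{citeadlemma7.8}, the estimate would lose the subdomain-size factor and the preconditioner would cease to be robust in $\beta$ for small $H$.
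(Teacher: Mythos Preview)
Your ingredients are exactly right and you correctly identify Assumption~\ref{assumpconstrain} (through Lemma~\ref{citeadlemma7.8}) as the source of the $H$-gain. However, the order in which you apply Lemma~\ref{abilinearlessthan1} and the splitting $\E_{\D}\lambdae_\Gamma=\lambdae_\Gamma+(\E_{\D}-I)\lambdae_\Gamma$ is reversed, and this costs you the claimed constant in (\ref{CED}). If you first apply (\ref{less1AAd1}) to $\E_{\D}\lambdae_\Gamma$ and only afterwards split the $h$-norm remainder, the $\lambdae_\Gamma$-part of that remainder is
\[
Cc_0\beta^{-1/4}\|\lambdae_{\Aad,\Gamma}\|_h\leq Cc_0\beta^{-1/4}\interleave\lambdae_{\Aad,\Gamma}\interleave\leq Cc_0\beta^{-1/4}\cdot C\beta^{-1/4}\|\lambdae_\Gamma\|_{\widetilde{\S}_\Gamma}=Cc_0\beta^{-1/2}\|\lambdae_\Gamma\|_{\widetilde{\S}_\Gamma},
\]
where the chain Lemma~\ref{citeadlemma7.10}\,$\to$\,(\ref{ADestimateE})\,$\to$\,(\ref{great1AAd}) you propose cannot recover an $H$ factor because $\lambdae_\Gamma$ itself does \emph{not} have zero edge averages. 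You therefore obtain $c_0\beta^{-1/2}$ rather than $c_0H\beta^{-1/2}$, exactly the weaker bound you warn against in your final paragraph.

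The paper avoids this by splitting \emph{first}: set $\v_\Gamma=\E_{\D}\lambdae_\Gamma-\lambdae_\Gamma$, bound $\|\lambdae_\Gamma\|_{\widetilde{\S}_\Gamma}$ trivially, and apply (\ref{less1AAd1}) only to $\v_\Gamma$. Now \emph{both} terms on the right of (\ref{less1AAd1}) see a function with zero edge averages, so Lemma~\ref{citeadlemma7.8} upgrades the $h$-norm piece to $CH\interleave\v_{\Aad,\Gamma}\interleave$ and (\ref{ADestimateE}) merges it back with the energy piece, yielding $C(c_0\beta^{1/4}+c_0\beta^{-1/4}H)\|\v_\Gamma\|_{\widetilde{\S}^{\ad}_\Gamma}$; Lemma~\ref{CEDAad} and (\ref{great1AAd}) then produce $C_{ED}$ as stated. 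The same reordering is needed for (\ref{ED2}): split $\|\U(\E_{\D}\lambdae_\Gamma)_{\A,\Gamma}\|_{\T_h}$ \emph{before} invoking Lemma~\ref{UL2K}, so that the $\lambdae_\Gamma$-part is directly $\|\U\lambdae_{\A,\Gamma}\|_{\T_h}=\|\lambdae_\Gamma\|_{\widetilde{\L}_\Gamma}\leq\|\lambdae_\Gamma\|_{\widetilde{\M}_\Gamma}$, and only the jump $\v_\Gamma$ goes through (\ref{Uestimate3}), Lemma~\ref{citeadlemma7.8}, (\ref{Anormtriplrbar}) and (\ref{CED}) to pick up the $c_0H\beta^{-1/4}C_{ED}$ factor.
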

\begin{proof}
Let ${\bf v}_{\Gamma}=\E_{\D}\lambdae_\Gamma-\lambdae_\Gamma$, then we have 
$
\|{{\E}_{\D}}\lambdae_\Gamma\|^2_{\widetilde{\S}_\Gamma}\leq 
C\|{\bf v}_\Gamma\|^2_{\widetilde{\S}_\Gamma}
+C\|{\lambdae}_\Gamma\|^2_{\widetilde{\S}_\Gamma}.$
Since
\begin{equation}\label{less1SGammav}
\begin{aligned}
&\|{\bf v}_{\Gamma}\|_{\widetilde{\S}_\Gamma}=\|{\E}_{\C}({\v}_{\A,\Gamma})\|_{\Aa}\leq Cc_0\beta^{\frac14}\|{\bf E}_{\Aad}({\v}_{\Aad,\Gamma})\|_{\Aad}+C{(c_0\beta^{-\frac14})}\|{\v}_{\Aad,\Gamma}\|_h\\
&\leq Cc_0\beta^{\frac14}\|{\bf E}_{\Aad}({\v}_{\Aad})\|_{\Aad}+C{{(c_0\beta^{-\frac14})}}H\interleave{\v}_{\Aad,\Gamma}\interleave\\
&\leq C(c_0\beta^{\frac14}+c_0\beta^{-\frac14}H)\|{\bf E}_{\Aad}({\v}_{\Aad,\Gamma})\|_{\Aad} \\
&=C(c_0\beta^{\frac14}+c_0\beta^{-\frac14}H)\|{\v}_{\Gamma}\|_{\widetilde{\S}^{\ad}_\Gamma}\leq C (c_0\beta^{\frac14}+c_0\beta^{-\frac14}H)\left(1+\log\frac Hh\right)\|{\lambdae_\Gamma}\|_{\widetilde{\S}^{\bf ad}_\Gamma}\\
&=C(c_0\beta^{\frac14}+c_0\beta^{-\frac14}H)\left(1+\log\frac Hh\right)\|{\bf E}_{\Aad}({\lambdae}_{\Aad,\Gamma})\|_{\Aad}\\
&\leq C (c_0+c_0\beta^{-\frac12}H)\left(1+\log\frac Hh\right)\|{\bf E}_{\C}({\lambdae_{\A,\Gamma}})\|_{\Aa}\\
&= C(c_0+c_0\beta^{-\frac12}H)\left(1+\log\frac Hh\right) \|{\lambdae_\Gamma}\|_{\widetilde{\S}_\Gamma}:=CC_{ED} \|{\lambdae_\Gamma}\|_{\widetilde{\S}_\Gamma},  
\end{aligned}
\end{equation}
where $C_{ED}:=c_0(1+\beta^{-\frac12}H).$ We use Lemma~\ref{MSrelation} for the first equality,
~\eqref{less1AAd1} for the first inequality.
The second inequality follows from Lemma \ref{citeadlemma7.8} with the fact that 
\begin{equation}\label{vadhH}
\|{\v}_{\Aad,\Gamma}\|_h=\big(\sum\limits_{i=1}^N\|{\v}^{(i)}_{\Aad,\Gamma}\|^2_{h,\Omega_i}\big)^{\frac12}\leq C \big(\sum\limits_{i=1}^NH^2\interleave{\v}^{(i)}_{\Aad,\Gamma}\interleave^2_{\Omega_i}\big)^{\frac12}\leq CH\interleave{\v}_{\Aad,\Gamma}\interleave.
\end{equation}
The third inequality follows from~\eqref{ADestimateE}, 
{{the second and third equalities follow from~\eqref{lambdaAadAS}}}, 
the second to last inequality follows from Lemma~\ref{CEDAad}, and the last inequality follows from \eqref{great1AAd}. The second to last equality follows from~Lemma~\ref{MSrelation}. 
 
Moreover, we have
\begin{equation}\label{UAestimateED}
\begin{aligned}
\|{\bf U}{\bf v}_{\A,\Gamma}\|_{\T_h}&\leq Cc_0 \|{\bf v}_{\A,\Gamma}\|_{h}\leq  Cc_0H\interleave{\bf v}_{\A,\Gamma}\interleave\leq Cc_0 H\beta^{-\frac14}\|{\bf v}_\Gamma\|_{\widetilde{\S}_\Gamma}\\
&\leq Cc_0 H\beta^{-\frac14}C_{ED}\|\lambdae_\Gamma\|_{\widetilde{\S}_\Gamma},
\end{aligned}
\end{equation}
where ~\eqref{Uestimate3} is used for the first inequality. The second inequality can be obtained similarly as ~\eqref{vadhH}, the second to last inequality follows from~\eqref{Anormtriplrbar}, and the last inequality follows from~\eqref{less1SGammav}.

Therefore, by~Lemma~\ref{MSrelation},\eqref{less1SGammav} and~\eqref{UAestimateED}, we obtain that 
\begin{align*}
\|{\bf v}_{\Gamma}\|_{\widetilde{\M}_\Gamma}\leq C (1+c_0 H\beta^{-\frac14})C_{ED}\|\lambdae_\Gamma\|_{\widetilde{\S}_\Gamma},
\end{align*}
and 
\begin{align*}
\|{\bf E}_{\D}{\lambdae}_\Gamma\|_{\widetilde\M_\Gamma}\leq C\|{\bf v}_\Gamma\|_{\widetilde\M_\Gamma}+C\|{\lambdae}_\Gamma\|_{\widetilde\M_\Gamma}
\leq  C(1+c_0 H\beta^{-\frac14})C_{ED}\|\lambdae_\Gamma\|_{\widetilde{\M}_\Gamma}.
\end{align*}
\end{proof}

\subsection{Upper bound estimate for $C_u$ and lower bound estimate for $c_l$}
We make use of the dual argument together with the norm equivalence between the extensions of the advection-diffusion problem and the extensions of the optimal problem stated in Lemma~\ref{abilinearlessthan1}, to derive the following estimates for the nonsymmetric bilinear form.
\begin{lemma}\label{Zhatestimatetilde}
Given $\lambdae_\Gamma,\s_\Gamma\in \widetilde{\Lambdae}_\Gamma$, we have 
\begin{align}
\langle\lambdae_\Gamma,{\s}_\Gamma\rangle_{\widetilde{\bf Z}_\Gamma} &\leq C\gamma_1\|\lambdae_\Gamma\|_{\M_\Gamma}\left(C(H\beta^{-\frac14})\|\s_{\Gamma}\|_{\widetilde\S_\Gamma}+\|\s_{\Aad,\Gamma}\|_{h}\right),\label{ZNL2estimate2} 
\end{align}
where $\gamma_1=(1+\beta^{\frac14})c_0$ and $c_0$ is defined in~Lemma~\ref{UL2K}.
The same estimate holds for $\lambdae_\Gamma, \s_\Gamma \in \widehat{\Lambdae}_\Gamma$ analogously.
\end{lemma}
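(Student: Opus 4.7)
The plan is to expand $\langle\lambdae_\Gamma,\s_\Gamma\rangle_{\widetilde{\bf Z}_\Gamma}=\s^T_{\A,\Gamma}\widetilde{\bf Z}\lambdae_{\A,\Gamma}$ via~\eqref{sumoflocalbl} and~\eqref{zh2}, which decomposes the form into three structurally distinct groups of inner products on the optimal-control extensions: (i) the antisymmetric volume gradient terms $\pm\frac{\beta^{1/2}}{2}(U_{A_j}\cdot,\bz\cdot\nabla U_{A_j}\cdot)_{\T_h}$ for $j=1,2$; (ii) the boundary flux terms $\pm\frac{\beta^{1/2}}{2}\l\bz\cdot{\bf n}\cdot,U_{A_j}\cdot\r_{\partial\T_h}$; and (iii) the $L^2$ coupling terms $\pm(U_{A_1}\cdot,U_{A_2}\cdot)_{\T_h}$ produced by the state/adjoint interaction. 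Cauchy-Schwarz is then applied to each term, with the goal of producing a $\lambdae$-factor bounded by $C\gamma_1\|\lambdae_\Gamma\|_{\M_\Gamma}$ and an $\s$-factor bounded by $C(H\beta^{-1/4})\|\s_\Gamma\|_{\widetilde\S_\Gamma}+\|\s_{\Aad,\Gamma}\|_h$.

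The $\lambdae$-side bounds follow from the estimates already assembled: \eqref{gradient1ThM} combined with~\eqref{MaMGamma} gives $\beta^{1/2}\|\nabla U_{A_j}\lambdae_{\A,\Gamma}\|_{\T_h}\leq C(1+\beta^{1/4})\|\lambdae_\Gamma\|_{\M_\Gamma}$ for (i); \eqref{UAlambdaMestimate}, together with Assumption~\ref{eq:assump}(\ref{assump4}) to absorb the weight $|\tau_j\mp\tfrac12\bz\cdot{\bf n}|^{1/2}$, controls $\beta^{1/2}\|U_{A_j}\lambdae_{\A,\Gamma}-\lambda_{A,\Gamma}\|_{\partial\T_h}$ for (ii); and~\eqref{Uestimate2Th} yields $\|U_{A_j}\lambdae_{\A,\Gamma}\|_{\T_h}\leq Cc_0\beta^{-1/4}\|\lambdae_\Gamma\|_{\M_\Gamma}$ for (iii). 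Each resulting coefficient is subsumed in $\gamma_1=(1+\beta^{1/4})c_0$ after the $\beta^{1/2}$ in front of the bilinear form is redistributed across the two factors.

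The $\s$-side is where the main technical work happens. Each occurrence of $\|U_{A_j}\s_{\A,\Gamma}\|_{\T_h}$ is split via the triangle inequality $\|U_{A_j}\s_{\A,\Gamma}\|_{\T_h}\leq\|U_{A_j}\s_{\A,\Gamma}-U_{A^{ad}_j}(s^{A})\|_{\T_h}+\|U_{A^{ad}_j}(s^{A})\|_{\T_h}$, where $s^{A}$ is the relevant component of $\s_{\A,\Gamma}$. The first piece is bounded by the OC-AD comparison~\eqref{UUaddifference} followed by~\eqref{Uestimate2Th} to yield $Chc_0\beta^{-3/4}\|\s_\Gamma\|_{\widetilde\S_\Gamma}$; the inequality $h\leq H$ converts this into the $H\beta^{-1/4}\|\s_\Gamma\|_{\widetilde\S_\Gamma}$ contribution once the $\gamma_1$ prefactor is factored out. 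The second piece is bounded by $C\|s^{A}\|_h$ through~\eqref{UA1L2}-\eqref{UA2L2}; to replace $\|s^{A}\|_h$ by $\|\s_{\Aad,\Gamma}\|_h$ I would further decompose $s^{A}=s^{Aad}+(s^{A}-s^{Aad})$, and since $s^{A}-s^{Aad}$ vanishes on $\Gamma$, Lemma~\ref{citeadlemma7.8} together with the norm equivalence~\eqref{great1AAd} absorbs the residual into the $\|\s_\Gamma\|_{\widetilde\S_\Gamma}$ branch. The boundary terms in group (ii) are treated by the same splitting combined with an inverse trace inequality.

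The main obstacle is that the $L^2$ coupling terms in group (iii) have no analog in the pure advection-diffusion analysis of~\cite{TZAD2021}: the triple-bar seminorm used there cannot control the $L^2$ norm $\|U_{A_j}\cdot\|_{\T_h}$, which is precisely why Theorem~\ref{theorem:EES} is formulated in the fuller $\M$-norm rather than an $H^1$-type seminorm. Carefully pairing the $\beta^{-1/4}$ from~\eqref{Uestimate2Th} with the $h\beta^{-1/2}$ from~\eqref{UUaddifference} so that the compound scaling collapses into the $H\beta^{-1/4}$ of the statement, while simultaneously keeping the $\lambdae$-side under $\gamma_1\|\lambdae_\Gamma\|_{\M_\Gamma}$, is the delicate bookkeeping that must be verified term by term. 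The claim for $\lambdae_\Gamma,\s_\Gamma\in\widehat\Lambdae_\Gamma$ follows by repeating the argument with the assembled extensions, since every ingredient, especially Lemma~\ref{MSrelation} and~\eqref{MaMGamma}, holds equally on $\widehat\Lambdae_\Gamma$.
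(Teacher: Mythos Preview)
Your outline has a genuine gap in group (i). In the antisymmetric pair you list, one of the two terms is $-\tfrac{\beta^{1/2}}{2}(U_{A_1}\lambdae_{\A,\Gamma},\bz\cdot\nabla U_{A_1}\s_{\A,\Gamma})_{\T_h}$, where the gradient sits on $\s$. A direct Cauchy--Schwarz leaves an $\s$-factor of the form $\beta^{1/2}\|\nabla U_{A_1}\s_{\A,\Gamma}\|_{\T_h}$ (or, after an inverse inequality, $\beta^{1/2}h^{-1}\|U_{A_1}\s_{\A,\Gamma}\|_{\T_h}$), and neither can be dominated by $H\beta^{-1/4}\|\s_\Gamma\|_{\widetilde\S_\Gamma}+\|\s_{\Aad,\Gamma}\|_h$: the first is essentially $\|\s_\Gamma\|_{\widetilde\M_\Gamma}$ via~\eqref{gradient1ThM}, and the second blows up as $h\to 0$ for $\beta=O(1)$. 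The same problem appears in your group (ii) with the boundary term $\tfrac{\beta^{1/2}}{2}\l\bz\cdot{\bf n}\lambda_{A,\Gamma},U_{A_1}\s_{\A,\Gamma}-s_{A,\Gamma}\r_{\partial\T_h}$, whose $\s$-factor through~\eqref{UAlambdaMestimate} again returns $\|\s_\Gamma\|_{\widetilde\M_\Gamma}$. The paper avoids this by first integrating by parts elementwise (using $\nabla\cdot\bz=0$) so that the two gradient terms merge into the single term $\beta^{1/2}(U_{A_1}\s_{\A,\Gamma},\bz\cdot\nabla U_{A_1}\lambdae_{\A,\Gamma})_{\T_h}$; after this rewriting every occurrence of $\nabla U$ and of $U-\lambda$ lives on the $\lambdae$-side, and the only new boundary contribution is the pure term $\tfrac{\beta^{1/2}}{2}\l(\bz\cdot{\bf n})s_{A,\Gamma},\lambda_{A,\Gamma}\r_{\partial\T_h}$, which is handled by Lemma~\ref{bdulambda} (i.e.\ by Assumption~\ref{assumpconstrain}) to produce the $H\|{\bf E}_{\C}(\s_{\A,\Gamma})\|_{\Aa}$ contribution. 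You never invoke either the integration by parts or Lemma~\ref{bdulambda}.

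A second, smaller issue is your $\s$-side bookkeeping for $\|U_{A_j}\s_{\A,\Gamma}\|_{\T_h}$. The detour through~\eqref{UUaddifference} followed by~\eqref{Uestimate2Th} gives $Chc_0\beta^{-3/4}\|\s_\Gamma\|_{\widetilde\S_\Gamma}$, and your claim that ``$h\le H$ converts this into the $H\beta^{-1/4}\|\s_\Gamma\|_{\widetilde\S_\Gamma}$ contribution once the $\gamma_1$ prefactor is factored out'' would require $h\beta^{-1/2}\le C(1+\beta^{1/4})H$, which fails for small $\beta$. The paper bypasses this split entirely: it bounds $\|U_{A_j}\s_{\A,\Gamma}\|_{\T_h}\le Cc_0\|\s_{\A,\Gamma}\|_h$ directly by~\eqref{Uestimate3}, and then applies Lemma~\ref{UlambdaAboundary} (which is precisely your $s^{A}=s^{Aad}+(s^{A}-s^{Aad})$ idea, packaged as a lemma) to pass from $\|\s_{\A,\Gamma}\|_h$ to $H\beta^{-1/4}\|\s_\Gamma\|_{\widetilde\S_\Gamma}+\|\s_{\Aad,\Gamma}\|_h$.
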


Given $\lambdae_\Gamma\in \widehat\Lambdae_\Gamma$, define $ \w_\Gamma=\widetilde{\S}^{-1}_\Gamma\widetilde{\R}_{\D,\Gamma}\widehat\S_\Gamma\lambdae_\Gamma$. Different from the extension $\U_{\Aad}\lambdae$ studied in~\cite{TZAD2021}, the extension operator $\U$ of the optimal control problem does not preserve constants; that is, even if \(\lambdae\) is constant on the mesh element, \(\U\lambdae\) is not a constant, as seen from~\eqref{QUAAad}. This leads to the difficulty of estimating the $h$-norm $\|\w_{\A,\Gamma}-\lambdae_{\A,\Gamma}\|_h$directly using the approach in \cite[Lemma 7.20]{TZAD2021}. To overcome this difficulty, 
we will use a dual argument and  estimate the $h$-norm $\|{\bf w}_{\Aad,\Gamma} - \lambdae_{\Aad,\Gamma}\|_h $ as in the following lemma. 
\begin{lemma}\label{wdnormS}
Given $\lambdae_\Gamma\in \widehat\Lambdae_\Gamma$, let $\mathcal{H}_{\Aad}(\lambdae)=\lambdae_{\Aad,\Gamma}$, we have
\begin{align}
\|\mathcal{H}_{\Aad}({\bf w}_\Gamma - \lambdae_\Gamma)\|_h 
&\leq C \alpha_1 \left( \|\w_\Gamma\|_{\widetilde\M_\Gamma} + \|\lambdae_\Gamma\|_{\widehat\S_\Gamma} \right),  \label{TAdnormSw1}\\
\|\mathcal{H}_{\Aad}({\bf T} \lambdae_\Gamma - \lambdae_\Gamma)\|_h 
&\leq C \alpha_2 \left( \|\w_\Gamma\|_{\widetilde\M_\Gamma} + \|\lambdae_\Gamma\|_{\widehat\S_\Gamma} \right), \label{TAdnormSw2}
\end{align}
where $\alpha_1=H\big((1+\beta^{-\frac12})c^2_0+\beta^{-\frac14}\big), \alpha_2=H\big((1+\beta^{-\frac12})c_0^2+\beta^{-\frac14}C_{ED}\big)$, $c_0$ is defined in Lemma~\ref{UL2K} and $C_{ED}$ is defined in Lemma~\ref{averagelarger1}.
\end{lemma}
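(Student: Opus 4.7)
The plan is to adapt the duality-based approach of~\cite[Lemma 7.20]{TZAD2021} to the coupled optimal-control system, carefully tracking the additional contributions produced by the off-diagonal $L^2$-coupling between state and adjoint components. Throughout I would work with the advection--diffusion extension $\mathcal{H}_{\Aad}$ rather than the direct optimal-control extension, since, as pointed out just before the lemma, the latter does not preserve constants and so precludes the element-level estimate used in~\cite{TZAD2021}.

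First, I would set $v_\Gamma := \w_\Gamma - \widetilde{\R}_\Gamma\lambdae_\Gamma \in \widetilde\Lambdae_\Gamma$ and exploit the defining equation $\widetilde\S_\Gamma\w_\Gamma = \widetilde{\R}_{\D,\Gamma}\widehat\S_\Gamma\lambdae_\Gamma$ together with $\widetilde\S_\Gamma = \overline{\R}_\Gamma^T{\S}_\Gamma\overline{\R}_\Gamma$ to obtain a residual identity of the form $\langle v_\Gamma,\s_\Gamma\rangle_{\widetilde\S_\Gamma} = \langle(\widetilde{\R}_{\D,\Gamma}-\widetilde{\R}_\Gamma)\widehat\S_\Gamma\lambdae_\Gamma,\s_\Gamma\rangle$ valid for every test $\s_\Gamma\in\widetilde\Lambdae_\Gamma$. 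To bound $\|\mathcal{H}_{\Aad}(v_\Gamma)\|_h$ I would then introduce an auxiliary dual advection--diffusion pair whose HDG solution, when used as the test $\s_\Gamma$, extracts the $h$-norm through an $L^2$-pairing with $U_{A^{ad}_i}(v_{\Aad,\Gamma})$; the consistency relation~\eqref{equation:AmatrixQU} and the norm equivalence in Lemma~\ref{abilinearlessthan1} then convert every energy-type contribution into a bound by $\|\w_\Gamma\|_{\widetilde\M_\Gamma}+\|\lambdae_\Gamma\|_{\widehat\S_\Gamma}$. The nonsymmetric contribution is dispatched by Lemma~\ref{Zhatestimatetilde}, supplying the $\gamma_1=(1+\beta^{1/4})c_0$ weight; the new $L^2$-coupling cross-terms $(U_{A_1}\lambdae,U_{A_2}\s)_{\T_h}$ appearing in ${\bf z}_h$, which have no analogue in~\cite{TZAD2021}, are absorbed through~\eqref{Uestimate2Th}, producing the $(1+\beta^{-1/2})c_0^2$ piece of $\alpha_1$; and a subdomain-wise Poincar\'e bound as in~\eqref{vadhH} (via Lemma~\ref{citeadlemma7.8}) supplies the overall factor $H$, completing~\eqref{TAdnormSw1}.

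For~\eqref{TAdnormSw2} I would use $\Tb\lambdae_\Gamma = \widetilde{\R}_{\D,\Gamma}^T\w_\Gamma$ together with the partition-of-unity property~\eqref{scalingD} to write $\Tb\lambdae_\Gamma - \lambdae_\Gamma = \widetilde{\R}_{\D,\Gamma}^T v_\Gamma$, and then split $v_\Gamma = (v_\Gamma - \E_{\D}v_\Gamma) + \E_{\D}v_\Gamma$. The first summand reduces to~\eqref{TAdnormSw1}, while the averaged piece picks up the extra factor $C_{ED}$ coming from Lemma~\ref{averagelarger1}; combining the two yields the $\alpha_2$ bound.

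The main obstacle is the bookkeeping of the new $L^2$-coupling terms generated by the coupled state--adjoint system: every occurrence of an $L^2$ inner product $(U_{A_i}\cdot,U_{A_j}\cdot)_{\T_h}$ must be absorbed via the bound $\|\U\lambdae\|_{\T_h}\leq Cc_0\|\lambdae\|_h$ from Lemma~\ref{UL2K}, which is precisely what introduces the factor $c_0$ propagating through every step. Any inconsistent treatment of these couplings would destroy the $\beta$-robustness of the iteration count that Theorem~\ref{theorem:EES} ultimately relies upon.
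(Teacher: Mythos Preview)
Your proposal captures the right high-level philosophy (use the advection--diffusion extension because $\U$ does not preserve constants, and employ a duality argument), but there is a genuine gap at the crucial step where the factor $H$ is produced.

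The dominant contribution $H(1+\beta^{-1/2})c_0^2$ in $\alpha_1$ does \emph{not} come from a subdomain Poincar\'e bound on $v_{\Aad,\Gamma}$ as you suggest; that route only handles the auxiliary difference $\mathcal{H}_{\Aad}-\mathcal{H}_{\A}$, which vanishes on subdomain boundaries and contributes the smaller term $H\beta^{-1/4}$. The main term arises from the estimate
\[
\|\U(\w_{\A,\Gamma}-\lambdae_{\A,\Gamma})\|_{\T_h}\le CHc_0(1+\beta^{-1/2})\|\w_\Gamma\|_{\widetilde\M_\Gamma},
\]
which is proved in the paper as a separate lemma (Lemma~\ref{Ulambdaw}) via a genuine Aubin--Nitsche argument: one introduces the continuous dual coupled system~\eqref{eq:sualfsystem}, uses its $H^2$ regularity~\eqref{dual}, compares the discrete solutions $\vpg$ and $\vpgt$ of~\eqref{eqdual1}--\eqref{eqdual2} through Lemmas~\ref{BygH2}--\ref{Bwlambdae0}, and exploits the Galerkin-type orthogonality of Lemma~\ref{Bwlambdae0}. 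None of this machinery appears in your plan, and without it you cannot extract the $H$ factor from the $L^2$ term.

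Two further points. First, Lemma~\ref{Zhatestimatetilde} is \emph{not} used in the proof of the present lemma; invoking it here mixes up the roles (it is used later, in Lemma~\ref{Chigamma}, where the present lemma is already available). Second, the ``auxiliary dual advection--diffusion pair'' you mention is too vague: the paper actually converts $\|\cdot\|_h$ into a testable pairing via the lifting operators $S_i^K$ and the $S$-norm of~\eqref{Snorm1}--\eqref{normexpansion} (a generalization of~\cite[Lemma~5.7]{CDGT2014}), then splits $(\mathcal{H}_{\Aad}(\w_\Gamma-\lambdae_\Gamma),\s)_S$ into a high-order remainder $(I-Q_k)$ piece and a main piece that, after adding and subtracting $\mathcal{H}_\A$ and $\U$, reduces to Lemma~\ref{Ulambdaw}. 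Your decomposition for~\eqref{TAdnormSw2} is essentially correct in spirit, but the paper does it subdomain-wise as $(\widehat{\R}_\Gamma^{(i)}\widetilde{\R}_{\D,\Gamma}^T\w_\Gamma-\overline{\R}_\Gamma^{(i)}\w_\Gamma)+(\overline{\R}_\Gamma^{(i)}\w_\Gamma-\widehat{\R}_\Gamma^{(i)}\lambdae_\Gamma)$, so that the first piece has zero edge average and Lemma~\ref{citeadlemma7.8} applies directly before invoking the averaging bound~\eqref{less1SGammav} to produce $C_{ED}$.
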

\begin{lemma}\label{Chigamma}
For any  $\lambdae_\Gamma\in\widehat{\Lambdae}_\Gamma,$when $H$ is sufficiently small,
let $\z_\Gamma=\widetilde{\L}^{-1}_\Gamma\widetilde{\bf R}_{\D,\Gamma}{\L}_\Gamma\lambdae_\Gamma$, we have 
the following estimates
\begin{align}
|\l\w_\Gamma,\z_\Gamma-
\widetilde{\R}_\Gamma \lambdae_\Gamma\r_{\widetilde\L_\Gamma}|&\leq Cc_1\|\lambdae_\Gamma\|_{{\L}_\Gamma}\|{\bf w}_\Gamma\|_{\widetilde{\M}_\Gamma}\label{wlambdaL1},\\
|\l\lambdae_\Gamma, {\bf T}\lambdae_{\Gamma}-\lambdae_{\Gamma}\r_{\Z_\Gamma}|&\leq Cc_2\|\lambdae\|_{\M_\Gamma} (\|\w_\Gamma\|_{\widetilde\M_\Gamma}+\|\lambdae_\Gamma\|_{\M_\Gamma}),\label{wlambdaL4}\\
\|\w_\Gamma-\widetilde{\R}_\Gamma\lambdae_{\Gamma}\|_{\widetilde\L_\Gamma}
&\leq Cc_3\|\w_\Gamma\|_{\widetilde{\M}_\Gamma},
\label{wlambdaL2}\\
|\langle\w_\Gamma,\widetilde{\R}_\Gamma{\lambdae}_\Gamma-\w_\Gamma\rangle_{\widetilde{\Z}_\Gamma}|&\leq 
Cc_4\|\w\|_{\widetilde{\M}_\Gamma}(\|\w\|_{\widetilde{\M}_\Gamma}+\|\lambdae_{\Gamma}\|_{\M_\Gamma}),
\label{wlambdaL3}
\end{align}
where $c_1=c_0H\beta^{-\frac14}C_{ED},c_2=\gamma_1\alpha_2, c_3=c_0H(1+\beta^{-\frac12}), c_4=\gamma_1 \alpha_1$. $\alpha_1, \alpha_2$ are defined in Lemma~\ref{wdnormS}. $\gamma_1$ is defined in Lemma~\ref{Zhatestimatetilde}. 
\end{lemma}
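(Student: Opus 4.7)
The plan is to prove the four displayed estimates by assembling the ingredients from Lemmas~\ref{lambdaetriple}--\ref{wdnormS}: the norm equivalence between $\interleave\cdot\interleave$ and $\|\cdot\|_{\Aa}$, the $L^2$-bound for $\U\lambdae$, the $\A$/$\Aad$ extension comparison~\eqref{UUaddifference}, the average-operator estimate, the nonsymmetric-form bound, and the $h$-norm estimate for the BDDC correction. Estimates~\eqref{wlambdaL2} and~\eqref{wlambdaL1} are handled via extension plus Cauchy--Schwarz in $\widetilde\L_\Gamma$, while~\eqref{wlambdaL4} and~\eqref{wlambdaL3} follow by a direct application of Lemma~\ref{Zhatestimatetilde} and Lemma~\ref{wdnormS}.

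For~\eqref{wlambdaL2}, set $\e_\Gamma=\w_\Gamma-\widetilde{\R}_\Gamma\lambdae_\Gamma\in\widetilde\Lambdae_\Gamma$ and write $\|\e_\Gamma\|_{\widetilde\L_\Gamma}=\|\U\e_{\A,\Gamma}\|_{\T_h}$. Lemma~\ref{UL2K} gives $\|\U\e_{\A,\Gamma}\|_{\T_h}\le Cc_0\|\e_{\A,\Gamma}\|_h$. Because $\w_\Gamma$ and $\widetilde{\R}_\Gamma\lambdae_\Gamma$ share the same primal values (which, by Assumption~\ref{assumpconstrain}, include the relevant edge averages), $\e^{(i)}_{\A,\Gamma}$ has zero edge average on $\partial\Omega_i$, so Lemma~\ref{citeadlemma7.8} applies subdomain-wise to give $\|\e_{\A,\Gamma}\|_h\le CH\interleave\e_{\A,\Gamma}\interleave$. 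Lemmas~\ref{lambdaetriple} and~\ref{MSrelation} then convert this to $CH\beta^{-1/4}\|\e_\Gamma\|_{\widetilde\S_\Gamma}$. Finally, testing the identity $\widetilde\S_\Gamma\w_\Gamma=\widetilde{\R}_{\D,\Gamma}\widehat\S_\Gamma\lambdae_\Gamma$ against $\widetilde{\R}_\Gamma\lambdae_\Gamma$ and using~\eqref{scalingD} gives $\|\lambdae_\Gamma\|^2_{\widehat\S_\Gamma}=(\widetilde{\R}_\Gamma\lambdae_\Gamma)^T\widetilde\S_\Gamma\w_\Gamma$, whence by Cauchy--Schwarz together with Lemma~\ref{Zhatestimatetilde} to control the nonsymmetric contribution, $\|\e_\Gamma\|_{\widetilde\S_\Gamma}\le C(1+\beta^{-1/4})\|\w_\Gamma\|_{\widetilde\M_\Gamma}$, yielding the claimed factor $c_3$. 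Estimate~\eqref{wlambdaL1} follows the same template: Cauchy--Schwarz splits the pairing into $\|\w_\Gamma\|_{\widetilde\L_\Gamma}\le\|\w_\Gamma\|_{\widetilde\M_\Gamma}$ and $\|\z_\Gamma-\widetilde{\R}_\Gamma\lambdae_\Gamma\|_{\widetilde\L_\Gamma}$; the second factor is then bounded exactly as above but with $\L_\Gamma$ replacing $\widehat\S_\Gamma$, which forces the use of the average-operator estimate of Lemma~\ref{averagelarger1} to extract the $C_{ED}$ factor that completes $c_1=c_0H\beta^{-1/4}C_{ED}$.

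For~\eqref{wlambdaL4}, I apply Lemma~\ref{Zhatestimatetilde} with $\s_\Gamma=\mathbf{T}\lambdae_\Gamma-\lambdae_\Gamma$; the $\widetilde\S_\Gamma$-part of $\s_\Gamma$ is controlled by $\|\w_\Gamma\|_{\widetilde\M_\Gamma}+\|\lambdae_\Gamma\|_{\M_\Gamma}$, while the $h$-norm of its $\Aad$-extension is furnished by~\eqref{TAdnormSw2}, giving the factor $\alpha_2$ and hence $c_2=\gamma_1\alpha_2$. Estimate~\eqref{wlambdaL3} is entirely symmetric but uses the pair $(\w_\Gamma,\widetilde{\R}_\Gamma\lambdae_\Gamma-\w_\Gamma)$ and invokes~\eqref{TAdnormSw1} in place of~\eqref{TAdnormSw2}, producing $\alpha_1$ in place of $\alpha_2$ and yielding $c_4=\gamma_1\alpha_1$. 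The principal technical obstacle is the mismatch between the two extensions: the $\widetilde\L_\Gamma$-norm and the $\U$ operator are intrinsically tied to the $\A$-extension, whereas Lemma~\ref{wdnormS} and the $h$-norm estimates on the BDDC correction are only available for the $\Aad$-extension. This is bridged by~\eqref{UUaddifference}, whose factor $h\beta^{-1/2}$ is small under the standing assumption that $H$ is sufficiently small, enabling an absorption argument that effectively identifies $\|\U\e_{\A,\Gamma}\|_{\T_h}$ with $\|\U_{\Aad}\e_{\Aad,\Gamma}\|_{\T_h}$ up to lower-order terms; this same smallness ensures that the products of the auxiliary constants $c_0,C_{ED},\alpha_i,\gamma_1$ reduce to the stated $c_1,c_2,c_3,c_4$ without spurious growth in $\beta^{-1}$.
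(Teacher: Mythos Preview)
Your treatment of \eqref{wlambdaL4} and \eqref{wlambdaL3} is essentially the paper's argument and is fine. The gaps are in \eqref{wlambdaL1} and \eqref{wlambdaL2}.

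For \eqref{wlambdaL2} your route through $\|\U\e_{\A,\Gamma}\|_{\T_h}\le Cc_0\|\e_{\A,\Gamma}\|_h\le Cc_0H\interleave\e_{\A,\Gamma}\interleave$ fails at two points. First, Lemma~\ref{citeadlemma7.8} requires zero edge average on $\partial\Omega_i$, but $\e_\Gamma=\w_\Gamma-\widetilde\R_\Gamma\lambdae_\Gamma$ does \emph{not} vanish on the primal constraints: $\w_\Gamma$ and $\widetilde\R_\Gamma\lambdae_\Gamma$ are unrelated elements of $\widetilde\Lambdae_\Gamma$ with different primal values, so the subdomain Poincar\'e step is illegitimate. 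Second, your claimed bound $\|\e_\Gamma\|_{\widetilde\S_\Gamma}\le C(1+\beta^{-1/4})\|\w_\Gamma\|_{\widetilde\M_\Gamma}$ does not follow from the testing identity: after separating $\widetilde\S_\Gamma=\widetilde\B_\Gamma+\widetilde\Z_\Gamma$, Lemma~\ref{Zhatestimatetilde} leaves a residual term $\|\lambdae_{\Aad,\Gamma}\|_h$ that is an $L^2$-type quantity of $\lambdae_\Gamma$ and cannot be absorbed into $\|\w_\Gamma\|_{\widetilde\M_\Gamma}$. The paper's proof of \eqref{wlambdaL2} is a one-line citation of Lemma~\ref{Ulambdaw}, an Aubin--Nitsche duality estimate that exploits the Galerkin orthogonality of Lemma~\ref{Bwlambdae0} and the regularity~\eqref{dual} to obtain $\|\U(\w_{\A,\Gamma}-\lambdae_{\A,\Gamma})\|_{\T_h}\le Cc_0H(1+\beta^{-1/2})\|\w_\Gamma\|_{\widetilde\M_\Gamma}$ directly, with only $\|\w_\Gamma\|$ on the right. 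This duality ingredient is the missing idea.

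For \eqref{wlambdaL1} your direct Cauchy--Schwarz leaves you to bound $\|\z_\Gamma-\widetilde\R_\Gamma\lambdae_\Gamma\|_{\widetilde\L_\Gamma}$ by $c_1\|\lambdae_\Gamma\|_{\L_\Gamma}$; pushing through your chain $\widetilde\L_\Gamma\to h\text{-norm}\to\interleave\cdot\interleave\to\widetilde\S_\Gamma$ would ultimately require controlling an $H^1$-type norm of $\z_\Gamma-\widetilde\R_\Gamma\lambdae_\Gamma$ by the $L^2$-type norm $\|\lambdae_\Gamma\|_{\L_\Gamma}$, which goes the wrong way. The paper instead uses the algebraic transposition
\[
\l\w_\Gamma,\z_\Gamma-\widetilde\R_\Gamma\lambdae_\Gamma\r_{\widetilde\L_\Gamma}
=\l\E_\D\w_\Gamma-\w_\Gamma,\widetilde\R_\Gamma\lambdae_\Gamma\r_{\widetilde\L_\Gamma},
\]
obtained by writing $\z_\Gamma=\widetilde\L_\Gamma^{-1}\E_\D^T\widetilde\L_\Gamma\widetilde\R_\Gamma\lambdae_\Gamma$. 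This moves the $(\E_\D-I)$ factor onto $\w_\Gamma$, where it \emph{does} have zero edge averages on $\partial\Omega_i$, so Lemma~\ref{citeadlemma7.8} applies and the average-operator estimate~\eqref{CED} produces the $C_{ED}$ factor against $\|\w_\Gamma\|_{\widetilde\S_\Gamma}$, while Cauchy--Schwarz pairs it with $\|\lambdae_\Gamma\|_{\L_\Gamma}$.
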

{{Recall that ${\bf T}=\widetilde{\R}^T_{\D,\Gamma}\widetilde{\S}^{-1}_\Gamma\widetilde{\R}_{\D,\Gamma}\widehat{\S}_\Gamma$, we proceed to derive the upper bound $C_u$ and lower bound $c_l$.}}
\begin{lemma}\label{upperbbound}
Given $\lambdae_\Gamma\in \widehat\Lambdae_\Gamma$, when $H$ is sufficient small, we have
\begin{align*}
\langle{\bf T}\lambdae_\Gamma,{\bf T}{\lambdae}_\Gamma\rangle_{{\M}_\Gamma} &\leq C^2_u\l\lambdae_\Gamma,\lambdae_\Gamma\r_{\M_\Gamma},
\end{align*}
where $C_u=C{{C}^2_{ED,M}}$. $C_{ED,M}$ is defined in Lemma~\ref{averagelarger1}.
\end{lemma}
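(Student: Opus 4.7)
The plan is to bound $\|{\bf T}\lambdae_\Gamma\|_{\M_\Gamma}$ in two steps. The first step reduces the estimate to a bound on $\|\w_\Gamma\|_{\widetilde{\M}_\Gamma}$ via the average operator~$\E_\D$ and Lemma~\ref{averagelarger1}. The second step bounds $\|\w_\Gamma\|_{\widetilde{\M}_\Gamma}$ by $\|\lambdae_\Gamma\|_{\M_\Gamma}$ through an energy identity combined with the auxiliary estimates in Lemma~\ref{Chigamma}. The smallness of $H$ is used precisely to make $c_2$ and $c_3$ from Lemma~\ref{Chigamma} small enough to run an absorption argument.

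For the first step, from ${\bf T}\lambdae_\Gamma=\widetilde{\R}^T_{\D,\Gamma}\w_\Gamma$ and the definition of $\E_\D$, the identity $\widetilde{\R}_\Gamma{\bf T}\lambdae_\Gamma=\E_\D\w_\Gamma$ holds. Combined with the norm compatibility $\|\mu_\Gamma\|_{\M_\Gamma}=\|\widetilde{\R}_\Gamma\mu_\Gamma\|_{\widetilde{\M}_\Gamma}$ for $\mu_\Gamma\in\widehat{\Lambdae}_\Gamma$, which follows from~\eqref{MaMGamma}, and estimate~\eqref{ED2}, this yields
\[
\|{\bf T}\lambdae_\Gamma\|_{\M_\Gamma}=\|\E_\D\w_\Gamma\|_{\widetilde{\M}_\Gamma}\leq C_{ED,M}\|\w_\Gamma\|_{\widetilde{\M}_\Gamma}.
\]
It therefore suffices to prove $\|\w_\Gamma\|_{\widetilde{\M}_\Gamma}\leq C\,C_{ED,M}\|\lambdae_\Gamma\|_{\M_\Gamma}$.

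For the second step, Lemma~\ref{MSrelation} gives the splitting $\|\w_\Gamma\|^2_{\widetilde{\M}_\Gamma}=\|\w_\Gamma\|^2_{\widetilde{\S}_\Gamma}+\|\w_\Gamma\|^2_{\widetilde{\L}_\Gamma}$. The $\widetilde{\L}_\Gamma$-contribution is controlled via~\eqref{wlambdaL2}, the triangle inequality, and $\|\widetilde{\R}_\Gamma\lambdae_\Gamma\|_{\widetilde{\L}_\Gamma}=\|\lambdae_\Gamma\|_{\L_\Gamma}\leq\|\lambdae_\Gamma\|_{\M_\Gamma}$, giving $\|\w_\Gamma\|_{\widetilde{\L}_\Gamma}\leq Cc_3\|\w_\Gamma\|_{\widetilde{\M}_\Gamma}+\|\lambdae_\Gamma\|_{\M_\Gamma}$. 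For the $\widetilde{\S}_\Gamma$-contribution, the defining equation $\widetilde{\S}_\Gamma\w_\Gamma=\widetilde{\R}_{\D,\Gamma}\widehat{\S}_\Gamma\lambdae_\Gamma$, combined with $\widetilde{\R}^T_{\D,\Gamma}\w_\Gamma={\bf T}\lambdae_\Gamma$ and $\langle\lambdae_\Gamma,\lambdae_\Gamma\rangle_{\Z_\Gamma}=0$, produces the energy identity
\[
\|\w_\Gamma\|^2_{\widetilde{\S}_\Gamma}=\langle\lambdae_\Gamma,{\bf T}\lambdae_\Gamma\rangle_{\widehat{\S}_\Gamma}=\langle\lambdae_\Gamma,{\bf T}\lambdae_\Gamma\rangle_{\B_\Gamma}+\langle\lambdae_\Gamma,{\bf T}\lambdae_\Gamma-\lambdae_\Gamma\rangle_{\Z_\Gamma}.
\]
Cauchy--Schwarz on the $\B_\Gamma$-part bounds the first summand by $\|\lambdae_\Gamma\|_{\M_\Gamma}\|{\bf T}\lambdae_\Gamma\|_{\M_\Gamma}$, and~\eqref{wlambdaL4} controls the second.

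Inserting these estimates, squaring the $\widetilde{\L}_\Gamma$ bound, and substituting the step-one inequality $\|{\bf T}\lambdae_\Gamma\|_{\M_\Gamma}\leq C_{ED,M}\|\w_\Gamma\|_{\widetilde{\M}_\Gamma}$ produces an inequality of the form
\[
\|\w_\Gamma\|^2_{\widetilde{\M}_\Gamma}\leq (C_{ED,M}+Cc_2)\,\|\lambdae_\Gamma\|_{\M_\Gamma}\|\w_\Gamma\|_{\widetilde{\M}_\Gamma}+2C^2c_3^2\,\|\w_\Gamma\|^2_{\widetilde{\M}_\Gamma}+(Cc_2+2)\,\|\lambdae_\Gamma\|^2_{\M_\Gamma}.
\]
For $H$ sufficiently small, $c_2$ and $c_3$ are small, so Young's inequality applied to the cross term together with absorption of the $2C^2c_3^2\|\w_\Gamma\|^2_{\widetilde{\M}_\Gamma}$ contribution into the left-hand side yields $\|\w_\Gamma\|_{\widetilde{\M}_\Gamma}\leq C\,C_{ED,M}\|\lambdae_\Gamma\|_{\M_\Gamma}$. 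Combined with the first step, this gives the desired bound with $C_u=C\,C^2_{ED,M}$. The main obstacle is the self-referential character of~\eqref{wlambdaL4} and~\eqref{wlambdaL2}, both of which feature $\|\w_\Gamma\|_{\widetilde{\M}_\Gamma}$ on the right-hand side; closing the estimate relies essentially on the smallness of $H$ to render $c_2$ and $c_3$ small enough for the absorption to succeed.
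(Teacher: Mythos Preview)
Your proposal is correct and follows essentially the same approach as the paper: the identity $\|{\bf T}\lambdae_\Gamma\|_{\M_\Gamma}=\|\E_\D\w_\Gamma\|_{\widetilde{\M}_\Gamma}$ combined with \eqref{ED2}, the splitting $\|\w_\Gamma\|^2_{\widetilde{\M}_\Gamma}=\|\w_\Gamma\|^2_{\widetilde{\S}_\Gamma}+\|\w_\Gamma\|^2_{\widetilde{\L}_\Gamma}$, the energy identity $\|\w_\Gamma\|^2_{\widetilde{\S}_\Gamma}=\langle\lambdae_\Gamma,{\bf T}\lambdae_\Gamma\rangle_{\widehat{\S}_\Gamma}$, and the estimates \eqref{wlambdaL2} and \eqref{wlambdaL4} followed by absorption via smallness of $c_2,c_3$. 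The only difference is the order in which the pieces are assembled---you substitute $\|{\bf T}\lambdae_\Gamma\|_{\M_\Gamma}\le C_{ED,M}\|\w_\Gamma\|_{\widetilde{\M}_\Gamma}$ back into the energy identity to close on $\|\w_\Gamma\|_{\widetilde{\M}_\Gamma}$ first, whereas the paper first bounds $\|\w_\Gamma\|^2_{\widetilde{\B}_\Gamma}$ in terms of $\|\lambdae_\Gamma\|_{\M_\Gamma}\|{\bf T}\lambdae_\Gamma\|_{\M_\Gamma}$ and then closes on $\|{\bf T}\lambdae_\Gamma\|_{\M_\Gamma}$; both routes yield the same $C_u=C\,C_{ED,M}^2$.
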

\begin{proof}
By~\eqref{lambdauBwilde} and~\eqref{wlambdaL2}, we have
\begin{align*}
\|\w_\Gamma\|^2_{\widetilde\M_\Gamma}&= \|\w_\Gamma\|^2_{\widetilde\B_\Gamma}+\|\w_\Gamma\|^2_{\widetilde\L_\Gamma}\leq 
\|\w_\Gamma\|^2_{\widetilde\B_\Gamma}+\|\w_\Gamma-\widetilde\R_\Gamma\lambdae_\Gamma\|^2_{\widetilde\L_\Gamma}+\|\widetilde\R_\Gamma\lambdae_\Gamma\|^2_{\widetilde\L_\Gamma}\\
&\leq  \|\w_\Gamma\|^2_{\widetilde\B_\Gamma}+Cc^2_3\|\w_\Gamma\|^2_{\widetilde{\M}_\Gamma}+\|\lambdae\|^2_{\M_\Gamma}, 
\end{align*}
when $H$ is sufficiently small, we can make $c_3$ small enough and obtain that 
\begin{equation}\label{wBestimate}
\|\w_\Gamma\|^2_{\widetilde\M_\Gamma}\leq { {C}}\big(\|\w_\Gamma\|^2_{\widetilde\B_\Gamma}+\|\lambdae_\Gamma\|^2_{\M_\Gamma}\big).
\end{equation}
Also, by~Lemma~\ref{MSrelation}, ~\eqref{wlambdaL3} and~\eqref{wBestimate}, we have
\begin{equation}\label{wBTM}
\begin{aligned}
\|\w_{\Gamma}\|^2_{\widetilde\B_\Gamma}&=\|\w_{\Gamma}\|^2_{\widetilde\S_\Gamma}=\w^T_{\Gamma}\widetilde\S_\Gamma\w^T_{\Gamma}
=\w^T_{\Gamma}\widetilde\S_\Gamma\widetilde\S^{-1}_\Gamma\widetilde\R_{\D,\Gamma}\widehat{\S}_\Gamma\lambdae_\Gamma
=\l\lambdae_\Gamma, \widetilde\R^T_{\D,\Gamma}\w_{\Gamma}\r_{\widehat{\S}_\Gamma}\\
&=\l\lambdae_\Gamma, {\bf T}\lambdae_{\Gamma}\r_{\widehat{\S}_\Gamma}=\l\lambdae_\Gamma, {\bf T}\lambdae_{\Gamma}\r_{{\B}_\Gamma}
+\l\lambdae_\Gamma, {\bf T}\lambdae_{\Gamma}-\lambdae_{\Gamma}\r_{{\Z}_\Gamma}\\
&\leq \|\lambdae_\Gamma\|_{\B_\Gamma}\|{\bf T}\lambdae_\Gamma\|_{\B_\Gamma}+ Cc_2\|\lambdae\|_{\M_\Gamma} (\|\w_\Gamma\|_{\widetilde\M_\Gamma}+\|\lambdae_\Gamma\|_{\M_\Gamma})\\
&\leq \|\lambdae_\Gamma\|_{\M_\Gamma}\|{\bf T}\lambdae_\Gamma\|_{\M_\Gamma}+{Cc_2}\|\lambdae\|_{\M_\Gamma}\|\w_\Gamma\|_{\widetilde\B_\Gamma}+ {Cc_2}\|\lambdae\|^2_{\M_\Gamma}.
\end{aligned}
\end{equation}
By Young's inequality, we obtain that
\begin{align}\label{wTlambdafinal}
\|\w_{\Gamma}\|^2_{\widetilde\B_\Gamma}\leq C\big(\|\lambdae_\Gamma\|_{\M_\Gamma}\|{\bf T}\lambdae_\Gamma\|_{\M_\Gamma}+(Cc^2_2+Cc_2)\|\lambdae_\Gamma\|^2_{\M_\Gamma}\big).
\end{align}
Since
\begin{align*}
\langle {\bf T}\lambdae_{\Gamma}, {\bf T}\lambdae_{\Gamma} \rangle_{\M_{\Gamma}} &= \langle \widetilde{\R}^T_{\D,\Gamma} \widetilde{\S}_{\Gamma}^{-1} \widetilde{\R}_{\D,\Gamma} \S_{\Gamma} \lambdae_{\Gamma}, \widetilde{\R}^T_{\D,\Gamma} \widetilde{\S}_{\Gamma}^{-1} \widetilde{\R}_{\D,\Gamma} \S_{\Gamma} \lambdae_{\Gamma} \rangle_{{\M}_{\Gamma}} \\
&= \langle \E_{\D}\w_{\Gamma}, \E_{\D}\w_{\Gamma} \rangle_{\widetilde{\M}_{\Gamma}}= \|\E_{\D}\w_{\Gamma}\|_{\widetilde{\M}_{\Gamma}}^2,
\end{align*}
we see that
\begin{align*}
\|{\bf T}\lambdae_\Gamma\|^2_{\M_\Gamma}&=\|\E_{\D}\w_{\Gamma}\|_{\widetilde{\M}_{\Gamma}}^2\leq {CC}^2_{ED,M}\|\w_{\Gamma}\|^2_{\widetilde\M_\Gamma}\leq {C{C}^2_{ED,M}}\big(\|\w_\Gamma\|^2_{\widetilde\B_\Gamma}+\|\lambdae_\Gamma\|^2_{\M_\Gamma}\big),\\
&\leq C{{C}^2_{ED,M}}\|\lambdae_\Gamma\|_{\M_\Gamma}\|{\bf T}\lambdae_\Gamma\|_{\M_\Gamma}
+C{{C}^2_{ED,M}(c^2_2+c_2+1)}\|\lambdae_\Gamma\|^2_{\M_\Gamma},
\end{align*}
where ~\eqref{ED2} is used for the first inequality,~\eqref{wBestimate} for the second inequality, and ~\eqref{wTlambdafinal} for the last inequality.
Consequently, when $H$ is sufficiently small, $c_2$ can be small enough, therefore applying Young's inequality yields
\begin{equation}\label{TlambdaM}
\begin{aligned}
\|{\bf T}\lambdae_\Gamma\|^2_{\M_\Gamma}&\leq C\big(\big({{C}^2_{ED,M}}\big)^2+{{C}^2_{ED,M}(c^2_2+c_2+1)}\big)\|\lambdae_\Gamma\|^2_{\M_\Gamma}\\
&\leq C\big({{C}^2_{ED,M}}\big)^2\|\lambdae_\Gamma\|^2_{\M_\Gamma}=C^2_u\|\lambdae_\Gamma\|^2_{\M_\Gamma},
\end{aligned}
\end{equation}
where $C_u:=C{{C}^2_{ED,M}}.$
\end{proof}
\begin{lemma}\label{cl}
For any  $\lambdae_\Gamma\in\widehat{\Lambdae}_\Gamma$, when $H$ is sufficiently small, we have 
\begin{align*}
c_{l}\langle\lambdae_\Gamma,\lambdae_\Gamma\rangle_{\M_\Gamma}\leq  \l\lambdae_\Gamma,{\bf T}\lambdae_\Gamma\r_{\M_\Gamma},
\end{align*}
where  $
c_{l}=C\big(1-Cc_2C_{ED,M}-Cc_3C^2_{ED,M}\big)
$. $c_2$ and $c_3$ are defined in Lemma~\ref{Chigamma}. $C_{ED,M}$ is defined in Lemma~\ref{upperbbound}.
\end{lemma}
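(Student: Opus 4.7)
The plan is to mimic the scaffold of Lemma~\ref{upperbbound} but run it backwards, turning the identity $\langle\lambdae_\Gamma,\mathbf{T}\lambdae_\Gamma\rangle_{\widehat\S_\Gamma}=\|\w_\Gamma\|^2_{\widetilde\B_\Gamma}$ into a \emph{lower} bound for $\|\w_\Gamma\|^2_{\widetilde\B_\Gamma}$ by $\|\lambdae_\Gamma\|^2_{\B_\Gamma}$, then to assemble the $\B_\Gamma$- and $\L_\Gamma$-contributions of $\langle\lambdae_\Gamma,\mathbf{T}\lambdae_\Gamma\rangle_{\M_\Gamma}$ separately. First I would set $\w_\Gamma = \widetilde\S_\Gamma^{-1}\widetilde\R_{\D,\Gamma}\widehat\S_\Gamma\lambdae_\Gamma$ and $\z_\Gamma = \widetilde\L_\Gamma^{-1}\widetilde\R_{\D,\Gamma}\L_\Gamma\lambdae_\Gamma$ as in Lemma~\ref{Chigamma}, so that $\mathbf{T}\lambdae_\Gamma = \widetilde\R_{\D,\Gamma}^T\w_\Gamma$. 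Using $\widetilde\R_\Gamma^T\widetilde\R_{\D,\Gamma} = I$ from~\eqref{scalingD}, I would verify the three core identities
\begin{equation*}
\langle\lambdae_\Gamma,\mathbf{T}\lambdae_\Gamma\rangle_{\widehat\S_\Gamma} = \|\w_\Gamma\|^2_{\widetilde\S_\Gamma} = \|\w_\Gamma\|^2_{\widetilde\B_\Gamma}, \quad \langle\w_\Gamma,\widetilde\R_\Gamma\lambdae_\Gamma\rangle_{\widetilde\S_\Gamma} = \|\lambdae_\Gamma\|^2_{\widehat\S_\Gamma}, \quad \langle\lambdae_\Gamma,\mathbf{T}\lambdae_\Gamma\rangle_{\L_\Gamma} = \langle\w_\Gamma,\z_\Gamma\rangle_{\widetilde\L_\Gamma}.
\end{equation*}
Splitting $\M_\Gamma = \widehat\S_\Gamma - \Z_\Gamma + \L_\Gamma$ and using $\langle\lambdae_\Gamma,\lambdae_\Gamma\rangle_{\Z_\Gamma}=0$ from Lemma~\ref{MSrelation}, this yields the working decomposition
\begin{equation*}
\langle\lambdae_\Gamma,\mathbf{T}\lambdae_\Gamma\rangle_{\M_\Gamma} = \|\w_\Gamma\|^2_{\widetilde\B_\Gamma} \;-\; \langle\lambdae_\Gamma,\mathbf{T}\lambdae_\Gamma-\lambdae_\Gamma\rangle_{\Z_\Gamma} \;+\; \langle\w_\Gamma,\z_\Gamma\rangle_{\widetilde\L_\Gamma}.
\end{equation*}

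Next I would produce matching lower bounds for each of these three pieces. For $\langle\w_\Gamma,\z_\Gamma\rangle_{\widetilde\L_\Gamma}$, I would decompose $\z_\Gamma = \widetilde\R_\Gamma\lambdae_\Gamma + (\z_\Gamma-\widetilde\R_\Gamma\lambdae_\Gamma)$ and $\w_\Gamma = \widetilde\R_\Gamma\lambdae_\Gamma + (\w_\Gamma-\widetilde\R_\Gamma\lambdae_\Gamma)$, obtaining $\langle\w_\Gamma,\z_\Gamma\rangle_{\widetilde\L_\Gamma} = \|\lambdae_\Gamma\|^2_{\L_\Gamma} + \langle\w_\Gamma-\widetilde\R_\Gamma\lambdae_\Gamma,\widetilde\R_\Gamma\lambdae_\Gamma\rangle_{\widetilde\L_\Gamma} + \langle\w_\Gamma,\z_\Gamma-\widetilde\R_\Gamma\lambdae_\Gamma\rangle_{\widetilde\L_\Gamma}$; the latter two terms are controlled by~\eqref{wlambdaL2} and~\eqref{wlambdaL1}, giving a $C(c_1+c_3)\|\lambdae_\Gamma\|_{\L_\Gamma}\|\w_\Gamma\|_{\widetilde\M_\Gamma}$ error. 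For the $\Z_\Gamma$ term, I would apply~\eqref{wlambdaL4} directly, producing a $Cc_2\|\lambdae_\Gamma\|_{\M_\Gamma}(\|\w_\Gamma\|_{\widetilde\M_\Gamma}+\|\lambdae_\Gamma\|_{\M_\Gamma})$ contribution.

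The crucial step and main obstacle is lower bounding $\|\w_\Gamma\|^2_{\widetilde\B_\Gamma}$ by $\|\lambdae_\Gamma\|^2_{\B_\Gamma}$. Because $\widetilde\S_\Gamma$ is nonsymmetric, Cauchy-Schwarz cannot be applied to the identity $\langle\w_\Gamma,\widetilde\R_\Gamma\lambdae_\Gamma\rangle_{\widetilde\S_\Gamma}=\|\lambdae_\Gamma\|^2_{\widehat\S_\Gamma}$ in one shot. Instead I would split $\widetilde\S_\Gamma = \widetilde\B_\Gamma + \widetilde\Z_\Gamma$, bound the symmetric part by Cauchy-Schwarz on $\widetilde\B_\Gamma$ (yielding $\|\w_\Gamma\|_{\widetilde\B_\Gamma}\|\lambdae_\Gamma\|_{\widehat\S_\Gamma}$), and rewrite the skew-symmetric part as $\langle\w_\Gamma,\widetilde\R_\Gamma\lambdae_\Gamma-\w_\Gamma\rangle_{\widetilde\Z_\Gamma}$ using $\langle\w_\Gamma,\w_\Gamma\rangle_{\widetilde\Z_\Gamma}=0$, so that estimate~\eqref{wlambdaL3} applies. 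Young's inequality then absorbs the $\|\lambdae_\Gamma\|^2_{\widehat\S_\Gamma}$ factor from the right, producing
\begin{equation*}
\|\w_\Gamma\|^2_{\widetilde\B_\Gamma} \;\geq\; \|\lambdae_\Gamma\|^2_{\B_\Gamma} \;-\; Cc_4\|\w_\Gamma\|_{\widetilde\M_\Gamma}\bigl(\|\w_\Gamma\|_{\widetilde\M_\Gamma} + \|\lambdae_\Gamma\|_{\M_\Gamma}\bigr),
\end{equation*}
where $c_4=\gamma_1\alpha_1$ is, for small $H$, controlled by $c_3$ up to an absolute constant.

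Finally, inserting these three estimates into the decomposition gives $\langle\lambdae_\Gamma,\mathbf{T}\lambdae_\Gamma\rangle_{\M_\Gamma} \geq \|\lambdae_\Gamma\|^2_{\M_\Gamma}$ minus several cross terms each of the form $\|\w_\Gamma\|_{\widetilde\M_\Gamma}\|\lambdae_\Gamma\|_{\M_\Gamma}$ or $\|\w_\Gamma\|^2_{\widetilde\M_\Gamma}$. To close the argument I would recycle the auxiliary bound $\|\w_\Gamma\|_{\widetilde\M_\Gamma}\leq CC_{ED,M}\|\lambdae_\Gamma\|_{\M_\Gamma}$ that follows from combining~\eqref{wBestimate},~\eqref{wTlambdafinal} and the upper bound~\eqref{TlambdaM} established in Lemma~\ref{upperbbound}. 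After this substitution, the dominant error terms become $Cc_2C_{ED,M}\|\lambdae_\Gamma\|^2_{\M_\Gamma}$ and $Cc_4 C^2_{ED,M}\|\lambdae_\Gamma\|^2_{\M_\Gamma}\lesssim Cc_3 C^2_{ED,M}\|\lambdae_\Gamma\|^2_{\M_\Gamma}$, with the remaining $c_1,c_4$ contributions absorbed. Choosing $H$ small enough that $Cc_2C_{ED,M}+Cc_3C^2_{ED,M}<1$ makes $c_l=C(1-Cc_2C_{ED,M}-Cc_3C^2_{ED,M})$ strictly positive and yields the claim.
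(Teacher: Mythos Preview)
Your proposal is essentially correct and uses the same three identities and the same four estimates~\eqref{wlambdaL1}--\eqref{wlambdaL3} as the paper. The organization differs: the paper first shows $\|\lambdae_\Gamma\|^2_{\M_\Gamma}\le \|\w_\Gamma\|^2_{\widetilde\M_\Gamma}+\text{(small)}$ via~\eqref{lambdawgamma} and then, separately, $\|\w_\Gamma\|^2_{\widetilde\M_\Gamma}\le \langle\lambdae_\Gamma,\mathbf{T}\lambdae_\Gamma\rangle_{\M_\Gamma}+\text{(small)}$ via~\eqref{wTCEDMc2}, substituting the second into the first; you instead decompose $\langle\lambdae_\Gamma,\mathbf{T}\lambdae_\Gamma\rangle_{\M_\Gamma}$ directly and lower-bound each piece. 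Both routes arrive at the same place with the same ingredients.

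There is, however, one bookkeeping slip at the end. Your ``crucial step'' produces the error $Cc_4\|\w_\Gamma\|^2_{\widetilde\M_\Gamma}$, which after inserting $\|\w_\Gamma\|_{\widetilde\M_\Gamma}\le CC_{ED,M}\|\lambdae_\Gamma\|_{\M_\Gamma}$ becomes $Cc_4C^2_{ED,M}\|\lambdae_\Gamma\|^2_{\M_\Gamma}$. You then assert $c_4\lesssim c_3$, but from the definitions $c_3=c_0H(1+\beta^{-1/2})$ and $c_4=\gamma_1\alpha_1$ one gets $c_4/c_3\sim c_0^2$, which is bounded only when $c_0=h\beta^{-1/2}+1=O(1)$; this is the regime discussed in the Remark after the lemma but not an assumption of the lemma itself. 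The paper avoids this by using $\|\w_\Gamma\|^2_{\widetilde\M_\Gamma}$ as a pivot: the $Cc_4\|\w_\Gamma\|^2_{\widetilde\M_\Gamma}$ contribution from~\eqref{wlambdaL3} is multiplied through the bound~\eqref{wTCEDMc2}, so it becomes $(C+Cc_4)\langle\lambdae_\Gamma,\mathbf{T}\lambdae_\Gamma\rangle_{\M_\Gamma}$ plus products of two small constants, and the prefactor $(C+Cc_4)$ is harmless for small $H$. To match the stated $c_l$ exactly you should either adopt this pivot step or replace your claim $c_4\lesssim c_3$ with the weaker (but always valid) $c_4\le c_2$ and accept a $c_2C_{ED,M}^2$ term in $c_l$.
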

\begin{proof}
We have
\begin{align*}
\langle\lambdae_\Gamma,\lambdae_\Gamma\rangle_{{\M}_\Gamma}&=\langle\lambdae_\Gamma,\lambdae_\Gamma\rangle_{{\B}_\Gamma}+\langle\lambdae_\Gamma,\lambdae_\Gamma\rangle_{{\L}_\Gamma}
=\langle\lambdae_\Gamma,\lambdae_\Gamma\rangle_{{\wS}_\Gamma}+\langle\lambdae_\Gamma,\lambdae_\Gamma\rangle_{{\L}_\Gamma}\\
&=\langle\w_\Gamma,\widetilde{\R}_\Gamma\lambdae_\Gamma\rangle_{\widetilde{\S}_\Gamma}
+\langle\lambdae_\Gamma,\lambdae_\Gamma\rangle_{{\L}_\Gamma}\\
&=\langle \w_\Gamma, \widetilde{\R}_\Gamma\lambdae_\Gamma\rangle_{\widetilde\M_\Gamma}-\langle \w_\Gamma, \widetilde{\R}_\Gamma\lambdae_\Gamma\rangle_{\widetilde{\bf L}_\Gamma}+\langle\w_\Gamma,\widetilde{\R}_\Gamma{\lambdae}_\Gamma\rangle_{\widetilde{\Z}_\Gamma}
+\langle\widetilde{\R}_\Gamma\lambdae_\Gamma,\widetilde{\R}_\Gamma\lambdae_\Gamma\rangle_{\widetilde{\bf L}_\Gamma}\\
&=\langle \w_\Gamma, \widetilde{\R}_\Gamma\lambdae_\Gamma\rangle_{\widetilde\M_\Gamma}-\langle \w_\Gamma-\widetilde{\R}_\Gamma\lambdae_\Gamma,\widetilde{\R}_\Gamma\lambdae_\Gamma\rangle_{\widetilde{\L}_\Gamma}+\langle\w_\Gamma,\widetilde{\R}_\Gamma{\lambdae}_\Gamma-\w_\Gamma\rangle_{\widetilde{\Z}_\Gamma}
\\
&\leq \frac12\|\w_\Gamma\|^2_{\widetilde{\M}_\Gamma}+\frac12\|\lambdae_\Gamma\|^2_{\M_\Gamma}-\langle \w_\Gamma-\widetilde{\R}_\Gamma\lambdae_\Gamma,\widetilde{\R}_\Gamma\lambdae_\Gamma\rangle_{{\widetilde{\bf L}_\Gamma}}+\langle\w_\Gamma,\widetilde{\R}_\Gamma{\lambdae}_\Gamma-\w_\Gamma\rangle_{\widetilde{\Z}_\Gamma},
\end{align*}
where the first and the second equalities follow from Lemma~\ref{MSrelation}. The third equality follows from~\eqref{scalingD} with the fact that
\[
\langle\w_\Gamma,\widetilde{\R}_\Gamma\lambdae_\Gamma\rangle_{\widetilde{\S}_\Gamma}=\lambdae^T_\Gamma\widetilde{\R}_\Gamma^T\widetilde{\S}_\Gamma\w_\Gamma
=\lambdae^T_\Gamma\widetilde{\R}_\Gamma^T\widetilde{\S}_\Gamma\widetilde{\S}^{-1}_\Gamma\widetilde{\R}_{\D,\Gamma}\widehat\S_\Gamma\lambdae_\Gamma
=\langle\lambdae_\Gamma,\lambdae_\Gamma\rangle_{\widehat{\S}_\Gamma}.\]
The last inequality follows from
$$\langle \w_\Gamma, \widetilde{\R}_\Gamma\lambdae_\Gamma\rangle_{\widetilde\M_\Gamma}\leq \|\w_\Gamma\|_{\widetilde\M_\Gamma}\| \widetilde{\R}_\Gamma\lambdae_\Gamma\|_{\widetilde\M_\Gamma}\leq \frac12 (\|\w_\Gamma\|_{\widetilde\M_\Gamma}^2+\|\lambdae_\Gamma\|^2_{\M_\Gamma}).$$
Therefore, we obtain that
\begin{align}\label{lambdawgamma}
\l\lambdae_\Gamma,\lambdae_\Gamma\r_{{\M}_\Gamma}
&\leq \|\w_\Gamma\|^2_{\widetilde{\bf M}_\Gamma}
+C|\langle \w_\Gamma-\widetilde{\R}_\Gamma\lambdae_\Gamma,\widetilde{\R}_\Gamma\lambdae_\Gamma\rangle_{\widetilde{\L}_\Gamma}|+C|\langle\w_\Gamma,\widetilde{\R}_\Gamma{\lambdae}_\Gamma-\w_\Gamma\rangle_{\widetilde{\Z}_\Gamma}
|.
\end{align}
In order to estimate $\|\lambdae_\Gamma\|_{\M_\Gamma}$, we need to estimate  $\|\w_\Gamma\|_{\widetilde\M_\Gamma}$. Define ${\z}_\Gamma=\widetilde{\L}^{-1}_\Gamma\widetilde{\bf R}_{\D,\Gamma}{\L}_\Gamma\lambdae_\Gamma$, we have
\begin{equation}\label{wc123}
\begin{aligned}
&\l\w_\Gamma,\w_\Gamma\r_{\widetilde{\M}_\Gamma}\\
&=\l\w_\Gamma,\w_\Gamma\r_{\widetilde{\B}_\Gamma}+\l\w_\Gamma,\w_\Gamma\r_{\widetilde{\L}_\Gamma}
=\l\lambdae_\Gamma,\Tb\lambdae_\Gamma\r_{\widehat\S_\Gamma}+\l\w_\Gamma,\w_\Gamma\r_{\widetilde{\L}_\Gamma}\\
&=\l\lambdae_\Gamma,\Tb\lambdae_\Gamma\r_{\M_\Gamma}-\l\lambdae_\Gamma,\Tb\lambdae_\Gamma\r_{{\L}_\Gamma}
+\l\lambdae_\Gamma,\Tb\lambdae_\Gamma\r_{\Z_\Gamma}+\l\w_\Gamma,\w_\Gamma\r_{\widetilde{\L}_\Gamma}\\
&=\l\lambdae_\Gamma,\Tb\lambdae_\Gamma\r_{\M_\Gamma}+\l\lambdae_\Gamma,\Tb\lambdae_\Gamma\r_{\Z_\Gamma}-\l\w_\Gamma,\z_\Gamma-\w_\Gamma\r_{\widetilde{\L}_\Gamma}\\
&=\l\lambdae_\Gamma,\Tb\lambdae_\Gamma\r_{\M_\Gamma}+\l\lambdae_\Gamma,\Tb\lambdae_\Gamma-\lambdae_\Gamma\r_{\Z_\Gamma}-\l\w_\Gamma,\z_\Gamma-\w_\Gamma\r_{\widetilde{\L}_\Gamma}\\
&=\l\lambdae_\Gamma,\Tb\lambdae_\Gamma\r_{\M_\Gamma}+\l\lambdae_\Gamma,\Tb\lambdae_\Gamma-\lambdae_\Gamma\r_{\Z_\Gamma}-\l\w_\Gamma,\z_\Gamma-
\widetilde{\R}_\Gamma \lambdae_\Gamma\r_{\widetilde\L_\Gamma}+\l\w_\Gamma,\w_\Gamma-
\widetilde{\R}_\Gamma \lambdae_\Gamma\r_{\widetilde\L_\Gamma},
\end{aligned}
\end{equation}
where the first equality follows from Lemma~\ref{MSrelation}, the second equality follows from $\|\w_\Gamma\|^2_{\widetilde\B_\Gamma}=\l\lambdae_\Gamma,{\bf T}\lambdae_\Gamma\r_{\widehat{\S}_\Gamma}$ which is proved in~\eqref{wBTM},
and the third equality follows from the symmetry of the inner product $\langle \cdot,\cdot \rangle_{\mathbf{L}_\Gamma}$ with the fact that
\begin{align*}
\l\lambdae_\Gamma,{\bf T}\lambdae_\Gamma\r_{\bf L_\Gamma}
=\lambdae^T_\Gamma{\L}_\Gamma \widetilde{\bf R}^T_{\D,\Gamma}\widetilde{\S}^{-1}_\Gamma\widetilde{\bf R}_{\D,\Gamma}{\widehat\S}_\Gamma\lambdae_\Gamma
=\lambdae^T_\Gamma{\L}_\Gamma \widetilde{\bf R}^T_{\D,\Gamma}\widetilde{\L}_\Gamma^{-1}\widetilde{\L}_\Gamma\widetilde{\S}^{-1}_\Gamma\widetilde{\bf R}_{\D,\Gamma}{\widehat\S}_\Gamma\lambdae_\Gamma=\l{\bf w}_\Gamma,{\bf z}_\Gamma\r_{\widetilde{\L}_\Gamma}.
\end{align*}
Also, when $H$ is sufficiently small, by~\eqref{wBestimate},~\eqref{wTlambdafinal},~\eqref{TlambdaM}, we obtain 
\begin{equation}\label{TMCEDM}
\|{\w}_\Gamma\|_{\widetilde\M_\Gamma}\leq CC_{ED,M}\|\lambdae\|_{\M_\Gamma},
\end{equation}
where $C_{ED,M}$ is defined in Lemma~\ref{upperbbound}.
Then, we have
\begin{equation}\label{wTCEDMc2}
\begin{aligned}
&\l\w_\Gamma,\w_\Gamma\r_{\widetilde{\M}_\Gamma}\\
&\leq \l\lambdae_\Gamma,\Tb\lambdae_\Gamma\r_{\M_\Gamma}+\l\lambdae_\Gamma,\Tb\lambdae_\Gamma-\lambdae_\Gamma\r_{\Z_\Gamma}+|\l\w_\Gamma,\z_\Gamma-
\widetilde{\R}_\Gamma \lambdae_\Gamma\r_{\widetilde\L_\Gamma}|\\
&\quad+|\l\w_\Gamma,\w_\Gamma-
\widetilde{\R}_\Gamma \lambdae_\Gamma\r_{\widetilde\L_\Gamma}|\\
&\leq  \l\lambdae_\Gamma,\Tb\lambdae_\Gamma\r_{\M_\Gamma}+Cc_2\|\lambdae_\Gamma\|_{\M_\Gamma}(\|\w_\Gamma\|_{\widetilde\M_\Gamma}+\|\lambdae_\Gamma\|_{\M_\Gamma})+Cc_1\|\lambdae_\Gamma\|_{\M_\Gamma}\|{\bf w}_\Gamma\|_{\widetilde{\M}_\Gamma}\\
&\quad+Cc_3\|\w_\Gamma\|^2_{\widetilde{\M}_\Gamma}\\
&\leq\l\lambdae_\Gamma,\Tb\lambdae_\Gamma\r_{\M_\Gamma}+Cc_2C_{ED,M}\|\lambdae_\Gamma\|^2_{\M_\Gamma}+Cc_3C^2_{ED,M}\|\lambdae_\Gamma\|^2_{\M_\Gamma},
\end{aligned}
\end{equation}
where the first inequality follows from~\eqref{wc123}, the second inequality follows from~\eqref{wlambdaL4}, ~\eqref{wlambdaL1} and ~\eqref{wlambdaL2}, the  last inequality follows from~\eqref{TMCEDM}, and $c_1\leq c_2$.
Therefore,
\begin{equation*}
\begin{aligned}
&\l\lambdae_\Gamma,\lambdae_\Gamma\r_{\M_\Gamma}\\
&\leq \|\w_\Gamma\|^2_{\widetilde{\M}_\Gamma}+C\|\w_\Gamma-
\widetilde{\R}_\Gamma \lambdae_\Gamma\|_{\widetilde\L_\Gamma}\|\lambdae_\Gamma\|_{\L_\Gamma}+C|\langle\w_\Gamma,\widetilde{\R}_\Gamma{\lambdae}_\Gamma-\w_\Gamma\rangle_{\widetilde{\Z}_\Gamma}|\\
&\leq \|\w_\Gamma\|^2_{\widetilde{\M}_\Gamma}+
Cc_3\|\w_\Gamma\|_{\widetilde{\M}_\Gamma}\|\lambdae_\Gamma\|_{\M_\Gamma}+Cc_4\|\w\|_{\widetilde{\M}_\Gamma}(\|\w\|_{\widetilde{\M}_\Gamma}+\|\lambdae_{\Gamma}\|_{\M_\Gamma})\\
&\leq (C+Cc_4)\|\w_\Gamma\|^2_{\widetilde{\M}_\Gamma}+C(c^2_3+c^2_4)\|\lambdae_\Gamma\|^2_{\M_\Gamma}\\
&\leq {(C+Cc_4)}\bigg(
\l\lambdae_\Gamma,\Tb\lambdae_\Gamma\r_{\M_\Gamma}
+Cc_2C_{ED,M}\|\lambdae_\Gamma\|^2_{\M_\Gamma}+Cc_3C^2_{ED,M}\|\lambdae_\Gamma\|^2_{{\M}_\Gamma}\bigg)+C(c^2_3+c^2_4)\|\lambdae_\Gamma\|^2_{\M_\Gamma}\\
&\leq (C+Cc_4)
\l\lambdae_\Gamma,\Tb\lambdae_\Gamma\r_{\M_\Gamma}
+{(C+Cc_4)\big(c_2C_{ED,M}+c_3C^2_{ED,M}\big)}\|\lambdae_\Gamma\|^2_{\M_\Gamma},
\end{aligned}
\end{equation*}
where the first inequality follows from~\eqref{lambdawgamma}, the second inequality follows from~\eqref{wlambdaL2} and~\eqref{wlambdaL3}, the third inequality follows from Young's inequality, the second to last inequality follows from~\eqref{wTCEDMc2}, and the last inequality follows from
the fact that $c_3\leq C_{ED,M}$, $c_4\leq c_2$, and $C_{ED,M}\geq 1$.

Therefore, when $H$ is sufficiently small, we have $c_2, c_3, c_4$ small enough such that 
\[
\l\lambdae_\Gamma,\lambdae_\Gamma\r_{\M_\Gamma}\leq C\l\lambdae_\Gamma,\Tb\lambdae_\Gamma\r_{\M_\Gamma}+C\big(c_2C_{ED,M}+c_3C^2_{ED,M}\big)\|\lambdae_\Gamma\|^2_{\M_\Gamma},
\]
and 
\begin{align*}
c_{l}\l\lambdae_\Gamma,\lambdae_\Gamma\r_{\M_\Gamma}\leq \l\lambdae_\Gamma,{\bf T}\lambdae_\Gamma\r_{\M_\Gamma},
\end{align*}
where $c_{l}:=C\big(1-Cc_2C_{ED,M}-Cc_3C^2_{ED,M}\big)$.
\end{proof}

\begin{remark}
Based on the estimates in Lemma~\ref{upperbbound} and Lemma~\ref{cl}, we obtain the following:
when $\beta=O(1)$, the bounds $C_u, c_l$ can be simplified as 
\begin{align*}
C_u&=C{\bigg(1+\log \frac Hh\bigg)^2},\quad c_l=C\bigg(1-CH\bigg(1+\log \frac Hh\bigg)^2\bigg),
\end{align*}
and these estimates are consistent with the results in~\cite{TZAD2021} for the advection–diffusion problem.
 When $\beta$ is not too small compared to $h$, namely $\beta=O(h^{2-\delta})$ for any fixed $\delta\in (0,2)$,
 the bounds $C_u, c_l$ can be simplified as 
\begin{align*}
C_u&={C\bigg(1+\log \frac Hh \bigg)^2},\quad c_l=C\bigg(1-CHh^{-1+\frac\delta 2}\bigg(1+\log \frac Hh\bigg)^2\bigg).
\end{align*}
In this case, our results are comparable to those established in~\cite[Theorem 4.2]{Schoberl:2011:RobustMG}, which are based on standard finite element discretizations combined with multigrid methods. The $\mathcal{O}(h^{2-\delta})$ threshold can be reflected in Lemma~\ref{wdnormS}, which utilizes specific properties of the HDG discretizations and plays a crucial role in establishing the lower bound. 
In both cases, the convergence rates are independent of $\beta$. By selecting appropriate values of $H$ and $ h $, $c_l$ can be ensured to be positive.  
The convergence rate  of the proposed BDDC algorithms is independent of the number of subdomains and depends slightly on subdomain problem size $ H/h$.  
When $ \beta$ is very small, $H$ is required to be tiny to maintain the positivity of $ c_l$, which might not be practical.  
Nevertheless, numerical experiments still exhibit good convergence behavior in such case.
\end{remark}

\section{Proof of lemmas}\label{sec:lemmaproofs}
In this section, we provide the proofs of Lemma~\ref{UL2K},  \ref{UUADL2difference2}, \ref{abilinearlessthan1}, \ref{Zhatestimatetilde}, \ref{wdnormS} and \ref{Chigamma}, along with the supporting lemmas required for these proofs.


\begin{lemma}\label{bdulambda}
Given $\lambdae=(\lambda,\mu)^T,{\s}=(s,t)^T\in \widetilde{\Lambdae}$, under Assumption \ref{assumpconstrain},
 we have the following estimates
\begin{align*}
|\langle{\bm\zeta}\cdot{\bf n}\lambda,{s}\rangle_{\partial\T_h}|\leq CH\beta^{-\frac12}\|{\bf E}_{\C}(\lambdae)\|_{\Aa}\|{\bf E}_{\C}({\s})\|_{\Aa},\quad |\langle{\bm\zeta}\cdot{\bf n}t,{\mu}\rangle_{\partial\T_h}|\leq CH\beta^{-\frac12}\|{\bf E}_{\C}(\lambdae)\|_{\Aa}\|{\bf E}_{\C}({\s})\|_{\Aa}.
\end{align*}
\end{lemma}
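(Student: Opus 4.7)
The plan is to leverage the partial-assembly structure together with the primal constraints of Assumption~\ref{assumpconstrain} to reduce the boundary sum to a quadratic remainder controlled by triple-bar seminorms, and then convert to the $\|\cdot\|_{\Aa}$-norm via Lemma~\ref{lambdaetriple}. Only the first inequality is discussed; the second follows from an identical argument applied to the second component.

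\textbf{Step 1 (intra-subdomain cancellation).} Since $\lambda,s\in\widetilde\Lambdae$ are single-valued on faces interior to each subdomain while the two adjacent elements contribute opposite normals, all intra-subdomain contributions cancel, leaving
\[
\langle\bm\zeta\cdot{\bf n}\lambda,s\rangle_{\partial\T_h}=\sum_{i=1}^N\int_{\partial\Omega_i}\bm\zeta\cdot{\bf n}^{(i)}\lambda^{(i)}s^{(i)}\,ds=\sum_{\mathcal{E}_{ij}}\int_{\mathcal{E}_{ij}}\bm\zeta\cdot{\bf n}^{(i)}\big[\lambda^{(i)}s^{(i)}-\lambda^{(j)}s^{(j)}\big]\,ds,
\]
after grouping pairs of neighboring subdomains on each interface edge $\mathcal{E}_{ij}$ using ${\bf n}^{(j)}=-{\bf n}^{(i)}$.

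\textbf{Step 2 (coarse cancellation via primal constraints).} On each $\mathcal{E}_{ij}$ let $\bar\lambda_{ij}$ and $\bar s_{ij}$ denote the common edge averages, which coincide from the two sides by constraint (i) of Assumption~\ref{assumpconstrain}. Writing $\lambda^{(i)}=\bar\lambda_{ij}+\tilde\lambda^{(i)}$, $s^{(i)}=\bar s_{ij}+\tilde s^{(i)}$, and similarly on $\Omega_j$, the expansion of $\lambda^{(i)}s^{(i)}-\lambda^{(j)}s^{(j)}$ produces four pieces: the $\bar\lambda\bar s$ piece drops trivially since the constants agree, while the two mixed pieces $\bar s_{ij}(\tilde\lambda^{(i)}-\tilde\lambda^{(j)})$ and $\bar\lambda_{ij}(\tilde s^{(i)}-\tilde s^{(j)})$ integrate to zero against $\bm\zeta\cdot{\bf n}^{(i)}$ because constraint (ii) forces $\int_{\mathcal{E}_{ij}}\bm\zeta\cdot{\bf n}^{(i)}(\lambda^{(i)}-\lambda^{(j)})\,ds=0$. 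Only the quadratic remainder survives, and reassembling by subdomain gives
\[
\langle\bm\zeta\cdot{\bf n}\lambda,s\rangle_{\partial\T_h}=\sum_{i=1}^N\int_{\partial\Omega_i}\bm\zeta\cdot{\bf n}^{(i)}\big(\lambda^{(i)}-\bar\lambda^{(i)}\big)\big(s^{(i)}-\bar s^{(i)}\big)\,ds,
\]
where $\bar\lambda^{(i)},\bar s^{(i)}$ are edge-wise constant on $\partial\Omega_i$.

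\textbf{Step 3 (trace, Poincar\'e, $\Aa$-conversion).} Cauchy--Schwarz on each $\partial\Omega_i$, together with the elementary trace scaling $\sqrt{H}\,\|\mu\|_{L^2(\partial\Omega_i)}\leq C\|\mu\|_{h,\Omega_i}$ and Lemma~\ref{citeadlemma7.8} applied to $\lambda^{(i)}-\bar\lambda^{(i)}$ (which is edge-average-zero on each edge of $\partial\Omega_i$ by construction), yields
$|\langle\bm\zeta\cdot{\bf n}\lambda,s\rangle_{\partial\T_h}|\leq CH\,\interleave\lambdae\interleave\interleave{\bf s}\interleave$. Lemma~\ref{lambdaetriple} then converts each triple-bar factor via $\interleave\lambdae\interleave\leq C\beta^{-1/4}\|{\bf E}_{\C}(\lambdae)\|_{\Aa}$, producing the claimed $CH\beta^{-1/2}$.

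The main obstacle is the bookkeeping in Step~2 --- confirming that the normal sign flip ${\bf n}^{(j)}=-{\bf n}^{(i)}$ cooperates correctly with the primal equalities so that precisely three of the four expansion pieces drop (noting that only constraints (i) and (ii) are needed, not the first-moment constraint (iii)) --- together with the auxiliary estimate $\interleave\lambda^{(i)}-\bar\lambda^{(i)}\interleave_{\Omega_i}\leq C\interleave\lambda^{(i)}\interleave_{\Omega_i}$ required before invoking Lemma~\ref{citeadlemma7.8}. The latter reduces to verifying that the edge-wise constant shift $\bar\lambda^{(i)}$ on $\partial\Omega_i$ can be absorbed into the element means $m_K$ up to boundary terms that are themselves bounded by $\interleave\lambda^{(i)}\interleave_{\Omega_i}$ via a Cauchy--Schwarz argument on each interface edge.
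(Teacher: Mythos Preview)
Your proposal is correct and matches the paper's argument through Steps~1 and~2: the intra-subdomain cancellation and the use of the primal constraints in Assumption~\ref{assumpconstrain} to kill the constant and mixed pieces, leaving only the mean-free quadratic remainder, are exactly what the paper does. The paper then converts to $\|\cdot\|_{\Aa}$ via \eqref{Anormtriplrbar}, which is your Lemma~\ref{lambdaetriple}.

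The only divergence is in Step~3. The paper does not route through the subdomain $h$-norm and Lemma~\ref{citeadlemma7.8}; instead it directly invokes \cite[Equation~(7.49)]{TZAD2021} to obtain, on each interface edge $\mathcal{F}\subset\mathcal{E}_{ij}$, the edge Poincar\'e bound
\[
\|\lambda^{(i)}-\bar\lambda_{\mathcal{E}_{ij}}\|_{\mathcal{F}}\,\|s^{(i)}-\bar s_{\mathcal{E}_{ij}}\|_{\mathcal{F}}\leq CH\,\interleave\lambda^{(i)}\interleave_{\Omega_i}\interleave s^{(i)}\interleave_{\Omega_i},
\]
and then sums. This sidesteps precisely the obstacle you flag: to apply Lemma~\ref{citeadlemma7.8} you must extend the edge-wise constant $\bar\lambda^{(i)}$ to all interior faces of $\Omega_i$, and then the auxiliary estimate $\interleave\lambda^{(i)}-\bar\lambda^{(i)}\interleave_{\Omega_i}\leq C\interleave\lambda^{(i)}\interleave_{\Omega_i}$ is not immediate, since the differing edge averages $\bar\lambda_{\mathcal{E}}$ contribute nontrivially to the triple-bar seminorm on boundary elements. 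The paper's direct citation of the boundary Poincar\'e estimate avoids this detour entirely.
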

\begin{proof}
Suppose that $\Omega_i$ and $\Omega_j$ are two adjacent subdomains sharing the interface edge $\mathcal{E}_{ij}$. Let $\lambdae^{(i)}=(\lambda^{(i)},\mu^{(i)})^T$ and ${\s}^{(i)}=(s^{(i)},t^{(i)})^T$ denote the restrictions of $\lambdae$ and $\s$ to subdomain $\Omega_i$, respectively. Define
$\overline{\lambda}_{\mathcal{E}_{ij}}=\frac1{|{\mathcal{E}_{ij}}|}\int_{{\mathcal{E}_{ij}}}\lambda^{(i)}ds$ and 
$\overline{s}_{\mathcal{E}_{ij}}=\frac1{|{\mathcal{E}_{ij}}|}\int_{{\mathcal{E}_{ij}}}s^{(i)}ds$, we obtain the following result 
\begin{equation}\label{lambdasHboundary}
\begin{aligned}
&|\langle{\bm\zeta}\cdot{\bf n}\lambda,{s}\rangle_{\partial\T_h}|=
\bigg|\sum\limits_{i=1}^N\sum\limits_{\mathcal{F}\subset \partial\Omega_i}\langle{\bm\zeta}\cdot{\bf n}\lambda^{(i)},{s}^{(i)}\rangle_{\mathcal{F}}\bigg|
=\bigg|\sum\limits_{i=1}^N\sum\limits_{\mathcal{F}\subset \partial\Omega_i\cap\partial\Omega_j}\langle{\bm\zeta}\cdot{\bf n}\lambda^{(i)},{s}^{(i)}\rangle_{\mathcal{F}}\bigg|\\
&=\bigg|\sum\limits_{i=1}^N\sum\limits_{\mathcal{F}\subset \partial\Omega_i\cap\partial\Omega_j}\langle{\bm\zeta}\cdot{\bf n}(\lambda^{(i)}-\overline{\lambda}_{\mathcal{E}_{ij}}),({s}^{(i)}-\overline{s}_{\mathcal{E}_{ij}})\rangle_{\mathcal{F}}\bigg|\\
&\le C\sum\limits_{i=1}^N\sum\limits_{\mathcal{F}\subset\partial\Omega_i\cap\partial\Omega_i}
\|\lambda^{(i)}-\overline{\lambda}_{\mathcal{E}_{ij}}\|_{\mathcal{F}}\|{s}^{(i)}-\overline{s}_{\mathcal{E}_{ij}}\|_{\mathcal{F}}
\le C\sum\limits_{i=1}^N H\interleave\lambda^{(i)}\interleave_{\T_h(\Omega_i)}\interleave{s}^{(i)}\interleave_{\T_h(\Omega_i)}\\
&\leq CH\beta^{-\frac12}\|{\bf E}_{\C}(\lambdae)\|_{\Aa}\|{\bf E}_{\C}({\s})\|_{\Aa},
\end{aligned}
\end{equation}
where the last equality follows from Assumption \ref{assumpconstrain}, the second  inequality follows from equation \cite[Equation (7.49)]{TZAD2021}, and the last inequality follows from  \eqref{Anormtriplrbar}. A Similar result holds for $(\mu,t)^T\in \widetilde\Lambdae$. Therefore, we obtain the results.
\end{proof}

\subsection{Proof of Lemma~\ref{UL2K}}
\begin{proof}
We will use a similar method as the proof in \cite[Lemma 7.2]{TZAD2021}. By equation \eqref{equation:AmatrixQU}, 
\cite[equation (2.16)]{TZAD2021} with $\epsilon=1$, 
we have 
\begin{equation}\label{systemA}
\begin{aligned}
&(Q_{A_1}\lambdae,{\bf r})_K-(U_{A_1}\lambdae,\nabla\cdot{\bf r})_K+\langle\lambda,{\bf r}\cdot{\bf n}\rangle_{\partial K}=0,\\
&\beta^{\frac12}(\nabla\cdot{Q}_{A_1}\lambdae,w)_{K}+\beta^{\frac12}\langle(\tau_1-\frac12\bz\cdot{\bf n})(U_{A_1}\lambdae-\lambda),w\rangle_{\partial K}\\
&\quad-\frac{\beta^{\frac12}}2(\bz\cdot\nabla w,U_{A_1}\lambdae)_{K}+\frac{\beta^{\frac12}}2(\bz\cdot\nabla U_{A_1}\lambdae,w)_{K}
+\frac{\beta^{\frac12}}2\langle \bz\cdot{\bf n}\lambda,w\rangle_{K}-(U_{A_2}\lambdae,w)_K=0,
\end{aligned}
\end{equation}
and
\begin{equation}\label{systemAad} 
\begin{aligned}
&(Q_{A^{ad}_1}\lambda,{\bf r})_K-(U_{A^{ad}_1}\lambda,\nabla\cdot{\bf r})_K+\langle\lambda,{\bf r}\cdot{\bf n}\rangle_{\partial K}=0,\\
&\beta^{\frac12}(\nabla\cdot{Q}_{A^{ad}_1}\lambda,w)_{K}+\beta^{\frac12}\langle(\tau_1-\frac12\bz\cdot{\bf n})(U_{A^{ad}_1}\lambda-\lambda),w\rangle_{\partial K}\\
&\quad-\frac{\beta^{\frac12}}2(\bz\cdot\nabla w,U_{A^{ad}_1}\lambda)_{K}+\frac{\beta^{\frac12}}2(\bz\cdot\nabla U_{A^{ad}_1}\lambda,w)_{K}
+\frac{\beta^{\frac12}}2\langle \bz\cdot{\bf n}\lambda,w\rangle_{K}=0.
\end{aligned}
\end{equation}
Subtracting \eqref{systemAad} from \eqref{systemA}, we obtain that 
\begin{equation}\label{QUAAad}
\begin{aligned}
&\beta^{\frac12}(Q_{A_1}\lambdae-Q_{A^{ad}_1}\lambda,{\bf r})_K-\beta^{\frac12}(U_{A_1}\lambdae-U_{A^{ad}_1}\lambda,\nabla\cdot{\bf r})_K=0\\
&\beta^{\frac12}(\nabla\cdot({Q}_{A_1}\lambdae-{Q}_{A^{ad}_1}\lambda),w)_{K}+\beta^{\frac12}\langle(\tau_1-\frac12\bz\cdot{\bf n})(U_{A_1}\lambdae-U_{A^{ad}_1}\lambda),w\rangle_{\partial K}\\
&\quad-\frac{\beta^{\frac12}}2(\bz\cdot\nabla w,U_{A_1}\lambdae-U_{A^{ad}_1}\lambda)_{K}
+\frac{\beta^{\frac12}}2(\bz\cdot\nabla (U_{A_1}\lambdae-U_{A^{ad}_1}\lambda),w)_{K}\\
&\quad-(U_{A_2}\lambdae,w)_K=0.
\end{aligned}
\end{equation}
Let \({\bf r}=Q_{A_1}\lambdae-Q_{A^{ad}_1}\lambda,\ w=U_{A_1}\lambdae-U_{A^{ad}_1}\lambda\), then 
\begin{equation}\label{eqQA1}
\begin{aligned}
&\beta^{\frac12}\|Q_{A_1}\lambda-Q_{A^{ad}_1}\lambdae\|^2_{K}
+\beta^{\frac12}\||\tau_1-\frac12\bz\cdot{\bf n}|^{\frac12}(U_{A_1}\lambdae-U_{A^{ad}_1}\lambda)\|^2_{\partial K}\\
&=(U_{A_2}\lambdae,U_{A_1}\lambdae-U_{A^{ad}_1}\lambda)_{K}.
\end{aligned}
\end{equation}
Similarly, by~\eqref{equation:AmatrixQU}, 
and \cite[Equation (2.16)]{TZAD2021} with $\epsilon=1$, $\tau_1$ replaced by $\tau_2$ and $\bz$ replaced by $-\bz$, we obtain that 
\begin{equation}\label{eqQA2}
\begin{aligned}
&\beta^{\frac12}\|Q_{A_2}\lambdae-Q_{A^{ad}_2}\mu\|^2_{K}
+\beta^{\frac12}\||\tau_2+\frac12\bz\cdot{\bf n}|^{\frac12}(U_{A_2}\lambdae-U_{A^{ad}_2}\mu)\|^2_{\partial K}\\
&=-(U_{A_1}\lambdae,U_{A_2}\lambdae-U_{A^{ad}_2}\mu)_{K}.
\end{aligned}
\end{equation}
By adding~\eqref{eqQA1} and~\eqref{eqQA2}, we have
\begin{align*}
&\beta^{\frac12}\|Q_{A_1}\lambda-Q_{A^{ad}_1}\lambda\|^2_{K}
+\beta^{\frac12}\||\tau_1-\frac12\bz\cdot{\bf n}|^{\frac12}(U_{A_1}\lambda-U_{A^{ad}_1}\lambda)\|^2_{\partial K}\\
&+\beta^{\frac12}\|Q_{A_2}\lambdae-Q_{A^{ad}_2}\mu\|^2_{K}
+\beta^{\frac12}\||\tau_2+\frac12\bz\cdot{\bf n}|^{\frac12}(U_{A_2}\lambdae-U_{A^{ad}_2}\mu)\|^2_{\partial K}\\
&=(U_{A_2}\lambdae,-U_{A^{ad}_1}\lambda)_{K}+(U_{A_1}\lambdae,U_{A^{ad}_2}\mu)_{K}
=(U_{A_2}\lambdae-U_{A^{ad}_2}\mu,-U_{A^{ad}_1}\lambda)_{K}+(U_{A_1}\lambdae-U_{A^{ad}_1}\lambda,U_{A^{ad}_2}\mu)_{K}.
\end{align*}
Therefore, we obtain that 
\begin{equation}\label{usefulQU}
\begin{aligned}
&\beta^{\frac12}\|Q_{A_1}\lambdae-Q_{A^{ad}_1}\lambda\|^2_{K}
+\beta^{\frac12}\||\tau_1-\frac12\bz\cdot{\bf n}|^{\frac12}(U_{A_1}\lambdae-U_{A^{ad}_1}\lambda)\|^2_{\partial K}\\
&+\beta^{\frac12}\|Q_{A_2}\lambdae-Q_{A^{ad}_2}\mu\|^2_{K}
+\beta^{\frac12}\||\tau_2+\frac12\bz\cdot{\bf n}|^{\frac12}(U_{A_2}\lambdae-U_{A^{ad}_2}\mu)\|^2_{\partial K}\\
&\leq \|U_{A_2}\lambdae-U_{A^{ad}_2}\mu\|_K\|U_{A^{ad}_1}\lambda\|_{K}+\|U_{A_1}\lambdae-U_{A^{ad}_1}\lambda\|_K\|U_{A^{ad}_2}\mu\|_{K}\\
&\leq \left( \|U_{A_2}\lambdae-U_{A^{ad}_2}\mu\|_K+\|U_{A_1}\lambdae-U_{A^{ad}_1}\lambda\|_K\right)\|\lambdae\|_{h,K},
\end{aligned}
\end{equation}
where the last inequality follows from ~\eqref{UA1L2} and ~\eqref{UA2L2}.

Next, to estimate $\|{\U} \lambdae\|_K$, we invoke the result from \cite[Lemma 3.3]{CDGT2014}, which states that
\begin{align}
&\|U_{A_1}\lambdae-U_{A^{ad}_1}\lambda\|_K\leq h\|{\mathcal{B}}^t(U_{A_1}\lambdae-U_{A^{ad}_1}\lambda)\|_K
+h^{\frac12}\min_{{\mathcal{E}}\subset \partial K}\|U_{A_1}\lambdae-U_{A^{ad}_1}\lambda\|_{\mathcal{E}},\label{UA1UAD1L2K}\\
&\|U_{A_2}\lambdae-U_{A^{ad}_2}\mu\|_K\leq h\|{\mathcal{B}}^t(U_{A_2}\lambdae-U_{A^{ad}_2}\mu)\|_K
+h^{\frac12}\min_{{\mathcal{E}}\subset \partial K}\|U_{A_2}\lambdae-U_{A^{ad}_2}\mu\|_{\mathcal{E}}.\label{UA2UAD2L2K}
\end{align}
Then, by~\eqref{usefulQU}, together with the identities 
${\mathcal{B}}^t(U_{A_1}\lambdae - U_{A^{ad}_1}\mu) = Q_{A_1}\lambdae - Q_{A^{ad}_1}\lambda$,  
${\mathcal{B}}^t(U_{A_2}\lambdae - U_{A^{ad}_2}\mu) = Q_{A_2}\lambdae - Q_{A^{ad}_2}\lambda$,
and~\eqref{assump3} in Assumption~\ref{eq:assump}, it is straightforward to see that
\begin{align*}
&\|U_{A_1}\lambdae - U_{A^{ad}_1}\lambda\|_K + \|U_{A_2}\lambdae - U_{A^{ad}_2}\mu\|_K \\
&\leq C h \|Q_{A_1}\lambdae - Q_{A^{ad}_1}\lambda\|_K 
+ \frac{C}{{C}_K} h^{\frac{1}{2}} \big\| |\tau_1 - \frac{1}{2} \bz \cdot \mathbf{n} \big|^{\frac{1}{2}} 
(U_{A_1}\lambdae - U_{A^{ad}_1}\lambda) \big\|_{\partial K} \\
&\quad + C h \|Q_{A_2}\lambdae - Q_{A^{ad}_2}\lambda\|_K 
+ \frac{C}{C_K} h^{\frac{1}{2}} \big\| \big|\tau_2 + \frac{1}{2} \bz \cdot \mathbf{n} \big|^{\frac{1}{2}} 
(U_{A_2}\lambdae - U_{A^{ad}_2}\lambda) \big\|_{\partial K} \\
&\leq C h^{\frac{1}{2}} \beta^{-\frac{1}{4}} 
\sqrt{ \left( \|U_{A_2}\lambdae - U_{A^{ad}_2}\mu\|_K 
+ \|U_{A_1}\lambdae - U_{A^{ad}_1}\lambda\|_K \right) \|\lambdae\|_{h,K} },
\end{align*}
which implies that
\begin{equation}\label{U12partialK}
\|U_{A_1}\lambdae - U_{A^{ad}_1}\lambda\|_K 
+ \|U_{A_2}\lambdae - U_{A^{ad}_2}\mu\|_K 
\leq C h \beta^{-\frac{1}{2}} \|\lambdae\|_{h,K},
\end{equation}
and
\begin{align*}
\|\U\lambdae\|_K 
&\leq C \left( \|U_{A_1}\lambdae\|_K + \|U_{A_2}\lambdae\|_K \right) \\
&\leq C \left( h \beta^{-\frac{1}{2}} \|\lambdae\|_{h,K} 
+ \|U_{A^{ad}_1}\lambda\|_K + \|U_{A^{ad}_2}\mu\|_K \right) \\
&\leq C (h \beta^{-\frac{1}{2}} + 1) \|\lambdae\|_{h,K}=Cc_0\|\lambdae\|_{h,K},
\end{align*}
where $c_0:=h \beta^{-\frac{1}{2}} + 1$. The last inequality follows from ~\eqref{UA1L2} and ~\eqref{UA2L2}. Therefore, we see that 
\begin{align*}
\|\U\lambdae\|_{\T_h}=\sqrt{\sum\limits_{K\in\T_h}\|\U\lambdae\|^2_K}\leq Cc_0\|\lambdae\|_h\leq Cc_0\interleave\lambdae\interleave\leq Cc_0\beta^{-\frac14}\|{\bf E}_{\C}(\lambdae)\|_{\Aa},
\end{align*}
where the second inequality follows from~Lemma~\ref{citeadlemma7.10}, the last inequality follows from~\eqref{Anormtriplrbar}.
 \end{proof}

\subsection{Proof of Lemma~\ref{UUADL2difference2}}
\begin{proof}
By~\eqref{eqQA1} and ~\eqref{eqQA2} we have 
\begin{equation}\label{usefulQU2}
\begin{aligned}
&\beta^{\frac12}\|Q_{A_1}\lambdae-Q_{A^{ad}_1}\lambda\|^2_{K}
+\beta^{\frac12}\||\tau_1-\frac12\bz\cdot{\bf n}|^{\frac12}(U_{A_1}\lambdae-U_{A^{ad}_1}\lambda)\|^2_{\partial K}\\
&+\beta^{\frac12}\|Q_{A_2}\lambdae-Q_{A^{ad}_2}\mu\|^2_{K}
+\beta^{\frac12}\||\tau_2+\frac12\bz\cdot{\bf n}|^{\frac12}(U_{A_2}\lambdae-U_{A^{ad}_2}\mu)\|^2_{\partial K}\\
&\leq \|U_{A_2}\lambdae-U_{A^{ad}_2}\mu\|_K\|U_{A_1}\lambdae\|_{K}+\|U_{A_1}\lambdae-U_{A^{ad}_1}\lambda\|_K\|U_{A_2}\lambdae\|_{K}\\
&\leq \left( \|U_{A_2}\lambdae-U_{A^{ad}_2}\mu\|_K+\|U_{A_1}\lambdae-U_{A^{ad}_1}\lambda\|_K\right)\|\U\lambdae\|_K.
\end{aligned}
\end{equation}
By~\eqref{UA1UAD1L2K}, ~\eqref{UA2UAD2L2K} and~\eqref{usefulQU2}, we have 
\begin{align*}
&\|U_{A_1}\lambdae - U_{A^{ad}_1}\lambda\|_K + \|U_{A_2}\lambdae - U_{A^{ad}_2}\mu\|_K \\
&\leq C h \|Q_{A_1}\lambdae - Q_{A^{ad}_1}\lambda\|_K 
+ \frac{C}{\widetilde{C}_K} h^{\frac{1}{2}} \big\| |\tau_1 - \frac{1}{2} \bz \cdot \mathbf{n} \big|^{\frac{1}{2}} 
(U_{A_1}\lambdae - U_{A^{ad}_1}\lambda) \big\|_{\partial K} \\
&\quad + C h \|Q_{A_2}\lambdae - Q_{A^{ad}_2}\lambda\|_K 
+ \frac{C}{\widetilde{C}_K} h^{\frac{1}{2}} \big\| \big|\tau_2 + \frac{1}{2} \bz \cdot \mathbf{n} \big|^{\frac{1}{2}} 
(U_{A_2}\lambdae - U_{A^{ad}_2}\lambda) \big\|_{\partial K} \\
&\leq C h^{\frac{1}{2}} \beta^{-\frac{1}{4}} 
\sqrt{ \left( \|U_{A_2}\lambdae - U_{A^{ad}_2}\mu\|_K 
+ \|U_{A_1}\lambdae - U_{A^{ad}_1}\lambda\|_K \right) \|{\U}\lambdae\|_{K} },
\end{align*}
which implies that 
\begin{align*}
&\|U_{A_1}\lambdae - U_{A^{ad}_1}\lambda\|_K + \|U_{A_2}\lambdae - U_{A^{ad}_2}\mu\|_K\leq Ch\beta^{-\frac12}\|{\U}\lambdae\|_{K},\\
&\beta^{\frac14}\||\tau_1-\frac12\bz\cdot{\bf n}|^{\frac12}(U_{A_1}\lambdae-U_{A^{ad}_1}\lambda)\|_{\partial K}
+\beta^{\frac14}\||\tau_2+\frac12\bz\cdot{\bf n}|^{\frac12}(U_{A_2}\lambdae-U_{A^{ad}_2}\mu)\|_{\partial K}\leq Ch^{\frac12}\beta^{-\frac14}\|\U\lambdae\|_K.
\end{align*}
Therefore, we obtain 
\begin{align}
&\|U_{A_1}\lambdae - U_{A^{ad}_1}\lambda\|_{\T_h} + \|U_{A_2}\lambdae - U_{A^{ad}_2}\mu\|_{\T_h}\leq Ch\beta^{-\frac12}\|{\U}\lambdae\|_{\T_h},\label{UUADl0}\\
&\beta^{\frac14}\||\tau_1-\frac12\bz\cdot{\bf n}|^{\frac12}(U_{A_1}\lambdae-U_{A^{ad}_1}\lambda)\|_{\partial\T_h}
+\beta^{\frac14}\||\tau_2+\frac12\bz\cdot{\bf n}|^{\frac12}(U_{A_2}\lambdae-U_{A^{ad}_2}\mu)\|_{\partial\T_h}\label{UUADl}\\
&\leq h^{\frac12}\beta^{-\frac14}\|\U\lambdae\|_{\T_h}\nonumber.
\end{align}
Moreover, we obtain that 
\begin{equation}\label{gradientUA1}
\begin{aligned}
\beta^{\frac12}\|\nabla U_{A_1}\lambdae\|_{\T_h}
&\leq \beta^{\frac12}\|\nabla U_{A_1}\lambdae-\nabla U_{A^{ad}_1}\lambda\|_{\T_h}+\beta^{\frac12}\|\nabla U_{A^{ad}_1}\lambda\|_{\T_h}\\
&\leq \beta^{\frac12}h^{-1}\|U_{A_1}\lambdae- U_{A^{ad}_1}\lambda\|_{\T_h}+\beta^{\frac12}\interleave\lambda\interleave_K\\
&\leq  \beta^{\frac12}h^{-1}(h\beta^{-\frac12})\|{\U}\lambdae\|_{\T_h}+\beta^{\frac14}\|{\bf E}_{\C}(\lambdae)\|_{{\Aa}}\\
&\leq C(1+\beta^{\frac14})\|{\bf E}_{\C}(\lambdae)\|_{\Ma},
\end{aligned}
\end{equation}
where the second inequality follows from an inverse inequality and~\eqref{gradientUestimate}, the second to last inequality follows from~\eqref{UUADl0} and~\eqref{Anormtriplrbar}, and the last inequality follows from the definition of $\|\cdot\|_{\Ma}$ in~\eqref{eq:Manorm}.
Moreover, we have 
\begin{align*}
&\beta^{\frac14}\||\tau_1-\frac12\bz\cdot{\bf n}|^{\frac12}(U_{A_1}\lambdae-\lambda)\|_{\partial\T_h}
+\beta^{\frac14}\||\tau_2+\frac12\bz\cdot{\bf n}|^{\frac12}(U_{A_2}\lambdae-\mu)\|_{\partial\T_h}\\
&\leq\beta^{\frac14}\||\tau_1-\frac12\bz\cdot{\bf n}|^{\frac12}(U_{A_1}\lambdae-U_{A^{ad}_1}\lambda)\|_{\partial\T_h}+\beta^{\frac14}\||\tau_1-\frac12\bz\cdot{\bf n}|^{\frac12}(U_{A^{ad}_1}\lambda-\lambda)\|_{\partial\T_h}\\
&\quad+\beta^{\frac14}\||\tau_2+\frac12\bz\cdot{\bf n}|^{\frac12}(U_{A_2}\lambdae-U_{A^{ad}_2}\mu)\|_{\partial\T_h}+\beta^{\frac14}\||\tau_2+\frac12\bz\cdot{\bf n}|^{\frac12}(U_{A^{ad}_2}\mu-\mu)\|_{\partial\T_h}\\
&\leq h^{\frac12}\beta^{-\frac14}\|\U\lambdae\|_{\T_h}+\beta^{\frac14}h^{\frac12}\interleave\lambdae\interleave\leq h^{\frac12}(\beta^{-\frac14}+1)\|{\bf E}_{\C}(\lambdae)\|_{\Ma},
\end{align*}
where the second to last inequality follows from~\eqref{UUADl} and~\eqref{Uminuslambdaestimate}, and the last inequality follows from ~\eqref{eq:Manorm} and ~\eqref{Anormtriplrbar}.
\end{proof}


\subsection{Proof of Lemma \ref{abilinearlessthan1}}
\begin{proof}
To prove~\eqref{less1AAd1}, notice that ${\bf E}_{\C}(\lambdae_{\A,\Gamma})-{\bf E}_{\Aad}(\lambdae_{\Aad,\Gamma})$ has {\bf 0} value on $\widetilde{\bf\Lambda}_\Gamma$, therefore 
\begin{equation}\label{EqAAhat1}
\begin{aligned}
\left({\bf E}_{\C}(\lambdae_{\A,\Gamma})-{\bf E}_{\Aad}(\lambdae_{\Aad,\Gamma})\right)^T{\Aa}{\bf E}_{\C}({\lambdae}_{\A,\Gamma})={0}.
\end{aligned}
\end{equation}
This implies 
\begin{align*}
&\|{\bf E}_{\C}(\lambdae_{\A,\Gamma})\|^2_{\Aa}={\bf E}^T_{\Aad}(\lambdae_{\Aad,\Gamma})\Aa
{\bf E}_{\C}(\lambdae_{\A,\Gamma})=
{\bf E}^T_{\Aad}(\lambdae_{\Aad,\Gamma})
\begin{bmatrix}
\beta^{\frac12} {A}^{ad}_1  & -{L}\\
{L} &\beta^{\frac12} {A}^{ad}_2
\end{bmatrix}{\bf E}_{\C}(\lambdae_{\A,\Gamma})\\
&=(Q_{A_1}\lambda_{A^{ad}_1,\Gamma},U_{A_1}\lambda_{A^{ad}_1,\Gamma},\lambda_{A^{ad}_1,\Gamma})
\begin{bmatrix}
\beta^{\frac12}A^{ad}_1& -L \end{bmatrix}{\bf E}_{\C}(\lambdae_{\A,\Gamma})\\
&\quad+(Q_{A_2}\mu_{A^{ad}_2,\Gamma},U_{A_2}\mu_{A^{ad}_2,\Gamma},\mu_{A^{ad}_2,\Gamma})
\begin{bmatrix}
L & \beta^{\frac12}A^{ad}_2\end{bmatrix}{\bf E}_{\C}(\lambdae_{\A,\Gamma}):=I+II
\end{align*}
We will estimate I and II can be estimated similarly. Since
{\begin{align*}
I&=\beta^{\frac12}({Q}_{A_1}\lambdae_{\A,\Gamma},Q_{A^{ad}_1}\lambda_{A^{ad}_1,\Gamma})_{\T_h}+\beta^{\frac12}\l(\tau_1-\frac12\bz\cdot{\bf n})(U_{A_1}\lambdae_{\A,\Gamma}-\lambda_{A,\Gamma}),U_{A_1}\lambda_{A^{ad}_1,\Gamma}-\lambda_{A^{ad}_1,\Gamma}\r_{\partial\T_h}\\
&\quad-\frac{\beta^{\frac12}}2(U_{A_1}\lambdae_{\A,\Gamma},\bz\cdot\nabla U_{A^{ad}_1}\lambda_{A^{ad}_1,\Gamma})_{\T_h}+\frac{\beta^{\frac12}}2(U_{A^{ad}_1}\lambdae_{A^{ad}_1,\Gamma},\bz\cdot\nabla U_{A_1}\lambda_{\A,\Gamma})_{\T_h}\\
&\quad+\frac{\beta^{\frac12}}2\l\bz\cdot{\bf n}\lambda_{A,\Gamma},U_{A^{ad}_1}\lambda_{A^{ad}_1,\Gamma}\r_{\partial\T_h}-\frac{\beta^{\frac12}}2
\l\bz\cdot{\bf n}(\lambda_{A^{ad}_1,\Gamma}),U_{A_1}\lambdae_{\A,\Gamma}\r_{\partial\T_h}-(U_{A_2}\lambdae_{\A,\Gamma},U_{A^{ad}_1}\lambda_{A^{ad}_1,\Gamma})_{\T_h}\\
&=\beta^{\frac12}({Q}_{A_1}\lambdae_{\A,\Gamma},Q_{A^{ad}_1}\lambda_{A^{ad}_1,\Gamma})_{\T_h}+\beta^{\frac12}\l(\tau_1-\frac12\bz\cdot{\bf n})(U_{A_1}\lambdae_{\A,\Gamma}-\lambda_{A,\Gamma}),U_{A_1}\lambda_{A^{ad}_1,\Gamma}-\lambda_{A^{ad}_1,\Gamma}\r_{\partial\T_h}\\
&\quad-\beta^{\frac12}(U_{A_1}\lambdae_{\A,\Gamma},\bz\cdot\nabla U_{A^{ad}_1}\lambda_{A^{ad}_1,\Gamma})_{\T_h}+\frac{\beta^{\frac12}}2\langle(\bz\cdot{\bf n})(U_{A^{ad}_1}\lambda_{A^{ad}_1,\Gamma}-\lambda_{A^{ad}_1,\Gamma}),U_{A_1}\lambdae_{A,\Gamma}\rangle_{\partial\T_h}\\
&\quad+\frac{\beta^{\frac12}}2\langle{\bz\cdot{\bf n}}\lambda_{A,\Gamma},U_{A^{ad}_1}\lambda_{A^{ad}_1,\Gamma}-\lambda_{A^{ad}_1,\Gamma}\rangle_{\partial\T_h}
+\frac{\beta^{\frac12}}2\langle(\bz\cdot{\bf n})\lambda_{A,\Gamma},\lambda_{A^{ad}_1,\Gamma}\rangle_{\partial\T_h}-(U_{A_2}\lambdae_{\A,\Gamma},U_{A^{ad}_1}\lambda_{A^{ad}_1,\Gamma})_{\T_h},
\end{align*}
where the second equality follows from the divergence theorem with $\nabla\cdot{\bz}=0$ such that 
$$(U_{A^{\text{ad}}_1} \lambda_{A^{\text{ad}}_1,\Gamma},\, \bz \cdot \nabla U_{A_1} \lambdae_{\A,\Gamma})_{\T_h}
= \l(\bz\cdot{\bf n})U_{A^{{ad}}_1} \lambda_{A^{{ad}}_1,\Gamma},\, U_{A_1} \lambdae_{\A,\Gamma} \r_{\partial \T_h}
- (\nabla U_{A^{{ad}}_1} \lambda_{A^{{ad}}_1,\Gamma}, U_{A_1} \lambdae_{\A,\Gamma})_{\T_h}.$$
Then, we have
\begin{align*}
|I|&\leq  \beta^{\frac14}\|{\bf E}_{\C}(\lambdae_{\A,\Gamma})\|_{\Aa}\|{\bf E}_{\Aad}(\lambdae_{\Aad,\Gamma})\|_{\Aad}+{\beta^{\frac12}}\|\nabla U_{A^{ad}_1}\lambda_{A^{ad}_1,\Gamma}\|_{\T_h}\|U_{A_1}\lambdae_{A_1,\Gamma}\|_{\T_h}\\
&\quad+C{\beta^{\frac12}}\||\bz\cdot{\bf n}|^{\frac12}(\lambda_{A^{ad}_1,\Gamma}-U_{A_1}\lambda_{A^{ad}_1,\Gamma})\|_{\partial\T_h}\big(\|U_{A_1}\lambdae_{\A,\Gamma}\|_{\partial\T_h}+\|(\bz\cdot{\bf n})\lambda_{A,\Gamma}\|_{\partial\T_h}\big)\\
&\quad+\big|\frac{\beta^{\frac12}}2\langle(\bz\cdot{\bf n})\lambda_{A,\Gamma},\lambda_{A^{ad}_1,\Gamma}\rangle_{\partial\T_h}\big|+\|U_{A_2}\lambdae_{\A,\Gamma}\|_{\T_h}\|U_{A_1}\lambda_{A^{ad}_1,\Gamma}\|_{\T_h}\\
&\leq C\beta^{\frac14}\|{\bf E}_{\C}(\lambdae_{\A,\Gamma})\|_{\Aa}\|{\bf E}_{\Aad}(\lambdae_{\Aad,\Gamma})\|_{\Aad}+C\beta^{\frac12}\|{\bf E}_{\Aad}(\lambdae_{\Aad,\Gamma})\|_{\Aad}\|U_{A_1}\lambdae_{\A,\Gamma}\|_{\T_h}\\
&\quad+C\|{\bf E}_{\Aad}(\lambdae_{\Aad,\Gamma})\|_{\Aad}\beta^{\frac12}\big(\|U_{A_1}\lambdae_{\A,\Gamma}\|_{\T_h}+h^{\frac12}\|\lambdae_{\A,\Gamma}\|_{\partial\T_h}\big)\\
&\quad+\beta^{\frac14}H\|{\bf E}_{\C}(\lambdae_{\A,\Gamma})\|_{\Aa}\|{\bf E}_{\Aad}(\lambdae_{\Aad,\Gamma})\|_{\Aad}+\|U_{A_2}\lambdae_{\A,\Gamma}\|_{\T_h}\|\lambdae_{\Aad,\Gamma}\|_h\\
&\leq C(c_0\beta^{\frac14})\|{\bf E}_{\C}(\lambdae_{\A,\Gamma})\|_{\Aa}\|{\bf E}_{\Aad}(\lambdae_{\A^{ad},\Gamma})\|_{\Aad}+C(c_0\beta^{-\frac14})\|{\bf E}_{\C}(\lambdae_{\A,\Gamma})\|_{\Aa}\|\lambdae_{\Aad,\Gamma}\|_h,
\end{align*}
where the second inequality follows from ~\eqref{gradientUestimate} with ~\eqref{ADestimateE},~\eqref{Uminuslambdaestimate}  with a trace inequality,~\eqref{UA1L2}, and the 
fact that 
\begin{equation}\label{lambdaAAd}
\bigg|\frac{\beta^{\frac12}}2\langle(\bz\cdot{\bf n})\lambda_{A,\Gamma},\lambda_{A^{ad}_1,\Gamma}\rangle_{\partial\T_h}\bigg|\leq C\beta^{\frac14}H\|{\bf E}_{\C}(\lambdae_{\A,\Gamma})\|_{\Aa}\|{\bf E}_{\Aad}(\lambdae_{\Aad,\Gamma})\|_{\Aad},
\end{equation}
where the inequality can be obtained following a similar process as~\eqref{lambdasHboundary} with Assumption~\ref{assumpconstrain},~\cite[Equation (7.49)]{TZAD2021}, ~\eqref{Anormtriplrbar} and ~\eqref{ADestimateE}.
The last inequality follows from~\eqref{Uestimate2Th}, Lemma~\ref{citeadlemma7.10} and ~\eqref{Anormtriplrbar}.} 

To prove \eqref{great1AAd}, we first consider the associated elliptic problem. Given $\lambdae=(\lambda,\mu)\in\widetilde\Lambdae$, we define 
 ${E}_{A^e}(\lambda)=(Q_{A^{e}}\lambda,U_{A^e}\lambda,\lambda)^T, {E}_{A^e}(\mu)=(Q_{A^e}\mu,U_{A^e}\mu,\mu)^T$. Here,
$(Q_{A^{e}}\lambda,U_{A^e}\lambda)$ and $(Q_{A^e}\mu,U_{A^e}\mu)$ satisfy
\begin{align*}
A^e\begin{bmatrix}Q_{A^{e}}\lambda \\ U_{A^e}\lambda\end{bmatrix}
=\begin{bmatrix} 0\\ 0\end{bmatrix},\quad 
A^e\begin{bmatrix}Q_{A^{e}}\mu \\ U_{A^e}\mu\end{bmatrix}
=\begin{bmatrix} 0\\ 0\end{bmatrix},
\end{align*}
where $A^{e}$ is the matrix  defined as \cite[Equation (2.5)]{TuWang2016HDG} with $\tau_k=\tau_1$. We then define 
$${\bf E}_{\A^e}(\lambdae)=(Q_{A^{e}}\lambda,U_{A^e}\lambda,\lambda, Q_{A^e}\mu,U_{A^e}\mu,\mu)^T$$ and the corresponding norm 
\begin{align*}
\|{\bf E}_{\A^e}(\lambdae)\|^2_{\A^{e}}=\|{E}_{A^e}(\lambda)\|^2_{A^e}+\|{E}_{A^e}(\mu)\|^2_{A^e}.
\end{align*}
Given $\lambdae_\Gamma=(\lambda_\Gamma,s_\Gamma)^T\in \widetilde{\Lambdae}_\Gamma$, let $\lambda_{A^{e},\Gamma}$ be the harmonic extension 
of $\lambda_\Gamma$. For each subdomain $\Omega_i$, let $\lambda^{(i)}_{A^e,\Gamma}$ denote the restriction of  $\lambda_{A^e,\Gamma}$ to $\Omega_i$, which satisfies
\begin{equation}\label{minharmonic}
\| E_{A^e}(\lambda^{(i)}_{A^{e},\Gamma}) \|^2_{A^{e{(i)}}} = 
\min_{\substack{\mu^{(i)} \in \Lambda^{(i)}, \mu^{(i)} = \mu^{(i)}_\Gamma \text{ on } \partial \Omega_i}} 
\| E_{A^e}(\mu^{(i)}) \|^2_{A^{e(i)}}.
\end{equation} 
Similarly, let $\mu_{A^{e},\Gamma}$ be the harmonic extension of $\mu_\Gamma$ and set $\lambdae_{\A^{e},\Gamma}= (\lambda_{A^{e},\Gamma}, \mu_{A^{e},\Gamma})^T$.
{{Following~\cite[Lemma 5.1]{TuWang2016HDG} and ~\eqref{assump1} in Assumption~\ref{eq:assump}, we have}
\begin{equation}\label{Aeinequality}
c\interleave\lambdae_{\A^e,\Gamma}\interleave\leq \|{\bf E}_{\A^e}(\lambdae_{\A^e,\Gamma})\|_{\A^e}\leq C\interleave\lambdae_{\A^e,\Gamma}\interleave,
\end{equation}
where $c$ and $C$ are two constants such that $c\leq C$.}

We have
\begin{align*}
\interleave\lambdae_{\A^{e},\Gamma}-\lambdae_{\Aad,\Gamma}\interleave^2&\leq C\|{\bf E}_{\Aad}(\lambdae_{\A^{e},\Gamma}-\lambdae_{\Aad,\Gamma})\|^2_{\Aad}\\
&={\bf E}^T_{\Aad}(\lambdae_{\A^{e},\Gamma}-\lambdae_{\Aad,\Gamma})\Aad({\bf E}_{\Aad}(\lambdae_{\A^{e},\Gamma}-\lambdae_{\Aad,\Gamma}))\\
&={\bf E}^T_{\Aad}(\lambdae_{\A^{e},\Gamma}-\lambdae_{\Aad,\Gamma})\Aad({\bf E}_{\Aad}(\lambdae_{\A^{e},\Gamma}))\\
&\leq C \interleave\lambdae_{\A^{e},\Gamma}-\lambdae_{\Aad,\Gamma}\interleave\interleave\lambdae_{\A^{e},\Gamma}\interleave,
\end{align*}
where the first follows from ~\eqref{ADestimateE}, the second equality follows from the fact that the difference $\lambdae_{\Aad,\Gamma} - \lambdae_{\A^{e},\Gamma}$ vanishes on the subdomain boundary, and the last inequality follows from \cite[Lemma 7.9]{TZAD2021} with $\epsilon=1$ and~\eqref{ADestimateE}.
Therefore, we obtain
\begin{equation}\label{AadAe}
\interleave\lambdae_{\A^{e},\Gamma}-\lambdae_{\Aad,\Gamma}\interleave\leq C\interleave\lambdae_{\A^{e},\Gamma}\interleave.
\end{equation}
Then, we have
\begin{align*}
\beta^{\frac14}\|{\bf E}_{\Aad}(\lambdae_{\Aad,\Gamma})\|_{\Aad}&\leq C\beta^{\frac14}\interleave\lambdae_{\Aad,\Gamma}\interleave\leq C\beta^{\frac14}\interleave\lambdae_{\A^e,\Gamma}\interleave\leq C\beta^{\frac14}\|{\bf E}_{\A^{e}}(\lambdae_{\A^{e},\Gamma})\|_{\A^{e}}\\
&\leq C\beta^{\frac14}\|{\bf E}_{\A^e}(\lambdae_{\A,\Gamma})\|_{\A^e}\leq C\beta^{\frac14}\interleave\lambdae_{\A,\Gamma}\interleave \leq C\|{\bf E}_{\C}(\lambdae_{\A,\Gamma})\|_{\Aa},
\end{align*}
where the first inequality follows from~\eqref{ADestimateE}, the second inequality follows from~\eqref{AadAe}, {{the third inequality follows from~\eqref{Aeinequality}, 
the third to last inequality follows from~\eqref{minharmonic}, the second to last inequality follows from~\eqref{Aeinequality},}} and the last inequality follows from~\eqref{Anormtriplrbar}.
\end{proof}
\begin{lemma}\label{UlambdaAboundary}
For any $\lambdae_{\Gamma}=(\lambda_{\Gamma},\mu_{\Gamma})^T\in \widetilde{\Lambdae}_\Gamma,$ we have 
\begin{align*}
&\|\lambdae_{\A,\Gamma}\|_{h}
\le  C(H\beta^{-\frac14})\|{\bf E}_{\C}(\lambdae_{\A,\Gamma})\|_{\Aa}+\|\lambdae_{\Aad,\Gamma}\|_{h}.
\end{align*}
\end{lemma}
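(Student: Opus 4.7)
The plan is to exploit the fact that both $\lambdae_{\A,\Gamma}$ and $\lambdae_{\Aad,\Gamma}$ are extensions of the same interface data $\lambdae_\Gamma$; in particular they agree on $\Gamma$ and on $\partial\Omega$, so their difference vanishes identically on $\partial\Omega_i$ for every subdomain index $i$. This observation will reduce the estimate for $\|\lambdae_{\A,\Gamma}\|_h$ to $\|\lambdae_{\Aad,\Gamma}\|_h$ (which already appears on the right-hand side) plus an $H$-weighted Poincar\'e-type penalty for the interior correction $\lambdae_{\A,\Gamma}-\lambdae_{\Aad,\Gamma}$.

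I would proceed in three short steps. First, apply the triangle inequality
\[
\|\lambdae_{\A,\Gamma}\|_h \le \|\lambdae_{\A,\Gamma}-\lambdae_{\Aad,\Gamma}\|_h + \|\lambdae_{\Aad,\Gamma}\|_h.
\]
Second, since $\lambdae_{\A,\Gamma}-\lambdae_{\Aad,\Gamma}$ vanishes on $\partial\Omega_i$ (and hence trivially has zero edge average on every edge of $\partial\Omega_i$), apply Lemma~\ref{citeadlemma7.8} componentwise on each subdomain and sum to obtain
\[
\|\lambdae_{\A,\Gamma}-\lambdae_{\Aad,\Gamma}\|_h \le CH\, \interleave \lambdae_{\A,\Gamma}-\lambdae_{\Aad,\Gamma}\interleave.
\]
Third, split the triple-bar norm by the triangle inequality into $\interleave\lambdae_{\A,\Gamma}\interleave + \interleave\lambdae_{\Aad,\Gamma}\interleave$, bound the first summand by \eqref{Anormtriplrbar} and the second by \eqref{ADestimateE} followed by \eqref{great1AAd}; both give $\interleave\,\cdot\,\interleave \le C\beta^{-1/4}\|{\bf E}_{\C}(\lambdae_{\A,\Gamma})\|_{\Aa}$. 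Substituting back produces $\|\lambdae_{\A,\Gamma}-\lambdae_{\Aad,\Gamma}\|_h \le CH\beta^{-1/4}\|{\bf E}_{\C}(\lambdae_{\A,\Gamma})\|_{\Aa}$, which together with the first step delivers the stated inequality.

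The work is essentially bookkeeping rather than conceptual: the only nontrivial check is that Lemma~\ref{citeadlemma7.8}, written for a scalar $\mu \in \Lambda^{(i)}$, extends without modification to the vector-valued $\lambdae=(\lambda,\mu)^T$ under the conventions $\|\lambdae\|^2_{h,D} = \|\lambda\|^2_{h,D}+\|\mu\|^2_{h,D}$ and $\interleave\lambdae\interleave^2_D = \interleave\lambda\interleave^2_D+\interleave\mu\interleave^2_D$ fixed right after~\eqref{eq:norm2}, and that the various factors of $\beta^{1/4}$ combine correctly when passing between $\|\cdot\|_{\Aa}$, $\|\cdot\|_{\Aad}$, and $\interleave\cdot\interleave$. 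I do not foresee a genuine obstacle.
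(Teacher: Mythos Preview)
Your proposal is correct and follows essentially the same approach as the paper: both apply the triangle inequality to isolate the difference $\lambdae_{\A,\Gamma}-\lambdae_{\Aad,\Gamma}$, invoke Lemma~\ref{citeadlemma7.8} (using that this difference vanishes on each $\partial\Omega_i$) to pass from $\|\cdot\|_h$ to $H\interleave\cdot\interleave$, and then bound the two pieces of the triple-bar norm via \eqref{Anormtriplrbar} and \eqref{ADestimateE}+\eqref{great1AAd}. Your presentation is in fact slightly cleaner than the paper's, which routes through $\|\cdot\|_{\partial\T_h}$ with auxiliary factors of $h^{1/2}\beta^{-1/4}$ that ultimately cancel.
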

\begin{proof}
We have
\begin{equation*}
\begin{aligned}
&\|\lambda_{A,\Gamma}\|_{h}=(h^{\frac12}\beta^{-\frac14})\beta^{\frac14}\|\lambda_{A,\Gamma}\|_{\partial\T_h}\leq (h^{\frac12}\beta^{-\frac14})\big(\beta^{\frac14}\|\lambda_{A,\Gamma}-\lambda_{A^{ad}_1,\Gamma}\|_{\partial\T_h}+
\beta^{\frac14}\|\lambda_{A^{ad}_1,\Gamma}\|_{\partial\T_h}\big)\\
&\leq (h^{\frac12}\beta^{-\frac14})\big(\beta^{\frac14}h^{-\frac12}\|\lambda_{A,\Gamma}-\lambda_{A^{ad}_1,\Gamma}\|_{h}+
\beta^{\frac14}\|\lambda_{A^{ad}_1,\Gamma}\|_{\partial\T_h}\big)\\
&\leq (h^{\frac12}\beta^{-\frac14})\big(\beta^{\frac14}h^{-\frac12}CH\interleave\lambda_{A,\Gamma}-\lambda_{A^{ad}_1,\Gamma}\interleave+\beta^{\frac14}h^{-\frac12}\|\lambda_{A^{ad}_1,\Gamma}\|_{h}\big)\\
&\leq CH\interleave\lambda_{A,\Gamma}-\lambda_{A^{ad}_1,\Gamma}\interleave+\|\lambda_{A^{ad}_1,\Gamma}\|_{h}\\
&\leq C(H\beta^{-\frac14})\|{\bf E}_{\C}(\lambdae_{\A,\Gamma})\|_{{\Aa}}+\|\lambda_{A^{ad}_1,\Gamma}\|_{h},
\end{aligned}
\end{equation*}
where the third inequality follows from Lemma~\ref{citeadlemma7.8} due to the fact that $\lambda_{A,\Gamma}-\lambda_{A^{ad}_1,\Gamma}$ has ${\bf 0}$ value on the subdomain boundary,  and the last inequality follows from a triangle inequality, ~\eqref{Anormtriplrbar},~\eqref{ADestimateE}, and
\eqref{great1AAd}. A similar argument yields
\[
\|\mu_{A,\Gamma}\|_{h} \leq C(H \beta^{-\frac{1}{4}}) \|{\bf E}_{\C}(\lambdae_{\A,\Gamma})\|_{\Aa} +  \|\mu_{A^{\text{ad}}_2,\Gamma}\|_h.
\]
This completes the proof.
\end{proof}
\subsection{Proof of Lemma \ref{Zhatestimatetilde}}
\begin{proof}
By~\eqref{sumoflocalbl} and~\eqref{lambdauBwilde}, we have
\begin{align*}
&\langle\lambdae_\Gamma,{\s}_\Gamma\rangle_{\widetilde{\bf Z}_\Gamma}=\left(-\frac{\beta^{\frac12}}2(U_{A_1}\lambdae_{\A,\Gamma},\bz\cdot\nabla U_{A_1}{\s}_{\A,\Gamma})_{\T_h}+\frac{\beta^{\frac12}}2(U_{A_1}\s_{\A,\Gamma},\bz\cdot\nabla U_{A_1}\lambdae_{\A,\Gamma})_{\T_h}\right.\\
&\quad\left.+\frac{\beta^{\frac12}}2\l\bz\cdot{\bf n}\lambda_{A,\Gamma},U_{A_1}\s_{\A,\Gamma}\r_{\partial\T_h}-\frac{\beta^{\frac12}}2
\l\bz\cdot{\bf n}s_{A,\Gamma},U_{A_1}\lambdae_{\A,\Gamma}\r_{\partial\T_h}-(U_{A_1}\s_{\A,\Gamma},U_{A_2}\lambdae_{\A,\Gamma})_{\T_h}\right)\\
&\quad+\left(\frac{\beta^{\frac12}}2(U_{A_2}\lambdae_{\A,\Gamma},\bz\cdot\nabla U_{A_2}{\s}_{\A,\Gamma})_{\T_h}-\frac{\beta^{\frac12}}2(U_{A_2}\s_{\A,\Gamma},\bz\cdot\nabla U_{A_2}\lambdae_{\A,\Gamma})_{\T_h}\right.\\
&\quad-\left.\frac{\beta^{\frac12}}2\l\bz\cdot{\bf n}\mu_{A,\Gamma},U_{A_2}\s_{\A,\Gamma}\r_{\partial\T_h}+\frac{\beta^{\frac12}}2
\l\bz\cdot{\bf n}t_{A,\Gamma},U_{A_2}\lambdae_{\A,\Gamma}\r_{\partial\T_h}+(U_{A_2}\s_{\A,\Gamma},U_{A_1}\lambdae_{\A,\Gamma})_{\T_h}\right)=I+II.
\end{align*}
We will consider the estimate of $I$ and $II$ can be estimated similarly. By the divergence theorem with $\nabla\cdot\bz=0$, we have
\[
\l(\bz\cdot{\bf n})U_{A_1}\lambdae_{\A,\Gamma},U_{A_1}\s_{\A,\Gamma}\r_{\partial\T_h}=({\bz}\cdot\nabla U_{A_1}\lambdae_{\A,\Gamma},U_{A_1}\s_{\A,\Gamma})_{\T_h}
+(U_{A_1}\lambdae_{\A,\Gamma},{\bz}\cdot\nabla U_{A_1}\s_{\A,\Gamma})_{\T_h},
\]
then, we obtain that  
\begin{equation}\label{Ifirst}
 \begin{aligned}
I&=-\frac{\beta^{\frac12}}2\l(\bz\cdot{\bf n})(U_{A_1}\lambdae_{\A,\Gamma}-\lambda_{A,\Gamma}),{U}_{A_1}\s_{\A,\Gamma}\r_{\partial\T_h}+\beta^{\frac12}(U_{A_1}\s_{\A,\Gamma},\bz\cdot\nabla U_{A_1}\lambdae_{\A,\Gamma})_{\T_h}\\
&\quad-\frac{\beta^{\frac12}}2\l(\bz\cdot{\bf n})s_{A,\Gamma},U_{A_1}\lambdae_{\A,\Gamma}-\lambda_{A,\Gamma}\r_{\partial\T_h}-(U_{A_1}\s_{\A,\Gamma},U_{A_2}\lambdae_{\A,\Gamma})_{\T_h}-\frac{\beta^{\frac12}}2\l(\bz\cdot{\bf n}) s_{A,\Gamma},\lambda_{A,\Gamma}\r_{\partial\T_h}.
\end{aligned}
\end{equation}
Therefore, we have
\begin{equation}\label{usefulI}
 \begin{aligned}
|I|&\leq Ch^{\frac12}(1+\beta^{-\frac14})\|{\bf E}_{\C}(\lambdae_{\A,\Gamma})\|_{\Ma}(\beta^{\frac14}h^{-\frac12})\|U_{A_1}\s_{\A,\Gamma}\|_{\T_h}+C(1+\beta^{\frac14})\|{\bf E}_{\C}(\lambdae_{\A,\Gamma})\|_{\Ma}\|U_{A_1}\s_{\A,\Gamma}\|_{\T_h}\\
&\quad+Ch^{\frac12}(1+\beta^{-\frac14})\|{\bf E}_{\C}(\lambdae_{\A,\Gamma})\|_{\Ma}(\beta^{\frac14}\|\s_{\A,\Gamma}\|_{\partial\T_h})+\|{\bf E}_{\C}(\lambdae_{\A,\Gamma})\|_{\Ma}\|U_{A_1}\s_{\A,\Gamma}\|_{\T_h}\\
&\quad+CH\|{\bf E}_{\C}(\lambdae_{\A,\Gamma})\|_{\Aa}\|{\bf E}_{\C}(\s_{\A,\Gamma})\|_{\Aa}\\
&\leq C\|{\bf E}_{\C}(\lambdae_{\A,\Gamma})\|_{{\Ma}}\left((1+\beta^{\frac14})\|\s_{\A,\Gamma}\|_{h}
+(1+\beta^{\frac14})\|U_{A_1}\s_{\A,\Gamma}\|_{\T_h}+H\|{\bf E}_{\C}(\s_{\A,\Gamma})\|_{\Aa}\right)\\
&\leq C(1+\beta^{\frac14})\|{\bf E}_{\C}(\lambdae_{\A,\Gamma})\|_{{\Ma}}\left(\|\s_{\A,\Gamma}\|_{h}+\|U_{A_1}\s_{\A,\Gamma}\|_{\T_h}+H\beta^{-\frac14}\|{\bf E}_{\C}(\s_{\A,\Gamma})\|_{\Aa}\right)\\
&\leq C(1+\beta^{\frac14})\|{\bf E}_{\C}(\lambdae_{\A,\Gamma})\|_{{\Ma}}\left(Cc_0\|\s_{\A,\Gamma}\|_{h}+H\beta^{-\frac14}\|{\bf E}_{\C}(\s_{\A,\Gamma})\|_{\Aa}\right)\\
&\leq C(1+\beta^{\frac14})c_0\|{\bf E}_{\C}(\lambdae_{\A,\Gamma})\|_{{\Ma}}\left(CH\beta^{-\frac14}\|{\bf E}_{\C}(\s_{\A,\Gamma})\|_{\Aa}+\|\s_{\Aad,\Gamma}\|_{h}\right)\\
&=C(1+\beta^{\frac14})c_0\|\lambdae_\Gamma\|_{\widetilde{\M}_\Gamma}\left(C(H\beta^{-\frac14})\|\s_{\Gamma}\|_{\widetilde\S_\Gamma}+\|\s_{\Aad,\Gamma}\|_{h}\right),
\end{aligned}
\end{equation}
where the first inequality follows from ~\eqref{UAlambdaMestimate}, a trace inequality,~\eqref{gradient1ThM}, ~\eqref{assump1} and~\eqref{assump4} in Assumption~\ref{eq:assump}, and~Lemma~\ref{bdulambda}, the second to last inequality follows from~\eqref{Uestimate3}, the last inequality follows from~Lemma~\ref{UlambdaAboundary}, and the last equality follows from~\eqref{MaMGamma} and ~Lemma~\ref{MSrelation}.
The same estimate holds for $\lambdae_\Gamma, \s_\Gamma \in \widehat{\Lambdae}_\Gamma$ analogously with the fact that $\l\bz\cdot{\bf n}\lambda,s\r_{\partial\T_h}=0$ and 
$\l\bz\cdot{\bf n}\mu,t\r_{\partial\T_h}=0$.
\end{proof}
\subsection{A dual argument}
Denote ${\bf W}= {W}\times{W}$,
given any  ${\bf g}=(g,z_d)^T\in {\bf W}$, let $(y^*_g,p^*_{z_d})$ be the solutions of the dual problem of  system \eqref{Osystem}, then we have 

\begin{subequations}\label{eq:sualfsystem}
 \begin{alignat}{3}
\bq^*_g+\nabla y^*_g&=0\quad&&\mbox{in}\quad\Omega,\\
 \beta^{\frac12}(\nabla\cdot\bq^*_g-\bz\cdot\nabla y^*_g){+}p^*_{z_d}&=g\quad&&\mbox{in}\quad\Omega,\\
 y^*_g&=0\quad&&\mbox{on}\quad\partial\Omega,\\
 \bp^*_{z_d}+\nabla p^*_{z_d}&=0\quad&&\mbox{in}\quad\Omega,\\
 \beta^{\frac12}(\nabla\cdot \bp^*_{z_d}+\nabla\cdot(\bz p^*_{z_d}))-y^*_{g}&=z_d\quad&&\mbox{in}\quad\Omega,\\
 p^*_{z_d}&=0\qquad &&\mbox{on}\quad\partial\Omega.
 \end{alignat}
 \end{subequations}
 The following regularity assumption holds for the dual problem(\cite[Lemma 2.2]{brenner2020multigrid}):
 \begin{align}\label{dual}
 \beta^{\frac12}\|y^*_g\|_{H^2(\Omega)}+ \beta^{\frac12}\|p^*_{z_d}\|_{H^2(\Omega)}\leq C(\|g\|_{L^2(\Omega)}+\|z_d\|_{L^2(\Omega)}).
 \end{align}
Given $\s=(s,t)^T\in\widetilde\Lambdae$, denote $\g=\U\s$. Let $\vpg=(\varphi_g,\varphi_{z_d})^T$ and $\vpgt=(\widetilde\varphi_g,\widetilde\varphi_{z_d})^T$ be the solutions of the following equations, 
\begin{align}
\mathcal{B}_h\big({\bf E}_{\C}(\lambdae),{\bf E}_{\C}(\vpg) \big)
&= (\U\lambdae, \g)_{\T_h}, \quad \forall \lambdae=(\lambda,\mu)^T \in\Lambdae, \label{eqdual1} \\
\widetilde{\mathcal{B}}\big({\bf E}_{\C}(\widetilde\lambdae),{\bf E}_{\C}(\vpgt) \big)
&= (\U\widetilde\lambdae, \g)_{\T_h}, \quad \forall \widetilde\lambdae=(\widetilde\lambda, \widetilde\mu)^T \in \widetilde\Lambdae, \label{eqdual2}
\end{align}
where $\cB_h$ is defined as~\eqref{eq:hdgconcise1} and $\widetilde\cB$ is defined as~\eqref{cB}.
 
{{By adopting a similar proof strategy as in~\cite[Lemma 7.17]{TZAD2021}, and utilizing the regularity property~\eqref{dual}, Assumption~\ref{eq:assump}, and Assumption~\ref{assumpconstrain}, 
we obtain the following result:
}}
\begin{lemma}\label{BygH2}
Given $\lambdae=(\lambda,\mu)^T\in\widetilde{\Lambdae}$, there exists a positive constant $C$ independent of $\beta,H$ and $h$, such that 
\begin{align*}
&\bigg|\widetilde{\mathcal{B}}\big({\bf E}_{\C}(\lambdae),{\bf E}_{\C}(\vpg)\big)
-(\U{\lambdae},\g)_\LO2 \bigg|\leq CH\beta^{-\frac14}\|{\bf E}_{\C}(\lambdae)\|_{\Aa}\|\g\|_{\LO2}.
\end{align*}
\end{lemma}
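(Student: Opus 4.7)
The plan is to run a duality-plus-consistency argument in the spirit of [TZAD2021, Lemma~7.17]. The crucial observation is that the discrete dual equation~\eqref{eqdual1} forces the discrepancy
$D(\lambdae):=\widetilde{\cB}({\bf E}_{\C}(\lambdae),{\bf E}_{\C}(\vpg))-(\U\lambdae,\g)_{\LO2}$
to vanish whenever $\lambdae$ lies in the fully assembled subspace $\Lambdae\subset\widetilde{\Lambdae}$, since $\widetilde{\cB}$ and $\cB_h$ coincide on $\Lambdae$. Consequently the whole estimate reduces to bounding $D$ on functions which jump only across subdomain interfaces.

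To make this operational, I would pick an averaging operator $R:\widetilde{\Lambdae}\to\Lambdae$ that preserves the primal quantities of Assumption~\ref{assumpconstrain} and split $\lambdae=R\lambdae+(\lambdae-R\lambdae)$. Linearity together with~\eqref{eqdual1} gives $D(R\lambdae)=0$, so it suffices to estimate $D(\lambdae-R\lambdae)$. Expanding $\widetilde{\cB}$ as a sum over subdomains via~\eqref{cB}, integrating by parts inside each $\Omega_i$ to move derivatives off the $\U_{A_\cdot}\vpg$ pieces onto $\lambdae-R\lambdae$, and using the adjoint-consistency structure built into~\eqref{eqdual1}, the bulk $L^2$-couplings $-(\U_{A_1}\s,\U_{A_2}\lambdae)+(\U_{A_1}\lambdae,\U_{A_2}\s)$ from ${\bf z}_h$ together with the diffusive and advective volume contributions are matched against $(\U(\lambdae-R\lambdae),\g)_{\LO2}$. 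What remains are boundary pairings concentrated on the subdomain interface edges $\mathcal{E}_{ij}\subset\Gamma$, of the schematic form $\beta^{1/2}\langle\lambdae-R\lambdae,\nabla\vpg\cdot\mathbf{n}\rangle_{\mathcal{E}_{ij}}$ and $\beta^{1/2}\langle\bz\cdot\mathbf{n}(\lambdae-R\lambdae),\vpg\rangle_{\mathcal{E}_{ij}}$.

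These residual edge contributions are exactly the ones handled by Assumption~\ref{assumpconstrain}: $\lambdae-R\lambdae$ has vanishing edge average and vanishing first moments against $\bz\cdot\mathbf{n}$ on each $\mathcal{E}_{ij}$, so I can subtract polynomial averages from the $\vpg$-side factor and apply a trace inequality exactly as in~\eqref{lambdasHboundary}, yielding $H\,\interleave\lambdae\interleave_{\Omega_i}$ on one side and $h^{1/2}\|\vpg\|_{H^2(\Omega_i)}$ on the other (after standard HDG $H^2$-stability relating $\vpg$ to the continuous dual $(y_g^*,p_{z_d}^*)$ from~\eqref{eq:sualfsystem}). The regularity bound~\eqref{dual} provides $\beta^{1/2}\|\vpg\|_{H^2(\Omega_i)}\lesssim\|\g\|_{\LO2(\Omega_i)}$, so summing over subdomains and invoking $\interleave\lambdae\interleave\leq C\beta^{-1/4}\|{\bf E}_{\C}(\lambdae)\|_{\Aa}$ from~\eqref{Anormtriplrbar} produces the claimed bound $CH\beta^{-1/4}\|{\bf E}_{\C}(\lambdae)\|_{\Aa}\|\g\|_{\LO2}$.

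The hard part is the bookkeeping around the nonsymmetric $L^2$-couplings in ${\bf z}_h$, which carry no $\beta^{1/2}$ prefactor. One has to verify that on the pair $(\lambdae-R\lambdae,\vpg)$ the contribution of those terms is genuinely absorbed by the pairing $(\U\lambdae,\g)_{\LO2}$ through the defining equation of $\vpg$, rather than leaking through to produce an $O(1)$ (non-$H$-scaling) residual; here the fact that $\vpg$ is defined against $\cB_h$ (which differs from $\widetilde{\cB}$ only through jumps of $\lambdae$ across subdomain interfaces) is what makes the cancellation work. Once this identification is made, the remaining estimates are a careful but essentially routine subdomain-interface computation tracking powers of $\beta$, $H$, and $h$, with the $\beta^{-1/4}$ factor emerging solely from the norm equivalence~\eqref{Anormtriplrbar}.
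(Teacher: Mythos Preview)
Your proposal is essentially the strategy the paper adopts: it explicitly defers to \cite[Lemma~7.17]{TZAD2021} together with the regularity bound~\eqref{dual}, Assumptions~\ref{eq:assump}--\ref{assumpconstrain}, and the norm relation~\eqref{Anormtriplrbar}, and you invoke precisely these ingredients.

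One point of imprecision worth flagging: when you say the bulk terms for $\lambdae-R\lambdae$ are matched against $(\U(\lambdae-R\lambdae),\g)$ ``using the adjoint-consistency structure built into~\eqref{eqdual1}'', note that~\eqref{eqdual1} is the \emph{discrete} dual equation, tested only against $\lambdae\in\Lambdae$; it says nothing about $\lambdae-R\lambdae\notin\Lambdae$. What actually drives the cancellation (and what \cite{TZAD2021} uses) is element-wise adjoint consistency with the \emph{continuous} dual solution~\eqref{eq:sualfsystem}: because $(y_g^*,p_{z_d}^*)$ satisfies the HDG equations on every element, pairing any $\lambdae\in\widetilde{\Lambdae}$ against it produces $(\U\lambdae,\g)$ exactly, and the interface residuals then arise from replacing the continuous dual by $\vpg$ (an $O(h)$ approximation error controlled via~\eqref{dual}) and from the jumps of $\lambdae$ across $\Gamma$. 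With this correction the rest of your outline---subtracting primal-constraint averages on each $\mathcal{E}_{ij}$, invoking~\cite[Eq.~(7.49)]{TZAD2021} and trace/Bramble--Hilbert estimates on the $\vpg$ side, and closing with~\eqref{Anormtriplrbar} to produce the $\beta^{-1/4}$---is exactly right. Introducing the averaging $R$ is a harmless detour; the paper's reference works directly with the continuous dual rather than splitting $\lambdae$, but the two routes are equivalent.
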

Using similar proofs as~\cite[Lemma 7.19]{TZAD2021}, \cite[Lemma 7.11]{TuLi:2008:BDDCAD}, we obtain the two following lemmas.
\begin{lemma}\label{Adualestimate}
Given $\g\in {\bf W}$, under the Assumption~\ref{assumpconstrain},  
we have 
$$
\big\|{\bf E}_{\C}(\vpgt-\vpg)\big\|_{\Aa}\leq C{H}{\beta^{-\frac14}}\|\g\|_{\LO2}.
$$
\end{lemma}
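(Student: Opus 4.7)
The plan is to use a consistency argument in the spirit of Strang's lemma, exploiting the nesting $\Lambdae\subset\widetilde\Lambdae$, the test-function equations defining $\vpg$ and $\vpgt$, and the already-established consistency bound in Lemma~\ref{BygH2}. Set $\eta=\vpgt-\vpg\in\widetilde\Lambdae$. The first key observation is that $\widetilde\cB({\bf E}_{\C}(\eta),{\bf E}_{\C}(\eta))=\|{\bf E}_{\C}(\eta)\|_{\Aa}^2$: on each subdomain the local nonsymmetric form $\z_h(\lambdae^{(i)},\lambdae^{(i)})$ in~\eqref{zh2} is antisymmetric in its two arguments (gradient terms pair up, trace terms pair up, and the cross coupling $-(U_{A_1}\lambdae,U_{A_2}\lambdae)+(U_{A_1}\lambdae,U_{A_2}\lambdae)$ cancels), so it vanishes; summing over subdomains and using~\eqref{cB} together with the definition~\eqref{defAanorm} leaves precisely the symmetric quadratic form $\|{\bf E}_{\C}(\eta)\|_{\Aa}^2$.

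Next, I would exploit bilinearity to decompose
\begin{equation*}
\|{\bf E}_{\C}(\eta)\|_{\Aa}^2
=\widetilde\cB\bigl({\bf E}_{\C}(\eta),{\bf E}_{\C}(\vpgt)\bigr)
-\widetilde\cB\bigl({\bf E}_{\C}(\eta),{\bf E}_{\C}(\vpg)\bigr).
\end{equation*}
Since $\eta\in\widetilde\Lambdae$ is an admissible test function in~\eqref{eqdual2}, the first term equals $(\U\eta,\g)_{\T_h}=(\U\eta,\g)_{\LO2}$. For the second term, $\vpg$ is the globally continuous dual solution, so Lemma~\ref{BygH2} applies with $\lambdae=\eta$ to give
\begin{equation*}
\bigl|\widetilde\cB\bigl({\bf E}_{\C}(\eta),{\bf E}_{\C}(\vpg)\bigr)-(\U\eta,\g)_{\LO2}\bigr|
\le CH\beta^{-\frac14}\|{\bf E}_{\C}(\eta)\|_{\Aa}\|\g\|_{\LO2}.
\end{equation*}
Combining the two gives $\|{\bf E}_{\C}(\eta)\|_{\Aa}^2\le CH\beta^{-\frac14}\|{\bf E}_{\C}(\eta)\|_{\Aa}\|\g\|_{\LO2}$, from which dividing by $\|{\bf E}_{\C}(\eta)\|_{\Aa}$ yields the claimed bound.

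The main subtlety, and the step I would check most carefully, is the vanishing of the skew-symmetric part $\widetilde\z$ on the diagonal when $\eta$ lies in the partially assembled space $\widetilde\Lambdae$ rather than in $\Lambdae$. Because the definition~\eqref{ah4} is summed subdomain by subdomain, and each local $\z_h(\lambdae^{(i)},\lambdae^{(i)})$ cancels by its internal antisymmetry without requiring any interface continuity, this vanishing persists for discontinuous test functions; no interior-face flux terms are generated, which is precisely why the approach reduces cleanly to the symmetric quadratic form $\|{\bf E}_{\C}(\eta)\|_{\Aa}^2$. Once that algebraic identity is in hand, the rest is a direct invocation of Lemma~\ref{BygH2}, so the proof will be short and entirely analogous to~\cite[Lemma 7.19]{TZAD2021} and \cite[Lemma 7.11]{TuLi:2008:BDDCAD}.
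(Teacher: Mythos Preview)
Your proposal is correct and follows precisely the standard Strang-type consistency argument that the paper cites from \cite[Lemma~7.19]{TZAD2021} and \cite[Lemma~7.11]{TuLi:2008:BDDCAD}; the paper does not give its own proof beyond that citation. Your check that the skew part $\z_h(\eta^{(i)},\eta^{(i)})$ vanishes subdomain-wise is exactly what justifies the identity $\widetilde\cB({\bf E}_{\C}(\eta),{\bf E}_{\C}(\eta))=\|{\bf E}_{\C}(\eta)\|_{\Aa}^2$, and the remaining two steps (testing \eqref{eqdual2} with $\widetilde\lambdae=\eta$ and invoking Lemma~\ref{BygH2} with $\lambdae=\eta$) are correctly applied.
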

\begin{lemma}\label{Bwlambdae0}
For any $\lambdae_\Gamma\in\widehat{\Lambdae}_\Gamma$, recall that $\w_\Gamma=\widetilde{\S}^{-1}_\Gamma\widetilde{\R}_{\D,\Gamma}\widehat{\S}_\Gamma\lambdae_\Gamma$, we have 
\[
\widetilde{\mathcal{B}}\big({\bf E}_{\C}(\w_{\A,\Gamma}-\widetilde{\R}(\lambdae_{\A,\Gamma})),{\bf E}_{\C}({\bm \xi})\big)=0,
\]
for any ${\bm\xi}\in\widetilde{\R}(\Lambdae)$.
\end{lemma}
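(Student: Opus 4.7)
The plan is to reduce the bilinear form $\widetilde{\mathcal{B}}(\cdot, {\bm\xi})$ on the partially sub-assembled space to a pairing via the partially assembled Schur complement $\widetilde{\S}_\Gamma$, and then exploit both the defining equation $\widetilde{\S}_\Gamma \w_\Gamma = \widetilde{\R}_{\D,\Gamma}\widehat{\S}_\Gamma\lambdae_\Gamma$ and the partition of unity identity $\widetilde{\R}_\Gamma^T\widetilde{\R}_{\D,\Gamma} = {\bf I}$ from~\eqref{scalingD}. The statement is purely algebraic in nature once the spaces are set up correctly, so the work lies in carefully tracking how interior and interface degrees of freedom interact.

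First, I would observe that both $\w_{\A,\Gamma}$ and $\widetilde{\R}(\lambdae_{\A,\Gamma})$ are discrete harmonic with respect to $\widetilde{\A}$ in the following sense: writing $\widetilde{\A}=\begin{bmatrix}\widetilde{\A}_{II} & \widetilde{\A}_{I\Gamma}\\ \widetilde{\A}_{\Gamma I} & \widetilde{\A}_{\Gamma\Gamma}\end{bmatrix}$, both elements satisfy $\widetilde{\A}_{II}(\cdot)_I + \widetilde{\A}_{I\Gamma}(\cdot)_\Gamma = 0$. For $\w_{\A,\Gamma}$ this is precisely how the optimal control extension is defined in~\eqref{lambdaIdef} applied to each subdomain via~\eqref{Asubdomain}; for $\widetilde{\R}(\lambdae_{\A,\Gamma})$ the same identity holds subdomain by subdomain since the restriction of the continuous interface function $\lambdae_\Gamma$ to each subdomain boundary drives the same local interior solve.

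Next, for any ${\bm\xi}=\widetilde{\R}{\bm\eta}$ with ${\bm\eta}\in\Lambdae$, I would split ${\bm\xi}$ into interior and interface parts and use the identity above to eliminate the interior block. For any $\v\in\widetilde{\Lambdae}$ satisfying $\widetilde{\A}_{II}\v_I + \widetilde{\A}_{I\Gamma}\v_\Gamma = 0$, a direct block computation gives
\begin{equation*}
\widetilde{\mathcal{B}}({\bf E}_{\C}(\v),{\bf E}_{\C}(\widetilde{\R}{\bm\eta}))
={\bm\xi}_\Gamma^T\bigl(\widetilde{\A}_{\Gamma\Gamma}-\widetilde{\A}_{\Gamma I}\widetilde{\A}_{II}^{-1}\widetilde{\A}_{I\Gamma}\bigr)\v_\Gamma
={\bm\xi}_\Gamma^T\widetilde{\S}_\Gamma \v_\Gamma,
\end{equation*}
where ${\bm\xi}_\Gamma=\widetilde{\R}_\Gamma{\bm\eta}_\Gamma$ is the interface part of $\widetilde{\R}{\bm\eta}$. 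Applying this with $\v=\w_{\A,\Gamma}$ and $\v=\widetilde{\R}(\lambdae_{\A,\Gamma})$ respectively, the claim reduces to showing
\begin{equation*}
{\bm\eta}_\Gamma^T\widetilde{\R}_\Gamma^T\widetilde{\S}_\Gamma\w_\Gamma
= {\bm\eta}_\Gamma^T\widetilde{\R}_\Gamma^T\widetilde{\S}_\Gamma\widetilde{\R}_\Gamma\lambdae_\Gamma
= {\bm\eta}_\Gamma^T\widehat{\S}_\Gamma\lambdae_\Gamma.
\end{equation*}

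Finally, I would substitute the definition $\w_\Gamma=\widetilde{\S}_\Gamma^{-1}\widetilde{\R}_{\D,\Gamma}\widehat{\S}_\Gamma\lambdae_\Gamma$ into the left-hand side, which collapses $\widetilde{\R}_\Gamma^T\widetilde{\S}_\Gamma\w_\Gamma$ to $\widetilde{\R}_\Gamma^T\widetilde{\R}_{\D,\Gamma}\widehat{\S}_\Gamma\lambdae_\Gamma$, and then invoke~\eqref{scalingD} to replace $\widetilde{\R}_\Gamma^T\widetilde{\R}_{\D,\Gamma}$ by the identity. The two sides then match, yielding the claim. The main obstacle, as far as there is one, is not analytical but notational: one has to be careful that the injection $\widetilde{\R}$ preserves the discrete harmonic property and that the Schur reduction is applied in the partially sub-assembled space rather than in $\Lambdae$.
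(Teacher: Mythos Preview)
Your proof is correct and follows the standard argument that the paper defers to via \cite[Lemma~7.11]{TuLi:2008:BDDCAD}: reduce $\widetilde{\mathcal{B}}({\bf E}_{\C}(\cdot),{\bf E}_{\C}({\bm\xi}))$ to a Schur-complement pairing using discrete harmonicity, then cancel via $\widetilde{\S}_\Gamma\w_\Gamma=\widetilde{\R}_{\D,\Gamma}\widehat{\S}_\Gamma\lambdae_\Gamma$ and the partition of unity $\widetilde{\R}_\Gamma^T\widetilde{\R}_{\D,\Gamma}={\bf I}$. The only point worth tightening is your block reduction: you should also note that $\widetilde{\R}_\Gamma^T\widetilde{\S}_\Gamma\widetilde{\R}_\Gamma=\widehat{\S}_\Gamma$ (implicit in the paper via~\eqref{Sgamma} and the definition of $\widehat{\S}_\Gamma$), which you use on the $\lambdae_\Gamma$ side without stating it.
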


\begin{lemma}\label{Ulambdaw}
Given $\lambdae_\Gamma \in \widehat{\Lambdae}_\Gamma$, recall $\w_\Gamma = \widetilde{\S}_\Gamma^{-1} \widetilde{\R}_{\D,\Gamma} \S_\Gamma \lambdae_\Gamma.$
There exists a positive constant $C$ independent of $\beta, H$ and $h$, such that
\begin{align}
\|{\U}(\w_{\A,\Gamma} - \lambdae_{\A,\Gamma})\|_{\T_h}
\leq CHc_0(1+\beta^{-\frac12})\|\w_{\Gamma}\|_{\widetilde\M_\Gamma}\label{UA1wlambdae}.
\end{align}
\end{lemma}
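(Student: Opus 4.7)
\textbf{Proposal for the proof of Lemma~\ref{Ulambdaw}.}
The plan is a duality (Aubin--Nitsche type) argument organized around the dual problem~\eqref{eqdual2} and the Galerkin-type orthogonality in Lemma~\ref{Bwlambdae0}. Set $\widetilde{\lambdae}=\w_{\A,\Gamma}-\widetilde{\R}(\lambdae_{\A,\Gamma})\in\widetilde{\Lambdae}$ and $\g=\U\widetilde{\lambdae}=\U(\w_{\A,\Gamma}-\lambdae_{\A,\Gamma})\in\W$. Since we wish to estimate $\|\g\|_{\T_h}$, the natural identity is
\begin{equation*}
\|\g\|_{\T_h}^2=(\U\widetilde{\lambdae},\g)_{\T_h}=\widetilde{\cB}\bigl({\bf E}_{\C}(\widetilde{\lambdae}),{\bf E}_{\C}(\vpgt)\bigr),
\end{equation*}
where the second equality uses the partially sub-assembled dual equation~\eqref{eqdual2}. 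This reduces the problem to bounding a single bilinear form evaluation, which is the cornerstone of the argument.

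Next I would invoke the orthogonality of Lemma~\ref{Bwlambdae0} with test function $\widetilde{\R}(\vpg)\in\widetilde{\R}(\Lambdae)$ to subtract an injected copy of the elliptic (non-sub-assembled) dual solution:
\begin{equation*}
\|\g\|_{\T_h}^2=\widetilde{\cB}\bigl({\bf E}_{\C}(\widetilde{\lambdae}),{\bf E}_{\C}(\vpgt)-{\bf E}_{\C}(\widetilde{\R}\vpg)\bigr).
\end{equation*}
I would then split $\widetilde{\cB}=\widetilde{{\bf B}}+\widetilde{{\bf Z}}$ into its symmetric and nonsymmetric parts and bound each. For the symmetric piece, Cauchy--Schwarz in the $\Aa$-norm yields a factor $\|{\bf E}_{\C}(\widetilde{\lambdae})\|_{\Aa}\,\|{\bf E}_{\C}(\vpgt-\vpg)\|_{\Aa}$, at which point Lemma~\ref{Adualestimate} provides the crucial gain $\|{\bf E}_{\C}(\vpgt-\vpg)\|_{\Aa}\le CH\beta^{-1/4}\|\g\|_{\T_h}$, i.e.\ the duality shift by the small factor $H\beta^{-1/4}$. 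For the nonsymmetric piece I would apply an estimate in the spirit of Lemma~\ref{Zhatestimatetilde}, where the $L^2$-type cross terms $(U_{A_1}\cdot,U_{A_2}\cdot)$ are controlled through Lemma~\ref{UL2K}; this is where the factor $c_0=h\beta^{-1/2}+1$ enters, combined with the $L^2$ control of the dual correction through the embedding $\|\vpgt-\vpg\|_h\le C\interleave \vpgt-\vpg\interleave \lesssim \|{\bf E}_{\Aad}(\vpgt-\vpg)\|_{\Aad}$ together with Lemma~\ref{abilinearlessthan1}.

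Finally I would bound $\|{\bf E}_{\C}(\widetilde{\lambdae})\|_{\Aa}$ (and the $\Ma$-norm appearing in the nonsymmetric term) by $\|\w_\Gamma\|_{\widetilde{\M}_\Gamma}$. Since $\widetilde{\lambdae}$ vanishes on the primal subspace $\widehat{\Lambdae}_\Pi$ by construction of BDDC, Assumption~\ref{assumpconstrain} lets me invoke Lemma~\ref{citeadlemma7.8} after passing through the advection--diffusion surrogate via~\eqref{great1AAd}; combined with Lemma~\ref{MSrelation} this yields $\|{\bf E}_{\C}(\widetilde{\lambdae})\|_{\Aa}\le C\|\w_\Gamma\|_{\widetilde{\M}_\Gamma}$. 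Collecting everything, canceling a factor $\|\g\|_{\T_h}$, and consolidating the $\beta$- and $h$-dependence into $CHc_0(1+\beta^{-1/2})$ gives~\eqref{UA1wlambdae}.

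The main obstacle is the nonsymmetric part of $\widetilde{\cB}$. In the advection--diffusion analysis of~\cite{TZAD2021} the analogue of $\widetilde{{\bf Z}}$ contains only convective terms controlled by $H$-semi-norms, but here the zeroth-order coupling $(U_{A_1}\cdot,U_{A_2}\cdot)_{\T_h}$ forces genuine $L^2$ bounds on the extensions, exactly the source of the $c_0$ factor and the reason the elliptic-projection trick from \cite[Lemma 7.20]{TZAD2021} cannot be used directly. The delicate point is to route this $L^2$ control through the advection--diffusion surrogate $\vpg_{\Aad,\Gamma}$ (where constants are preserved) rather than through $\U\vpg$, using Lemmas~\ref{UUADL2difference2} and~\ref{abilinearlessthan1} to convert between the two, so that the final combined factor matches $CHc_0(1+\beta^{-1/2})$ without accruing extra negative powers of $h$ or $\beta$.
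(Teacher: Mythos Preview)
Your duality skeleton is right, but there is a concrete gap in the last step. You assert that $\widetilde{\lambdae}=\w_{\A,\Gamma}-\widetilde{\R}(\lambdae_{\A,\Gamma})$ ``vanishes on the primal subspace $\widehat{\Lambdae}_\Pi$ by construction of BDDC'' and use this to invoke Lemma~\ref{citeadlemma7.8}. This is false: $\w_\Gamma=\widetilde{\S}_\Gamma^{-1}\widetilde{\R}_{\D,\Gamma}\widehat{\S}_\Gamma\lambdae_\Gamma$ has no reason to share primal degrees of freedom with $\lambdae_\Gamma$. Without that cancellation you can only bound $\|{\bf E}_{\C}(\widetilde{\lambdae})\|_{\Aa}=\|\w_\Gamma-\widetilde{\R}_\Gamma\lambdae_\Gamma\|_{\widetilde{\S}_\Gamma}\le\|\w_\Gamma\|_{\widetilde{\S}_\Gamma}+\|\lambdae_\Gamma\|_{\widehat{\S}_\Gamma}$, which leaves an extra $\|\lambdae_\Gamma\|_{\M_\Gamma}$ on the right-hand side and does not prove the lemma as stated. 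A second, smaller issue is the circularity in the nonsymmetric estimate: if your $\widetilde{\Z}$-bound carries the factor $\|{\bf E}_{\C}(\widetilde{\lambdae})\|_{\Ma}$, that norm already contains $\|\U\widetilde{\lambdae}\|_{\T_h}=\|\g\|_{\T_h}$, so you must absorb it under an additional smallness assumption on $H$.

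The paper avoids both problems by not keeping $\widetilde{\lambdae}$ in the first slot at all. It tests with an \emph{arbitrary} $\g=\U\s$, writes $(\U(\w_{\A,\Gamma}-\lambdae_{\A,\Gamma}),\g)$ as the difference $\widetilde{\cB}({\bf E}_{\C}(\w_{\A,\Gamma}),{\bf E}_{\C}(\vpgt))-\cB_h({\bf E}_{\C}(\lambdae_{\A,\Gamma}),{\bf E}_{\C}(\vpg))$ using the two separate dual problems~\eqref{eqdual1} and~\eqref{eqdual2}, and only then applies Lemma~\ref{Bwlambdae0} (with $\bm\xi=\vpg$) to reduce this to $\widetilde{\cB}({\bf E}_{\C}(\w_{\A,\Gamma}),{\bf E}_{\C}(\vpgt-\vpg))$. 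Now the first argument is $\w_{\A,\Gamma}$ alone, and $\|{\bf E}_{\C}(\w_{\A,\Gamma})\|_{\Ma}=\|\w_\Gamma\|_{\widetilde{\M}_\Gamma}$ by~\eqref{MaMGamma}; the bilinear-form estimate combined with Lemma~\ref{Adualestimate} then gives the stated bound directly, with no primal-cancellation claim and no bootstrap.
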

\begin{proof}
Given ${\g}\in {\W}$, we first consider the estimate of $({\U}({\w}_{\A,\Gamma}-\lambdae_{\A,\Gamma}),{\g})_{\T_h}$. Since
\begin{align*}
&({\U}({\w}_{\A,\Gamma}-\lambdae_{\A,\Gamma}),{\g})_{\T_h}
=\widetilde{\mathcal{B}}\big({\bf E}_{\C}(\w_{\A,\Gamma}),{\bf E}_{\C}(\vpgt)\big)-{\mathcal{B}}_h\big({\bf E}_{\C}(\lambdae_{\A,\Gamma}),{\bf E}_{\C}(\vpg)\big)\\
&=\widetilde{\mathcal{B}}\big({\bf E}_{\C}(\w_{\A,\Gamma}),{\bf E}_{\C}(\vpgt)\big)-\widetilde{\mathcal{B}}\big({\bf E}_{\C}(\lambdae_{\A,\Gamma}),{\bf E}_{\C}(\vpg)\big)
=\widetilde{\mathcal{B}}\big({\bf E}_{\C}(\w_{\A,\Gamma}),{\bf E}_{\C}(\vpgt-\vpg)\big),
\end{align*}
where the first inequality follows from~\eqref{eqdual1} and \eqref{eqdual2}, the last equality follows from Lemma \ref{Bwlambdae0} and the fact that 
$\widetilde{\R}$ is the injection operator from ${\Lambdae}$ into $\widetilde{\Lambdae}$ with ${\Lambdae} \subset \widetilde{\Lambdae}$. Then, we obtain that 
\begin{align*}
&({\U}({\w}_{\A,\Gamma}-\lambdae_{\A,\Gamma}),{\g})_{\T_h}=\widetilde{\mathcal{B}}\big({\bf E}_{\C}(\w_{\A,\Gamma}),{\bf E}_{\C}(\vpgt-\vpg)\big)\\
&=(Q_{A_1}\xg,U_{A_1}\xg,\widetilde\varphi_g-\varphi_g)
\begin{bmatrix}
\beta^{\frac12}A^{ad}_1& -L \end{bmatrix}{\bf E}_{\C}(\w_{\A,\Gamma})\\
&\quad+(Q_{A_2}\xg,U_{A_2}\xg,\widetilde\varphi_{z_d}-\varphi_{z_d})\begin{bmatrix}
L & A^{ad}_2\end{bmatrix}{\bf E}_{\C}(\w_{\A,\Gamma})\\
&:=I+{II}.
\end{align*}
 Next we will estimate the term ${I}$, and term ${II}$ can be obtained similarly. Denote ${\bf\xg}=\vpgt-\vpg$ and 
 assume $\w_{\A,\Gamma}=(w_{A,\Gamma},x_{A,\Gamma})^T$, we have
\begin{align*}
|{I}|&=\bigg|\beta^{\frac12}({Q}_{A_1}\w_{\A,\Gamma},Q_{A_1}\xg)_{\T_h}+\beta^{\frac12}\l(\tau_1-\frac12\bz\cdot{\bf n})(U_{A_1}\w_{\A,\Gamma}-w_{A,\Gamma}),U_{A_1}\xg-(\widetilde\varphi_g-\varphi_g)\r_{\partial\T_h}\\
&\quad-\frac{\beta^{\frac12}}2(U_{A_1}\w_{\A,\Gamma},\bz\cdot\nabla U_{A_1}\xg)_{\T_h}+\frac{\beta^{\frac12}}2(U_{A_1}\xg,\bz\cdot\nabla U_{A_1}\w_{\A,\Gamma})_{\T_h}\\
&\quad+\frac{\beta^{\frac12}}2\l\bz\cdot{\bf n}w_{A,\Gamma},U_{A_1}\xg\r_{\partial\T_h}-\frac{\beta^{\frac12}}2
\l\bz\cdot{\bf n}(\widetilde\varphi_g-\varphi_g),U_{A_1}\w_{\A,\Gamma}\r_{\partial\T_h}-(U_{A_2}\w_{\A,\Gamma},U_{A_1}\xg)_{\T_h}\bigg|\\
&=\bigg|\beta^{\frac12}({Q}_{A_1}\w_{\A,\Gamma},Q_{A_1}\xg)_{\T_h}+\beta^{\frac12}\l(\tau_1-\frac12\bz\cdot{\bf n})(U_{A_1}\w_{\A,\Gamma}-w_{A,\Gamma}),U_{A_1}\xg-(\widetilde\varphi_g-\varphi_g)\r_{\partial\T_h}\\
&\quad-\frac{\beta^{\frac12}}2\l(\bz\cdot{\bf n})(U_{A_1}\w_{\A,\Gamma}-w_{A,\Gamma}),{U}_{A_1}\xg\r_{\partial\T_h}+\beta^{\frac12}(U_{A_1}\xg,\bz\cdot\nabla U_{A_1}\w_{\A,\Gamma})_{\T_h}\\
&\quad-\frac{\beta^{\frac12}}2\l(\bz\cdot{\bf n})(\widetilde\varphi_g-\varphi_g),U_{A_1}\w_{\A,\Gamma}-w_{A,\Gamma}\r_{\partial\T_h}-(U_{A_1}\xg,U_{A_2}\w_{\A,\Gamma})_{\T_h}-\frac{\beta^{\frac12}}2\l\bz\cdot{\bf n}w_{A,\Gamma},\widetilde\varphi_g-\varphi_g\r_{\partial\T_h}\bigg|\\
&\leq \|{\bf E}_{\C}(\w_{\A,\Gamma})\|_{\Aa}\|{\bf E}_{\C}({\xg})\|_{\Aa}+C(1+\beta^{\frac14})h^{\frac12}\|{\bf E}_{\C}(\w_{\A,\Gamma})\|_{\Ma}(\beta^{\frac14}h^{-\frac12}\|U_{A_1}\xg\|_{\T_h})\\
&\quad+C(1+\beta^{\frac14})\|{\bf E}_{\C}(\w_{\A,\Gamma})\|_{\Ma}\|U_{A_1}\xg\|_{\T_h}+C(1+\beta^{\frac14})h^{\frac12}\|{\bf E}_{\C}(\w_{\A,\Gamma})\|_{\Ma}(\beta^{\frac14}\|\widetilde\varphi_g-\varphi_g\|_{\partial\T_h})\\
&\quad+\|U_{A_1}\xg\|_{\T_h}\|U_{A_2}\w_{\A,\Gamma}\|_{\T_h}+CH\|{\bf E}_{\C}(\w_{\A,\Gamma})\|_{\Ma}\|{\bf E}_{\C}(\xg)\|_{\Aa}\\
&\leq \|{\bf E}_{\C}(\w_{\A,\Gamma})\|_{\Aa}\|{\bf E}_{\C}({\xg})\|_{\Aa}\\
&\quad+C\|{\bf E}_{\C}(\w_{\A,\Gamma})\|_{\Ma}\bigg((1+\beta^{\frac14})^2\|U_{A_1}\xg\|_{\T_h}+(1+\beta^{\frac14})\beta^{\frac14}\|\widetilde\varphi_g-\varphi_g\|_h+H\|{\bf E}_{\C}({\xg})\|_{\Aa}\bigg)\\
&\leq  \|{\bf E}_{\C}(\w_{\A,\Gamma})\|_{\Aa}\|{\bf E}_{\C}({\xg})\|_{\Aa}\\
&\quad+C\|{\bf E}_{\C}(\w_{\A,\Gamma})\|_{\Ma}\bigg((1+\beta^{\frac14})^2c_0\beta^{-\frac14}\|{\bf E}_{\C}({\xg})\|_{\Aa}+(1+\beta^{\frac14})\|{\bf E}_{\C}({\xg})\|_{\Aa}+H\|{\bf E}_{\C}({\xg})\|_{\Aa}\bigg)\\
&\leq  Cc_0(\beta^{\frac14}+\beta^{-\frac14})\|{\bf E}_{\C}(\w_{\A,\Gamma})\|_{\Ma}\|{\bf E}_{\C}({\xg})\|_{\Aa}\\
&\leq Cc_0H(1+\beta^{-\frac12})\|{\bf E}_{\C}(\w_{\A,\Gamma})\|_{\Ma}\|\g\|_{\T_h},
\end{align*}
where the first equality follows from the divergence theorem with $\nabla\cdot{\bz}=0$, namely
\[
(U_{A_1}\w_{\A,\Gamma},\bz\cdot\nabla U_{A_1}\xg)_{\T_h}=\l(\bz\cdot{\bf n})U_{A_1}\w_{\A,\Gamma},U_{A_1}\xg\r_{\partial\T_h}-(\bz\cdot\nabla U_{A_1}\w_{\A,\Gamma}, U_{A_1}\xg )_{\T_h},
\]
the first inequality follows from~\eqref{UAlambdaMestimate}, a trace inequality, ~\eqref{gradient1ThM} and Lemma~\ref{bdulambda}, the second inequality follows from ~\eqref{eq:Manorm} and the fact that $\|{\bf E}_{\C}(\w_{\A,\Gamma})\|_{\Aa}\leq \|{\bf E}_{\C}(\w_{\A,\Gamma})\|_{\Ma}$, the third to last inequality follows from~\eqref{Uestimate2Th}, Lemma~\ref{citeadlemma7.10} and~\eqref{Anormtriplrbar},  
the last inequality follows from~Lemma~\ref{Adualestimate}.
Therefore, we have
\begin{align*}
\|{\U}({\w}_{\A,\Gamma}-\lambdae_{\A,\Gamma})\|_{\T_h}&=\sup\limits_{\s\in\widetilde\Lambdae}\frac{({\U}({\w}_{\A,\Gamma}-\lambdae_{\A,\Gamma}),\U\s)_\LO2}{\|\U\s\|_{\LO2}}=\sup\limits_{\s\in\widetilde\Lambdae, \g=\U\s}\frac{({\U}({\w}_{\A,\Gamma}-\lambdae_{\A,\Gamma}),\g)_\LO2}{\|\g\|_{\LO2}}\\
&\leq Cc_0H(1+\beta^{-\frac12})\|{\bf E}_{\C}(\w_{\A,\Gamma})\|_{\Ma}
=Cc_0H(1+\beta^{-\frac12})\|\w_\Gamma\|_{\widetilde\M_\Gamma},
\end{align*}
{where the last equality follows from~\eqref{MaMGamma}.}
\end{proof}

\subsection{Estimates of $\|\mathcal{H}_{\A^{\ad}}(\w_{\Gamma}-\lambdae_\Gamma)\|_h$ and $\|\mathcal{H}_{\Aad}( {\bf T}\lambdae_{\Gamma} - \lambdae_{\Gamma})\|_h$.}
Given any $\lambdae \in \widetilde{\Lambdae}$, let $\lambdae^K$ denote the restriction of $\lambdae$ on $\partial K$, where $K$ is an element in the triangulation $\mathcal{T}_h$. Let $E_i$ represent the edge of $K$ that lies opposite to the $i$-th vertex of $K$. Following~\cite[Equations (7.33)-(7.35)]{TZAD2021}, we define the local lifting operator $S_i^K \lambdae^K$ in the polynomial space $P_{k+1}(K)\times P_{k+1}(K)$ such that:
\begin{align}
\langle S_i^K \lambdae^K, {\bm\eta} \rangle_{E_i} &= \langle \lambdae^K, {\bm\eta} \rangle_{E_i}, && \forall  {\bm\eta} \in P_{k+1}(E_i)\times P_{k+1}(E_i), \label{Snorm1} \\
(S_i^K \lambdae^K, \mathbf{v})_K &= ({\U}_{\Aad}({\lambdae^K}), \mathbf{v})_K, && \forall \, \mathbf{v} \in P_k(K)\times P_k(K),\label{Snorm2} 
\end{align} 
for $i = 1, \dots, n+1$, where $n+1$ denotes the total number of edges of $K$. We then define the global inner product and corresponding norm over the mesh $\mathcal{T}_h$ by:
\begin{equation} \label{normexpansion}
(\lambdae, \s)_S = \sum_{K \in \mathcal{T}_h} \frac{1}{n+1} \sum_{i=1}^{n+1} (S_i^K \lambdae, S_i^K \s)_K, 
\quad \text{and} \quad \|\lambdae\|_J^2 = (\lambdae, \lambdae)_S.
\end{equation}
Denote ${Q}_k$ as the {{$L^2$ projection from $P_{k+1}(K)\times P_{k+1}(K)$ into $P_k(K)\times P_k(K)$}}, following similar process as~\cite[Lemma 5.7]{CDGT2014}, we have:
\begin{lemma}
\begin{align}
&{\U}_{\Aad}(\lambdae^K) = Q_k(S_i^K \lambdae^K)_K,\label{S1}\\
&c \|\lambdae\|_h \leq \|\lambdae\|_S\label{S2},\\
&|S_i^K \lambdae|_{H^1(K)} \leq C  \interleave\lambdae\interleave_K\label{S3}.
\end{align}
\end{lemma}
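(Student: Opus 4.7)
The plan is to verify the three identities \eqref{S1}--\eqref{S3} in turn using the defining relations \eqref{Snorm1}--\eqref{Snorm2} of the local lifting $S_i^K$, mirroring the approach of \cite[Lemma 5.7]{CDGT2014} adapted to the vector-valued setting $\lambdae=(\lambda,\mu)^T$.

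For \eqref{S1}, I would argue directly from the defining relations. For every $\mathbf{v}\in P_k(K)\times P_k(K)$, the $L^2$-projection characterization of $Q_k$ together with \eqref{Snorm2} yields $(Q_k S_i^K\lambdae^K,\mathbf{v})_K=(S_i^K\lambdae^K,\mathbf{v})_K=({\bf U}_{\Aad}(\lambdae^K),\mathbf{v})_K$. Since ${\bf U}_{\Aad}(\lambdae^K)$ already lies in $P_k(K)\times P_k(K)$, the two polynomials must coincide, giving \eqref{S1}.

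For \eqref{S2}, the key is to choose the test function in \eqref{Snorm1} to be $\lambdae^K|_{E_i}$ itself, which is legal because it lies in $P_k(E_i)\times P_k(E_i)\subset P_{k+1}(E_i)\times P_{k+1}(E_i)$. Cauchy--Schwarz then gives $\|\lambdae^K\|_{E_i}\le \|S_i^K\lambdae^K\|_{E_i}$, after which a standard polynomial inverse trace inequality transfers the boundary norm into the element interior, $\|S_i^K\lambdae^K\|_{E_i}\le C h^{-1/2}\|S_i^K\lambdae^K\|_K$. Squaring, summing over the $n+1$ edges of each $K$ and over all $K\in\T_h$, and absorbing the scaling factor from \eqref{eq:norm1}, delivers \eqref{S2}.

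For \eqref{S3}, I would first observe that ${\bf U}_{\Aad}$ preserves constants. Substituting $(Q,U)=(\mathbf{0},c)$ with $c$ constant into the two local HDG systems for $(Q_{A^{ad}_1},U_{A^{ad}_1})$ and $(Q_{A^{ad}_2},U_{A^{ad}_2})$, and using $\nabla\cdot\bz=0$ together with the divergence theorem, verifies every equation. Combined with \eqref{Snorm1}--\eqref{Snorm2} this forces $S_i^K c=c$ for any constant $c$, so linearity of $S_i^K$ yields $S_i^K(\lambdae^K-m_K(\lambdae))=S_i^K\lambdae^K-m_K(\lambdae)$. The seminorm $|S_i^K\lambdae|_{H^1(K)}$ is therefore unchanged if $\lambdae^K$ is replaced by $\lambdae^K-m_K(\lambdae)$. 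A polynomial inverse inequality, $|S_i^K\lambdae|_{H^1(K)}\le Ch^{-1}\|S_i^K(\lambdae-m_K(\lambdae))\|_K$, combined with a reference-element scaling bound $\|S_i^K(\lambdae-m_K(\lambdae))\|_K\le C h^{1/2}\|\lambdae-m_K(\lambdae)\|_{\partial K}$, then completes the estimate after matching the resulting geometric factor with $\interleave\lambdae\interleave_K$ from \eqref{eq:norm2}.

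The main obstacle I anticipate is the reference-element scaling step in \eqref{S3}. The operator $S_i^K$ is characterized implicitly by $n+1$ conditions distributed across the edges and the interior of $K$, so obtaining the uniform bound $\|S_i^K\eta\|_K\lesssim h^{1/2}\|\eta\|_{\partial K}$ requires mapping to a fixed reference simplex, invoking finite-dimensional norm equivalence there, and carefully tracking how the Jacobians of the trace and volume maps interact with the geometric factor $|D|/|\partial D|$ appearing in \eqref{eq:norm1}--\eqref{eq:norm2}. Once that scaling and the constant-preservation property are in hand, the rest of the argument is essentially mechanical.
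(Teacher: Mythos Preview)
Your proposal is correct and follows precisely the approach of \cite[Lemma~5.7]{CDGT2014}, which is exactly what the paper cites in lieu of giving its own proof. The only small refinement worth noting for \eqref{S3} is that the reference-element scaling bound on $S_i^K$ should be obtained by first splitting the data into the edge part $\lambdae^K|_{E_i}$ and the interior part $\U_{\Aad}(\lambdae^K)$, bounding the latter via \eqref{UA1L2}--\eqref{UA2L2} together with the constant-preservation of $\U_{\Aad}$ that you already established; once that is done, the lifting reduces to a fixed linear map on the reference simplex and the finite-dimensional norm equivalence you describe goes through without further dependence on $\bz$ or $\tau_i$.
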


Recall the definitions of bilinear forms ${\mathcal{B}}^{\ad}_h$ and $\widetilde{\mathcal{B}}^{\ad}$ in~\eqref{cadBh} and~\eqref{cBad}. Given ${\bf s}\in\widetilde{\Lambdae}$, let $\vps$  and $\vpst$ be the solutions of the following equations, 

\begin{align}
{\mathcal{B}}^{\ad}_h\big({\bf E}_{\Aad}(\lambdae),{\bf E}_{\Aad}(\vps)\big)&=({\bf U}_{\Aad}\lambdae,{\bf g})_{\Omega}\quad \forall \lambdae=(\lambda,\mu)^T \in\Lambdae,\\
\widetilde{\mathcal{B}}^{\ad}\big({\bf E}_{\Aad}(\widetilde\lambdae),{\bf E}_{\Aad}(\vpst)\big)&=({\bf U}_{\Aad}\widetilde\lambdae,{\bf g})_{\Omega}\quad \forall \lambdae=(\widetilde\lambda,\widetilde\mu)^T \in \widetilde\Lambdae,
\end{align}
where $\g$ is defined as
\begin{equation}\label{gs}
\g = \frac{1}{n+1} \sum_{i=1}^{n+1} Q_k({S}_i^K \s).
\end{equation}
Now, we are ready to provide the proof of Lemma~\ref{wdnormS}.

\subsection{Proof of Lemma~\ref{wdnormS}}

\begin{proof}
{
{
Given $\s= (s, t)^T \in \widetilde{\Lambdae}$, and defining ${\g}$ as in~\eqref{gs}, by~\eqref{normexpansion} we observe that
\begin{align*}
&(\mathcal{H}_{\Aad}( \w_{\Gamma} -\lambdae_{\Gamma}),\s)_{S}=\sum_{K \in \mathcal{T}_h} \frac{1}{n+1} \sum_{i=1}^{n+1} \big(S_i^K \mathcal{H}_{\Aad}( \w_{\Gamma} -\lambdae_{\Gamma}), S_i^K \s\big)_K,\\
&=\big(\U_{\Aad}(\mathcal{H}_{\A^{\ad}}(\w_{\Gamma}-\lambdae_\Gamma)),\g\big)_{\T_h}+ \sum_{K \in \mathcal{T}_h(\Omega)} \frac{1}{n+1} \sum_{i=1}^{n+1} 
\big( (I-{Q}_k) S_i^K ({\mathcal{H}}_{\A^{ad}}(\w_{\Gamma}-\lambdae_\Gamma)), (I-{Q}_k) S_i^K \s \big)_K.
\end{align*}
Therefore, we have 
\begin{equation}\label{HADSnorm}
\begin{aligned}
&\big| \big( \mathcal{H}_{\A^{ad}}(\w_{\Gamma} - \lambdae_\Gamma), \s \big)_{S} \big| 
\leq \big| \big( \U_{\Aad} \big( \mathcal{H}_{\A^{ad}}(\w_{\Gamma}-\lambdae_\Gamma) \big), \g \big)_{\T_h} \big| \\
&\quad + \bigg| \sum_{K \in \mathcal{T}_h(\Omega)} \frac{1}{n+1} \sum_{i=1}^{n+1} 
( (1 - Q_k) S^K_i \big( \mathcal{H}_{\A^{ad}}(\w_{\Gamma} -\lambdae_\Gamma) \big), 
(1 - Q_k) S_i^K \s \big)_K \bigg|.
\end{aligned}
\end{equation}
Denote $\mathcal{H}_{\A}(\lambdae_\Gamma)= \lambdae_{\A,\Gamma}$.
Following the similar argument as in \cite[Equation (7.58)]{TZAD2021}, we obtain
\begin{equation}\label{IQestimate}
\begin{aligned}
&\bigg| \sum_{K \in \mathcal{T}_h(\Omega)} \frac{1}{n+1} \sum_{i=1}^{n+1} 
\big( (I- Q_k) S^K_i \big( \mathcal{H}_{\A^{ad}}(\w_{\Gamma}-\lambdae_\Gamma) \big), 
(I- Q_k) S_i^K \s \big)_K \bigg| \\
&\leq h \interleave \mathcal{H}_{\A^{ad}}(\w_{\Gamma} -\lambdae_\Gamma) \interleave \|\s\|_S 
\leq C h \| {\bf E}_{\Aad} \left( \mathcal{H}_{\Aad}(\w_{\Gamma}-\lambdae_\Gamma) \right) \|_{\Aad} \|\s\|_S\\
&\leq Ch\beta^{-\frac14}\big\| {\bf E}_{\C} \big( \mathcal{H}_{\A}(\w_{\Gamma}-\lambdae_\Gamma) \big) \big\|_{\Aa} \|\s\|_S\leq Ch\beta^{-\frac14}\big(\|\w_\Gamma\|_{\widetilde{\S}_\Gamma}+\|\lambdae_\Gamma\|_{\widehat\S_\Gamma}\big) \|\s\|_S,
\end{aligned}
\end{equation}
where the first inequality follows from a Friedrichs inequality and ~\eqref{S3} with the fact that 
$$\big\|(I-Q_k)S^K_i (\mathcal{H}_{\A^{ad}}(\w_{\Gamma} -\lambdae_\Gamma))\big\|_{L^2(K)}\leq Ch\big|S^K_i \mathcal{H}_{\A^{ad}}(\w_{\Gamma} -\lambdae_\Gamma)\big|_{H^1(K)}
\leq Ch\interleave\mathcal{H}_{\A^{ad}}(\w_{\Gamma} - \lambdae_{\Gamma})\interleave_K,$$  
and $\|(I-Q_k)S^K_i\s\|_K\leq C\|S^K_i\s\|_K,$ the second inequality follows from~\eqref{ADestimateE}, the second to last inequality follows from~\eqref{great1AAd}, and the last inequality follows from Lemma~\ref{MSrelation}.

Next, we consider the estimate of $\big| \big( \U_{\Aad} \big( \mathcal{H}_{\A^{ad}}(\w_{\Gamma} -\widetilde{\R}_\Gamma \lambdae_\Gamma) \big), \g \big)_{\T_h} \big|
$. We have
\begin{equation}\label{uhnormad1}
\begin{aligned}
&\big|\left(\U_{\Aad}(\mathcal{H}_{\A^{\ad}}(\w_{\Gamma}-\lambdae_\Gamma)),\g\right)_{\T_h}\big|\\
&=\big|\big(\U_{\Aad}\big(\mathcal{H}_{\A^{\ad}}(\w_{\Gamma}-\lambdae_\Gamma)\big)-\U_{\Aad}\big(\mathcal{H}_{\A}\big(\w_{\Gamma}-\lambdae_\Gamma)\big), \g\big)_{\T_h}\\
&\quad+\big(\U_{\Aad}(\mathcal{H}_{\A}(\w_{\Gamma}-\lambdae_\Gamma))-\U\big(\mathcal{H}_{\A}(\w_{\Gamma}-\lambdae_\Gamma)\big), \g\big)_{\T_h}+\big(\U\big(\mathcal{H}_{\A}(\w_{\Gamma}-\lambdae_\Gamma)\big), \g\big)_{\T_h}\big|\\
&\leq \|\mathcal{H}_{\A^{\ad}}(\w_{\Gamma}-\lambdae_\Gamma)-\mathcal{H}_{\A}(\w_{\Gamma}-\lambdae_\Gamma)\|_h\|\g\|_{\T_h}+\|\U(\mathcal{H}_{\A}(\w_{\Gamma}-\lambdae_\Gamma))\|_{\T_h}\|\g\|_{\T_h}\\
&\quad+\big|\big(\U_{\Aad}\big(\mathcal{H}_{\A}(\w_{\Gamma}-\lambdae_\Gamma)\big)-\U\big(\mathcal{H}_{\A}(\w_{\Gamma}-\lambdae_\Gamma)\big), \g\big)_{\T_h}\big|,
\end{aligned}
\end{equation}
where the inequality follows from~\eqref{UA1L2} and~\eqref{UA2L2}.

We firstly estimate the term $\|\mathcal{H}_{\A^{\ad}}(\w_{\Gamma}-\lambdae_\Gamma)-\mathcal{H}_{\A}(\w_{\Gamma}-\lambdae_\Gamma)\|_h$. Since $\mathcal{H}_{\A^{\ad}}(\w_{\Gamma}-\lambdae_\Gamma)-\mathcal{H}_{\A}(\w_{\Gamma}-\lambdae_\Gamma)$ has ${\bf 0}$ values on the subdomain boundary, we obtain that  
\begin{equation}\label{uhnormad2}
\begin{aligned}
&\big\|\mathcal{H}_{\A^{\ad}}(\w_{\Gamma}-\lambdae_\Gamma)-\mathcal{H}_{\A}(\w_{\Gamma}-\lambdae_\Gamma)\big\|_h\\
&=\bigg(\sum\limits_{i=1}^N\|\mathcal{H}_{\A^{\ad}}(\w_{\Gamma}-\lambdae_\Gamma)-\mathcal{H}_{\A}(\w_{\Gamma}-\lambdae_\Gamma)\|^2_{h,\Omega_i}\bigg)^{\frac12}\\
&\leq\bigg(\sum\limits_{i=1}^NH^2\interleave\mathcal{H}_{\A^{\ad}}(\w_{\Gamma}-\lambdae_\Gamma)-\mathcal{H}_{\A}(\w_{\Gamma}-\lambdae_\Gamma)\interleave^2_{\Omega_i}\bigg)^{\frac12}\\
&\leq CH\interleave\mathcal{H}_{\A^{\ad}}(\w_{\Gamma}-\lambdae_\Gamma)-\mathcal{H}_{\A}(\w_{\Gamma}-\lambdae_\Gamma)\interleave\\
&\leq CH\|{\bf E}_{\Aad}(\mathcal{H}_{\A^{\ad}}(\w_{\Gamma}-\lambdae_\Gamma))\|_{\Aad}+CH\beta^{-\frac14}\|{\bf E}_{\C}(\mathcal{H}_{\A}(\w_{\Gamma}-\lambdae_\Gamma))\|_{\Aa}\\
&\leq CH\beta^{-\frac14}\|{\bf E}_{\C}(\mathcal{H}_{\A}(\w_{\Gamma}-\lambdae_\Gamma))\|_{\Aa}\\
&\leq CH\beta^{-\frac14}(\|\w_\Gamma\|_{\widetilde\S_\Gamma}+\|\lambdae_\Gamma\|_{\widehat\S_\Gamma}),
\end{aligned}
\end{equation}
where the first inequality follows from~Lemma~\ref{citeadlemma7.8}, the second inequality follows from~\eqref{ADestimateE} and~\eqref{Anormtriplrbar}, the second to last inequality follows from~\eqref{great1AAd}, and the last inequality follows from~Lemma~\ref{MSrelation}. 

Then, for the term $\big|\big(\U_{\Aad}\big(\mathcal{H}_{\A}(\w_{\Gamma}-\lambdae_\Gamma)\big)-\U\big(\mathcal{H}_{\A}(\w_{\Gamma}-\lambdae_\Gamma)\big), \g\big)_{\T_h}\big|$,  by~\eqref{UUaddifference}, we have 
\begin{equation}\label{uhnormad3}
\begin{aligned}
&\big|\big(\U_{\Aad}(\mathcal{H}_{\A}(\w_{\Gamma}-\lambdae_\Gamma))-\U\big(\mathcal{H}_{\A}(\w_{\Gamma}-\lambdae_\Gamma)), \g\big)_{\T_h}\big|\\
&\leq \big\|\U_{\Aad}\big(\mathcal{H}_{\A}(\w_{\Gamma}-\lambdae_\Gamma))-\U\big(\mathcal{H}_{\A}(\w_{\Gamma}-\lambdae_\Gamma\big)\big)\big\|_{\T_h}\|\g\|_{\T_h}\\
&\leq Ch\beta^{-\frac12}\|\U(\mathcal{H}_{\A}(\w_{\Gamma}-\lambdae_\Gamma))\|_{\T_h} \|\g\|_{\T_h}.
\end{aligned}
\end{equation}
Combining~\eqref{uhnormad1},~\eqref{uhnormad2},~\eqref{uhnormad3}, Lemma~\ref{Ulambdaw}, with the fact that $\|\g\|_{\T_h}\leq \|\s\|_S$ which is proved in~\cite[Lemma 7.20]{TZAD2021}, we have
\begin{equation}\label{UADwlambdae}
\begin{aligned}
&\big|\left(\U_{\Aad}(\mathcal{H}_{\A^{\ad}}(\w_{\Gamma}-\lambdae_\Gamma)),\g\right)_{\T_h}\big|\\
&\leq CH\beta^{-\frac14}(\|\w_\Gamma\|_{\widetilde\S_\Gamma}+\|\lambdae_\Gamma\|_{\widehat\S_\Gamma})\|\g\|_{\T_h}+\|\U\big(\mathcal{H}_{\A}(\w_{\Gamma}-\lambdae_\Gamma)\big)\|_{\T_h}\|\g\|_{\T_h}\\
&\quad+(h\beta^{-\frac12})\|\U\big(\mathcal{H}_{\A}(\w_{\Gamma}-\lambdae_\Gamma)\big)\|_{\T_h}\|\g\|_{\T_h}\\
&\leq CH\beta^{-\frac14}(\|\w_\Gamma\|_{\widetilde\S_\Gamma}+\|\lambdae_\Gamma\|_{\widehat\S_\Gamma})\|\g\|_{\T_h}+C(1+h\beta^{-\frac12})\|\U\big(\mathcal{H}_{\A}(\w_{\Gamma}-\lambdae_\Gamma)\big)\|_{\T_h}\|\g\|_{\T_h}\\
&\leq CH\beta^{-\frac14}(\|\w_\Gamma\|_{\widetilde\S_\Gamma}+\|\lambdae_\Gamma\|_{\widehat\S_\Gamma})\|\g\|_{\T_h}+C\bigg(H(1+\beta^{-\frac12})c^2_0\|{\bf w}_\Gamma\|_{\widetilde{\M}_\Gamma}\bigg)\|\g\|_{\T_h}\\
&\leq C\bigg((H\beta^{-\frac12}+H)c^2_0+H\beta^{-\frac14}\bigg)(\|\w\|_{\widetilde{\M}_\Gamma}+\|\lambdae_\Gamma\|_{\widehat\S_\Gamma})\|\s\|_{S}.
\end{aligned}
\end{equation}
Thus, by~\eqref{S2},~\eqref{HADSnorm},~\eqref{IQestimate} and ~\eqref{UADwlambdae}, we obtain that  
\begin{equation}\label{wlambdaadJ}
\begin{aligned}
\|{\mathcal{H}}_{\A^{ad}}(\w_{\Gamma}-\lambdae_\Gamma)\|_h&\leq \|{\mathcal{H}}_{\A^{ad}}(\w_{\Gamma}-\lambdae_\Gamma)\|_S=\sup\limits_{\s\in\widetilde\Lambdae}
\frac{ \left( {\mathcal{H}}_{\A^{ad}}(\w_{\Gamma} - \lambdae_\Gamma), \s \right)_{S} }{\|\s\|_{S}}\\
&\leq C\bigg((H\beta^{-\frac12}+H)c^2_0+H\beta^{-\frac14}\bigg)\big(\|\w\|_{\widetilde{\M}_\Gamma}+\|\lambdae_\Gamma\|_{\widehat\S_\Gamma}\big).
\end{aligned}
\end{equation}
Moreover, let $\widehat{\R}_{\Gamma}^{(i)}, \overline{\R}_{\Gamma}^{(i)} $ be the restriction operators 
from $\widehat{\Lambdae}_\Gamma$ and $\widetilde{\Lambdae}_\Gamma$ to $\Lambdae^{(i)}_\Gamma$ respectively. By the similar argument as in the proof of~\cite[Lemma 6.10]{TZAD2021}, we have
\begin{align*}
&\|\mathcal{H}_{\Aad}( {\bf T}\lambdae_{\Gamma} - \lambdae_{\Gamma})\|_h \\
&\leq C\bigg(\sum_{i=1}^{N} 
\big\| \mathcal{H}_{\Aad} \big( \widehat{\R}_{\Gamma}^{(i)} \widetilde{\R}_{\D,\Gamma}^{T} \w_{\Gamma} - \overline{\R}_{\Gamma}^{(i)} \w_{\Gamma} \big)\|^2_{h, \Omega_i}\bigg)^{\frac12} 
+C\bigg(\sum_{i=1}^{N}\| \mathcal{H}_{\Aad} \big( \overline{\R}_{\Gamma}^{(i)} \w_{\Gamma} - \widehat{\R}_{\Gamma}^{(i)} \lambdae_{\Gamma} \big)\|^2_{h, \Omega_i} 
\bigg)^{\frac12}\\
&:=I+II.
\end{align*}
For the estimate of $I$, since $ \widetilde{\R}_{\D,\Gamma}^{T} \w_{\Gamma}$ and $\w_{\Gamma}$ have the same edge average on the subdomain boundary $\partial\Omega_i$, then
\begin{align*}
I&\leq C\bigg(\sum\limits_{i=1}^N H^2\interleave\mathcal{H}_{\Aad} \left( \widehat{\R}_{\Gamma}^{(i)} \widetilde{\R}_{\D,\Gamma}^{T} \w_{\Gamma} - \overline{\R}_{\Gamma}^{(i)} \w_{\Gamma} \right) \interleave^2_{\T_h(\Omega_i)}\bigg)^{\frac12}\\
&\leq CH\beta^{-\frac14}\bigg(\sum\limits_{i=1}^N \|\widehat{\R}_{\Gamma}^{(i)} \widetilde{\R}_{\D,\Gamma}^{T} \w_{\Gamma} - \overline{\R}_{\Gamma}^{(i)} \w_{\Gamma}\|^2_{\widetilde\S_\Gamma}\bigg)^{\frac12}\leq CH\beta^{-\frac14}C_{ED}\|\w_\Gamma\|_{\widetilde\S_\Gamma},
\end{align*}
where the first inequality follows from~Lemma~\ref{citeadlemma7.8}, the second inequality follows from~\eqref{Anormtriplrbar} and Lemma~\ref{MSrelation}, 
and the last inequality follows from~\eqref{less1SGammav}. For the estimate of $II$, by~\eqref{wlambdaadJ} we have 
\begin{align*}
II\leq \|\mathcal{\H}_{\Aad}(\w_\Gamma-\lambdae_\Gamma)\|_h\leq
C\bigg((H\beta^{-\frac12}+H)c^2_0+H\beta^{-\frac14}\bigg)\big(\|\w\|_{\widetilde{\M}_\Gamma}+\|\lambdae_\Gamma\|_{\widehat\S_\Gamma}\big).
\end{align*} }}
Therefore, we obtain that 
\[
\|\mathcal{H}_{\Aad}( {\bf T}\lambdae_{\Gamma} - \lambdae_{\Gamma})\|_h \leq C\bigg((H\beta^{-\frac12}+H)c^2_0+H\beta^{-\frac14}C_{ED}\bigg)(\|\w\|_{\widetilde{\M}_\Gamma}+\|\lambdae_\Gamma\|_{\widehat\S_\Gamma}).
\]
\end{proof}

\subsection{Proof of Lemma~\ref{Chigamma}.}

\begin{proof}
{
We first give the proof of ~\eqref{wlambdaL1}, let $$\z_\Gamma=\widetilde{\L}^{-1}_\Gamma\widetilde{\bf R}_{\D,\Gamma}{\L}_\Gamma\lambdae_\Gamma=
\widetilde{\L}^{-1}_\Gamma\widetilde{\bf R}_{\D,\Gamma}\widetilde{\R}^T_\Gamma\widetilde{\L}_\Gamma\widetilde{\R}_\Gamma\lambdae_\Gamma
=\widetilde{\L}^{-1}_\Gamma {\bf E}^T_\D\widetilde{\L}_\Gamma\widetilde{\R}_\Gamma\lambdae_\Gamma,$$ then
\begin{align*}
\l\w_\Gamma,\z_{\Gamma}\r_{\widetilde{\L}_\Gamma}-\l\w_\Gamma,\widetilde{\R}_\Gamma\lambdae_\Gamma\r_{\widetilde{\L}_\Gamma}&=
\lambdae^T_\Gamma\widetilde{\R}^T_\Gamma\widetilde{\L}^T_\Gamma{\bf E}_\D{\bf w}_\Gamma-\l\w_\Gamma,\widetilde{\R}_\Gamma\lambdae_\Gamma\r_{\widetilde{\L}_\Gamma}=\l{\bf E}_\D{\bf w}_\Gamma,\widetilde{\R}_\Gamma\lambdae_\Gamma\r_{\widetilde{\L}_\Gamma}-\l\w_\Gamma,\widetilde{\R}_\Gamma\lambdae_\Gamma\r_{\widetilde{\L}_\Gamma}\\
&=\l{\bf E}_\D{\bf w}_\Gamma-{\bf w}_\Gamma,\widetilde{\R}_\Gamma\lambdae_\Gamma\r_{\widetilde{\L}_\Gamma}.
\end{align*}
We have
\begin{align*}
&\l{\bf E}_\D{\bf w}_\Gamma-{\bf w}_\Gamma,\widetilde{\R}_\Gamma\lambdae_\Gamma\r_{\widetilde{\L}_\Gamma}\\
&\leq 
\|\widetilde{\R}_\Gamma\lambdae_\Gamma\|_{\widetilde{\L}_\Gamma}\|{\bf E}_\D{\bf w}_\Gamma-{\bf w}_\Gamma\|_{\widetilde{\L}_\Gamma}=
\|\lambdae_\Gamma\|_{{\L}_\Gamma}\|{\bf E}_\D{\bf w}_\Gamma-{\bf w}_\Gamma\|_{\widetilde{\L}_\Gamma}=\|\lambdae_\Gamma\|_{{\L}_\Gamma}\|{\bf U}(\mathcal{H}_{\A}({\bf E}_\D{\bf w}_\Gamma-{\bf w}_\Gamma))\|_{\T_h}\\
&\leq \|\lambdae_\Gamma\|_{{\L}_\Gamma}\left(c_0\|\mathcal{H}_{\A}({\bf E}_\D{\bf w}_\Gamma-{\bf w}_\Gamma)\|_{h}\right)\leq  C\|\lambdae_\Gamma\|_{{\L}_\Gamma}\left(c_0 H \interleave\mathcal{H}_{\A,\Gamma}({\bf E}_{\D}{\bf w}_\Gamma-{\bf w}_\Gamma)\interleave\right)\\
&\leq C\|\lambdae_\Gamma\|_{{\L}_\Gamma}(c_0 H\beta^{-\frac14}\|{\bf E}_{\D}{\bf w}_\Gamma-{\bf w}_\Gamma\|_{\widetilde{\S}_\Gamma})\leq Cc_0 H\beta^{-\frac14}C_{ED}
\|\lambdae_\Gamma\|_{{\L}_\Gamma}\|{\bf w}_\Gamma\|_{\widetilde{\S}_\Gamma}\\
&= Cc_1\|\lambdae_\Gamma\|_{{\L}_\Gamma}\|{\bf w}_\Gamma\|_{\widetilde{\S}_\Gamma}\leq Cc_1\|\lambdae_\Gamma\|_{{\M}_\Gamma}\|{\bf w}_\Gamma\|_{\widetilde{\M}_\Gamma},
\end{align*}
where $c_1:=H\beta^{-\frac14}C_{ED}$,
the second inequality follows from~\eqref{Uestimate3} in Lemma~\ref{UL2K}, the third inequality follows from Lemma \ref{citeadlemma7.10}, the fourth inequality follows from~\eqref{Anormtriplrbar}, the second to last inequality follows from~\eqref{CED}, and the last inequality follows from~Lemma~\ref{MSrelation}.}

Next, we give the proof of \eqref{wlambdaL4}. We have
\begin{equation*}
\begin{aligned}
&\langle \lambdae_\Gamma, {\bf T}\lambdae_{\Gamma} - \lambdae_{\Gamma} \rangle_{\Z_\Gamma} \\
&\leq C\gamma_1\|\lambdae_\Gamma\|_{{\M}_\Gamma}\left(C(H\beta^{-\frac14})\|{\bf T}\lambdae_\Gamma - \lambdae_\Gamma \|_{\widehat\S_\Gamma}+\|\H_{\A^{ad}}({\bf T}\lambdae_\Gamma - \lambdae_\Gamma) \|_{h}\right)\\
&\leq CH\beta^{-\frac14}\gamma_1\|\lambdae_\Gamma\|_{{\M}_\Gamma}\|{\bf T}\lambdae_\Gamma - \lambdae_\Gamma \|_{\widehat\S_\Gamma}+C\gamma_1\|\lambdae_\Gamma\|_{{\M}_\Gamma}\|\H_{\A^{ad}}({\bf T}\lambdae_\Gamma - \lambdae_\Gamma) \|_{h}\\
&\leq CH\beta^{-\frac14}\gamma_1\|\lambdae_\Gamma\|_{{\M}_\Gamma}\big(C_{ED}\|{\w}_\Gamma\|_{\widetilde{\S}_{\Gamma}}+\|\lambdae_\Gamma\|_{\widehat\S_\Gamma}\big)+C\gamma_1\|\lambdae_\Gamma\|_{{\M}_\Gamma}
\big( \alpha_2 (\|\w_\Gamma\|_{\widetilde\M_\Gamma}+\|\lambdae_\Gamma\|_{\wS_\Gamma})\big) \\
&\leq C c_2\|\lambdae\|_{\M_\Gamma} (\|\w_\Gamma\|_{\widetilde\M_\Gamma}+\|\lambdae_\Gamma\|_{\M_\Gamma})
\end{aligned}
\end{equation*}
where $c_2:=\gamma_1\alpha_2$, the first inequality follows from~Lemma~\ref{Zhatestimatetilde}, 
the second to last inequality follows from~\eqref{TAdnormSw2}, a triangle inequality with~\eqref{CED} and the fact that 
\begin{align*}
\|{\bf T}\lambdae_\Gamma\|_{\widehat\S_\Gamma}=\langle {\bf T}\lambdae_{\Gamma}, {\bf T}\lambdae_{\Gamma} \rangle^{\frac12}_{\widehat\S_{\Gamma}} &= \langle \widetilde{\R}^T_{\D,\Gamma} \widetilde{\S}_{\Gamma}^{-1} \widetilde{\R}_{\D,\Gamma} \S_{\Gamma} \lambdae_{\Gamma}, \widetilde{\R}^T_{\D,\Gamma} \widetilde{\S}_{\Gamma}^{-1} \widetilde{\R}_{\D,\Gamma} \S_{\Gamma} \lambdae_{\Gamma} \rangle^{\frac12}_{{\widehat{\S}}_{\Gamma}} \\
&= \langle \E_{\D}\w_{\Gamma}, \E_{\D}\w_{\Gamma} \rangle^{\frac12}_{\widetilde{\S}_{\Gamma}}= \|\E_{\D}\w_{\Gamma}\|_{\widetilde{\S}_{\Gamma}}\leq CC_{ED}\|{\w}_\Gamma\|_{\widetilde{\S}_{\Gamma}},
\end{align*}
and the last inequality follows from the definition of $\alpha_2$ and {{Lemma \ref{MSrelation}}}.

{{Then, \eqref{wlambdaL2} can be obtained by Lemma~\ref{Ulambdaw} with the fact that 
\begin{align*}
\|\w_\Gamma - \widetilde{\R}_\Gamma \lambdae_\Gamma\|_{\widetilde{\bf L}_\Gamma}=\|\U(\w_{\A,\Gamma}- \widetilde{\R} \lambdae_{\A,\Gamma})\|_{\T_h}\leq Cc_3\|\w_{\Gamma}\|_{\widetilde\M_\Gamma},
\end{align*}  
where $c_3:=c_0H(1+\beta^{-\frac12}).$}}

Finally, we give the proof of~\eqref{wlambdaL3}, since
\begin{align*}
&\left|\langle \w_\Gamma, \widetilde{\R}_\Gamma \lambdae_\Gamma - \w_\Gamma \rangle_{\widetilde{\Z}_\Gamma} \right| \\
&\leq C\gamma_1\|\w_\Gamma\|_{\widetilde{\M}_\Gamma}\left(C(H\beta^{-\frac14})\| \widetilde{\R}_\Gamma \lambdae_\Gamma - \w_\Gamma \|_{\widetilde\S_\Gamma}+\|\H_{\A^{ad}}(\w_\Gamma - \widetilde{\R}_\Gamma \lambdae_\Gamma) \|_{h}\right)\\
&\leq CH\beta^{-\frac14}\gamma_1\|\w_\Gamma\|_{\widetilde{\M}_\Gamma}\big(\|\lambdae_\Gamma\|_{\widehat\S_\Gamma}+\|\w_\Gamma\|_{\widetilde\S_\Gamma}\big)+
C{{ \gamma_1 \alpha_1 \|\w\|_{\widetilde{\M}_\Gamma}\big( \|\w_\Gamma\|_{\widetilde{\M}_\Gamma}+ \|\lambdae\|_{\wS_\Gamma} \big) }}\\
&\leq Cc_4\|\w\|_{\widetilde{\M}_\Gamma}(\|\w\|_{\widetilde{\M}_\Gamma}+\|\lambdae_{\Gamma}\|_{\M_\Gamma}),
\end{align*}
where $c_4:=\gamma_1 \alpha_1$,
the first inequality follows from~~Lemma~\ref{Zhatestimatetilde}, the second inequality follows from~\eqref{TAdnormSw1}, and the last inequality follows from~Lemma~\ref{MSrelation}.
\end{proof}
\section{Numerical Experiments}\label{sec:numerics}
We decompose the domain $\Omega=[0,1]\times[0,1]$ into nonoverlapping subdomains and each subdomain into triangles. Using the discretization system \eqref{eq:hdg2} with HDG polynomial degrees $k=1,2$, we consider two velocity fields ${\bm \zeta}=(1,0)$ and ${\bm \zeta}=(x_2,-x_1)$. 
For each case, the regularization parameter $\beta$ is set to $1,10^{-4},10^{-6}, 10^{-8}.$
The right-hand side functions $f$ and $g$ are computed based on the exact solution $(y,p)$ as $${y}=\sin^3(\pi x_1) \sin^2(\pi x_2) \cos(\pi x_2),\quad {p}=-\sin^2(\pi x_1)\sin^2(\pi x_2)\cos(\pi x_1).$$
The stabilization parameters are chosen as $\tau_1=\max(\sup\limits_{{x\in\mathcal{E}}}({\bm\zeta}\cdot{\bf n}),0)+1, \tau_2=\tau_1-\bm{\zeta}\cdot\bn, \forall \mathcal{E}\subset\partial K, \forall K\in\mathcal{T}_h$. These choices satisfy Assumption \ref{eq:assump}.
We also employ the edge average constraints, the edge flux weighted average constraints, and the edge flux weighted first moment constraints from Assumption \ref{assumpconstrain}. We use the GMRES method without restart, and the iteration is stopped when the residual is reduced by $10^{-11}$.

{{We report our numerical results in Table~\ref{tab:table1} and Table~\ref{tab:table2}. In Table~\ref{tab:table1}, we fix the subdomain problem size and vary the number of subdomains.  
When \( k = 1 \), the basis functions are linear polynomials. With a fixed value of $\beta$, we observe that the number of iterations is independent of the number of subdomains. Moreover, the iteration count is robust with respect to changes in $\beta$.  
When \( k = 2 \), the basis functions are quadratic polynomials. In this case, the number of iterations is higher compared to the $ k = 1$ case. Nevertheless, the iteration count continues to be independent of the number of subdomains and exhibits good scalability, even for small values of $\beta$.}} 

{{In Table~\ref{tab:table2}, we fix the number of subdomains and vary the subdomain problem size. We observe that the iteration count increases slightly as \( H/h \) increases. This behavior is consistent with those results in symmetric positive definite problems~\cite{Tu2007BDDC, TuWang2016HDG, TWZstokes2020}.}}
More numerical results can be found in \cite{liuzhang2025}.

\begin{table}[htbp]
  \caption{Iteration counts for varying number of subdomains with fixed subdomain problem size \( H/h = 6 \)} 
  \label{tab:table1}
  \centering
  \scalebox{1}{
  \begin{tabular}{|c||c|c|c|c||c|c|c|c||}
    \hline
    \# of Sub. & $4^2$ & $8^2$ & $16^2$ & $32^2$ & $4^2$ & $8^2$ & $16^2$ & $32^2$ \\
    \hline \hline 
    \multicolumn{1}{|c||}{$\beta$ (Test I)} & \multicolumn{4}{c||}{$k=1$} & \multicolumn{4}{c||}{$k=2$} \\
    \hline
    $1$  & 19 & 21 & 18 & 17 & 24 & 27 & 23 & 22  \\
    $10^{-4}$ & 25 & 21 & 18 & 16 & 27 & 21 & 22 & 26  \\
    $10^{-6}$ & 17 & 21 & 18 & 15 & 22 & 22 & 18 & 21  \\
    $10^{-8}$ & 8  & 11 & 15 & 18 & 10 & 15 & 18 & 21  \\
    \hline
    \multicolumn{1}{|c||}{$\beta$ (Test II)} & \multicolumn{4}{c||}{$k=1$} & \multicolumn{4}{c||}{$k=2$} \\
    \hline
    $1$  & 7 & 6 & 6 & 5 & 11 & 10 & 10 & 12 \\
    $10^{-4}$ & 7 & 6 & 6 & 5 & 10 & 10 & 10 & 10 \\
    $10^{-6}$ & 7 & 6 & 6 & 5 & 10 & 9  & 9  & 9  \\
    $10^{-8}$ & 6 & 6 & 6 & 5 & 6  & 8  & 8  & 8  \\
    \hline
  \end{tabular}
  }
\end{table}

\begin{table}[h!]
\captionof{table}{ Iteration counts for changing subdomain problem size with $6\times 6$ subdomains}
\label{tab:table2} 
\scalebox{1}{
\begin{tabular}{|c||c|c|c|c||c|c|c|c||}
\hline
$H/h$ & $4$ & $8$ & $16$ & $20$  & $4$ & $8$ & $16$ & $20$\\
\hline \hline 
 \multicolumn{1}{|c||}{$\beta$ (Test I)}& \multicolumn{4}{c||}{$k=1$} & \multicolumn{4}{c||}{$k=2$}   \\
\hline
$1$    &18  & 23  &28   &29  &23  &29  &32   &33 \\
$10^{-4}$   &18  &22   &27   &29  &24  &28  &32   &33\\   
$10^{-6}$   &16  &21   &25   &27  &22  &23  &25   &26 \\ 
$10^{-8}$   &8   &11    &14   &15  & 11   &16  &18  &19  \\ 
\hline
$\beta$(Test II)& \multicolumn{4}{c||}{$k=1$} & \multicolumn{4}{c||}{$k=2$}   \\
\hline
$1$  &5   & 7  &9   &9 &10  &11  &10   &10 \\
$10^{-4}$ &5   &7   &9  &9  &11  &9  &10   &14\\   
$10^{-6}$ &6   &7   &9    &9      &10    &9   &8    &8 \\ 
$10^{-8}$ &5   &7   & 9   &9   &   6 & 7    &7    &7 \\ 
\hline
\end{tabular}
}
\end{table}

\section{Concluding Remarks}\label{sec:conclude}

In this work, we perform a thorough analysis of the BDDC preconditioner applied to the HDG discretization of an elliptic optimal control problem constrained by a advection-diffusion equation. We prove that the BDDC preconditioner is robust with respect to $\beta$, provided $H$ is sufficiently small. The analysis in this work can be easily extended to the case where the state equation is more general, for example, a non-divergence-free advection field. The results in this work could potentially be extended to the case of a advection-dominated state equation (cf. \cite{liu2024multigrid,TZAD2021}), or to problems with control or state constraints (cf. \cite{brenner2023multigrid,brenner2021p1,boyana2025convergence,engel2011multigrid}), where the BDDC preconditioner can be applied to the subsystem that must be solved during the outer iterative method.

\section*{Acknowledgment}
The authors would like to thank Prof. Xuemin Tu for the helpful discussion regarding this project. This material is based upon work supported by the National Science Foundation under Grant No. DMS-1929284 while SL was in residence at the Institute for Computational and Experimental Research in Mathematics in Providence, RI, during the Numerical PDEs: Analysis, Algorithms, and Data Challenges semester program.

\bibliographystyle{plain}
\bibliography{references}

\end{document}